\newtheorem{theorem}{Theorem}[section]
\newtheorem{lemma}[theorem]{Lemma}
\newtheorem{cor}[theorem]{Corollary}
\newtheorem{prop}[theorem]{Proposition}
\newtheorem{conj}[theorem]{Conjecture}
\newtheorem{remark}[theorem]{Remark}
\newtheorem{example}[theorem]{Example}
\newtheorem{assumption}[theorem]{Assumption}
\newtheorem{definition}[theorem]{Definition}
\numberwithin{equation}{section}
\def\II{{\rm II}}
\def\tr{{\rm tr}}
\def\R{\mathbf{R}}
\def\B{\mathbf{B}}
\def\S{\mathbf{S}}
\def\H{\mathbf{H}}
\def\Bbb{\mathbb{B}}
\def\zerob{\mathbf{0}}
\def\Ucal{\mathcal{U}}
\def\Hcal{\mathcal{H}}
\def\Acal{\mathcal{A}}
\def\Ccal{\mathcal{C}}
\def\Bcal{\mathcal{B}}
\def\Scal{\mathcal{S}}
\def\Ecal{\mathcal{E}}
\def\Tcal{\mathcal{T}}
\def\Bscr{\mathscr{B}}
\def\xb{{\boldsymbol{x}}}
\def\ed{{\rm d}}
\def\area{{\rm area}}
\def\length{{\rm length}}
\def\vol{{\rm vol}}
\def\tr{{\rm tr}}
\def\div{{\rm div}}
\def\sec{{\rm sec}}
\def\dist{{\rm dist}}
\def\loc{{\rm loc}}
\def\cmc{\mu}
\def\<{\langle}
\def\>{\rangle}
\def\hemiN{\S^n_+}
\def\hemiT{\S^3_+}
\def\setdiff{\backslash}
\title{Rigidity of 3D spherical caps via $\mu$-bubbles}
\author{Yuhao Hu}
\address{Key Laboratory of Pure and Applied Mathematics, 
School of Mathematical Sciences, Peking University, Beijing, 100871, P. R. China
}
\email{yuhao.hu@math.pku.edu.cn}
\author{Peng Liu}
\address{Key Laboratory of Pure and Applied Mathematics, 
School of Mathematical Sciences, Peking University, Beijing, 100871, P. R. China
}
\email{1801110011@pku.edu.cn}
\author{Yuguang Shi}
\address{Key Laboratory of Pure and Applied Mathematics, 
School of Mathematical Sciences, Peking University, Beijing, 100871, P. R. China
}
\email{ygshi@math.pku.edu.cn}
\begin{document}
\maketitle

\begin{abstract}
	By using Gromov's $\mu$-bubble technique, we show that the
	$3$-dimensional spherical caps are rigid under perturbations that 
	do not reduce the metric, the scalar curvature, and the mean curvature along its boundary.
	Several generalizations of this result will be discussed. 
\end{abstract}


\section{Introduction}

In recent decades, a lot of progress has been made toward understanding 
the scalar curvature of a Riemannian manifold (see \cite{Gromov21Four} and its references). A particular medium for gaining such understanding 
is to answer whether one can perturb the metric of a `model space' in certain ways without reducing its scalar curvature. This viewpoint was famously represented by the positive mass theorem
 and its various generalizations and analogues. 
One analogue, which 
motivated the current work, is the following conjecture proposed by Min-Oo around 1995  (cf. \cite[Theorem 4]{MinOo95}). 

\begin{conj}\label{minooconj} {\rm (Min-Oo)}
Suppose that $g$ is a smooth Riemannian metric on the (topological) hemisphere $\hemiN$ $(n\ge 3)$ with the properties:
\begin{enumerate}[\qquad\rm (1)]
    \item the scalar curvature $R_g$ satisfies $R_g \ge n(n-1)$ on $\hemiN$;  \label{MinOo1}
    \item the boundary $\partial \hemiN$ is totally geodesic with respect to $g$; \label{MinOo2}
    \item the induced metric on $\partial\hemiN$ agrees with the standard metric on $\S^{n-1}$.\label{MinOo3}
\end{enumerate}
Then $g$ is isometric to the standard metric on $\hemiN$.
\end{conj}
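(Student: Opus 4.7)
The plan is to attack Conjecture \ref{minooconj} via a Gromov-style $\mu$-bubble sweepout of $(\hemiN, g)$, seeded at $\partial\hemiN$ and propagated inward. Hypotheses \ref{MinOo2} and \ref{MinOo3} pin down the geometry of the boundary exactly as in the round model, so the idea is that a one-parameter family of stable $\mu$-bubbles should carry this boundary rigidity into the interior, with hypothesis \ref{MinOo1} doing the work through the stability inequality.

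First I would introduce, for each $t \in [0,\pi/2)$, the prescribed-mean-curvature functional
\[
\Ecal_t(\Sigma) = \Acal(\Sigma) - \int_{\Omega_\Sigma} h_t \dvol,
\]
where $h_t$ is tailored so that in the round model the critical points are precisely the distance-$t$ spheres from the equator (in particular $h_t \equiv (n-1)\tan t$ there). Minimizing $\Ecal_t$ between $\partial\hemiN$ and a small barrier near the would-be pole should produce a stable $\mu$-bubble $\Sigma_t$, and a sweepout argument would assemble $\{\Sigma_t\}$ into a smooth foliation of $\hemiN$, with $\Sigma_0 = \partial\hemiN$.

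Next, the second variation of $\Ecal_t$ combined with the Gauss equation and $R_g \ge n(n-1)$ converts the stability of $\Sigma_t$ into an integrated lower bound on the intrinsic scalar curvature of the leaf of the form
\[
\int_{\Sigma_t} |\nabla\varphi|^2 \;\ge\; \int_{\Sigma_t} \bigl(\Ric_g(\nub,\nub) + |A|^2 + \langle \nabla h_t, \nub\rangle\bigr)\varphi^2.
\]
Comparing with the model leaf (via Gauss-Bonnet in dimension three, or more involved spectral arguments for higher $n$) should force this inequality to saturate. Tracking the equality case would then force $\Sigma_t$ to be totally umbilic with the model second fundamental form, $\Ric_g(\nub,\nub)$ to equal $n-1$, and the induced metric on $\Sigma_t$ to be round of radius $\cos t$. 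Reconstructing $g$ from this foliation by round, umbilic leaves with prescribed spacing would then yield the standard metric on $\hemiN$.

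The main obstacle, and the reason the argument is delicate, is the degeneracy at $t = 0$. There $h_0$ vanishes, so $\partial\hemiN$ itself is a minimal $\mu$-bubble and the stability inequality carries no slack whatsoever; every error term in the test-function computation must vanish to leading order. Producing such a test function using only \ref{MinOo1}-\ref{MinOo3} is the crux of the problem, and propagating strict rigidity across a whole one-parameter family with no room at the initial leaf is where the argument is most likely to fail. One anticipates that to make the $\mu$-bubble machinery close, some additional control on geometric quantities near $\partial\hemiN$ (beyond the totally-geodesic condition) may have to be extracted from \ref{MinOo1}-\ref{MinOo3}, and developing this control is the technical heart of the approach.
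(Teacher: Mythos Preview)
Your proposal attempts to prove Min-Oo's conjecture, but this conjecture is \emph{false}: Brendle--Marques--Neves \cite{BMN11} constructed smooth metrics on $\hemiN$ satisfying all three hypotheses \ref{MinOo1}--\ref{MinOo3} yet not isometric to the round metric. Consequently no argument, $\mu$-bubble or otherwise, can succeed, and the paper does not prove this statement---it merely records it as motivation.

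Your own diagnosis of where the argument is fragile is in fact where it necessarily breaks. The stability inequality for a $\mu$-bubble leaf $\Sigma_t$ only yields rigidity after comparison with the model, and that comparison (e.g.\ the step $d\sigma_g \ge d\sigma_{\hat g}$ in the paper's \eqref{Rplus_intEst}, or the estimate $|\ed\mu|_g \le |\ed\mu|_{\hat g}$ in Lemma~\ref{RplusLemma}) requires a pointwise relation between $g$ and $\hat g$ that hypotheses \ref{MinOo1}--\ref{MinOo3} simply do not provide. The ``additional control'' you anticipate needing cannot be extracted from \ref{MinOo1}--\ref{MinOo3} alone; the paper supplies it by \emph{adding} the hypothesis $g \ge \hat g$ (Conjecture~\ref{minooconj2}, proved as Theorem~\ref{capRigThm0} for $n=3$). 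With that extra assumption the area and gradient comparisons go through, the Gauss--Bonnet argument in Proposition~\ref{levelsetProp} saturates, and the foliation argument you sketch becomes the actual proof. Without it, the BMN counterexamples show the gap is genuine and unclosable.
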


Unlike its counterparts%
\footnote{See \cite[Corollary 2]{SY79Manu}, \cite[Theorem A]{GL83}, \cite{MinOo89} and \cite{ACG08}.} 
modeled on $\R^n$ and $\H^n$,
Min-Oo's conjecture
turned out to admit counterexamples (see \cite{BMN11}). Yet, its statement remains interesting, especially when it is compared with the following theorem of Llaull (see \cite[Theorem A]{Llarull98}).

\begin{theorem}\label{LlarullThm} {\rm (Llarull)}
	Let $(\S^n,\hat g)$ be the standard $n$-sphere $(n\ge 3)$. Suppose that $g$ is another Riemannian
	metric on $\S^n$ satisfying $g\ge \hat g$ and $R_g\ge R_{\hat g}$. Then $g = \hat g$.
\end{theorem}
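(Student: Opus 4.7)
The plan is to follow Llarull's spinorial argument. Equip $\S^n$ with its unique spin structure and form the Dirac operator $D$ of $(\S^n, g)$ twisted by the intrinsic spinor bundle $\Scal_{\hat g}$ of the standard metric, so that it acts on sections of $\Scal_g \otimes \Scal_{\hat g}$. A direct Atiyah--Singer computation (via $\hat A(\S^n)\cdot\mathrm{ch}(\Scal_{\hat g}) \neq 0$, with the parities of $n$ handled separately) shows that this twisted operator has nonzero index, and hence admits a nontrivial harmonic section $\sigma$.

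Next I would invoke the Lichnerowicz--Weitzenb\"ock formula
\begin{equation*}
D^2 \;=\; \nabla^*\nabla + \tfrac{R_g}{4}\,\mathrm{id} + \mathcal{R},
\end{equation*}
where $\mathcal{R}$ is the curvature endomorphism on $\Scal_g\otimes\Scal_{\hat g}$ induced by the Levi-Civita connection of $\hat g$, expressed via Clifford multiplication in a $g$-orthonormal frame. Integrating $D\sigma=0$ against $\sigma$ yields
\begin{equation*}
0 \;=\; \int_{\S^n}\left(|\nabla\sigma|^2 + \tfrac{R_g}{4}|\sigma|^2 + \<\mathcal{R}\sigma,\sigma\>\right)d\vol_g.
\end{equation*}
The technical core is Llarull's pointwise bound $\<\mathcal{R}\sigma,\sigma\>\geq -\tfrac{R_{\hat g}}{4}|\sigma|^2$. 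This is where the hypothesis $g\geq\hat g$ enters: diagonalising $\hat g$ in a $g$-orthonormal frame $\{e_i\}$ gives $\hat g(e_i,e_j)=\mu_i^2\delta_{ij}$ with $\mu_i\leq 1$, and because $(\S^n,\hat g)$ has constant sectional curvature one, the twisting curvature reduces to a Clifford-algebraic expression involving the products $\mu_i\mu_j$. I expect this Clifford estimate to be the principal obstacle, as it must be sharp: the lower bound must degrade by precisely the correct factor $\mu_i\mu_j$ and, under the constraint $\mu_i\leq 1$, remain at or above $-R_{\hat g}/4$.

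Combining the pointwise bound with $R_g\geq R_{\hat g}=n(n-1)$ turns the integrand into a sum of nonnegative terms whose integral vanishes. Hence $\nabla\sigma\equiv 0$, $R_g\equiv R_{\hat g}$, and equality holds pointwise in Llarull's bound. Since $\sigma$ is parallel and nontrivial, $|\sigma|$ is a nonvanishing constant on $\S^n$, and the equality case of the Clifford-algebraic step forces every eigenvalue $\mu_i$ to equal $1$ at every point; that is, $g=\hat g$, as desired.
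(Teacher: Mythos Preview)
The paper does not supply its own proof of Theorem~\ref{LlarullThm}; it is quoted as a known result with a reference to Llarull's original paper \cite{Llarull98}. Your sketch is essentially Llarull's own spinorial argument, and as such it is correct in outline: the twisted Dirac operator has nonzero index, the Lichnerowicz formula produces the integral identity you write, and the crucial pointwise Clifford estimate $\<\mathcal{R}\sigma,\sigma\>\geq -\tfrac{R_{\hat g}}{4}|\sigma|^2$ under $g\ge \hat g$ is exactly Llarull's Lemma~4.1--4.3. The equality analysis at the end is also as in Llarull.

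What the paper \emph{does} do is recover the special case $n=3$ by an entirely different route. Theorem~\ref{capRigThm2_Intro} (the doubly punctured $\S^3$), taken with $h\equiv 1$, implies Llarull's theorem in dimension three. That argument is purely variational: one removes two antipodal points, constructs functions $\mu_\alpha$ satisfying barrier conditions on an exhausting family of Riemannian bands, obtains $\mu_\alpha$-bubbles, extracts a limiting stable $\hat H$-hypersurface via a Gauss--Bonnet/no-crossing argument, and then runs a foliation-plus-rigidity scheme. No spin structure, no Dirac operator, no index theory.

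So the comparison is this: your approach (Llarull's) works in all dimensions $n\ge 3$ but requires the spinorial machinery and the delicate Clifford-algebraic bound; the paper's approach is elementary and variational but, as it stands, is confined to $n=3$ because it leans on the Gauss--Bonnet theorem for surfaces. The authors explicitly note that ``a purely variational proof of Llarull's original theorem remains to be found'' for general $n$.
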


A side-by-side view of Min-Oo's conjecture and Llarull's theorem suggests the following.

\begin{conj}\label{minooconj2}
Let $(\S^n_+,\hat g)$ be the standard $n$-dimensional hemisphere. 
Then Conjecture~\ref{minooconj} holds under the
additional assumption:
   $g\geq \hat g$.
\end{conj}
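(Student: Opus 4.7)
The plan is to use Gromov's $\mu$-bubble technique, which in dimension $3$ couples sharply with the Gauss--Bonnet formula, to construct a foliation of $(\hemiT, g)$ by stable prescribed-mean-curvature $2$-spheres tuned to the standard hemisphere, and then combine the resulting stability inequality with $g \ge \hat g$ to force the warped-product structure of $\hat g$. On the model, the constant-$t$ spheres of $\hat g = dt^2 + \cos^2 t\, g_{\S^2}$ are umbilic with mean curvature $h(t) := -2\tan t$, satisfying the scalar-tuning identity $h^2 + 2h' = -4$. On $(M,g) = (\hemiT, g)$, let $\tilde t$ be a smoothed version of $\dist_g(\cdot, \partial M)$ with $|\nabla \tilde t|_g \le 1$; the hypothesis $g \ge \hat g$ guarantees $\sup_M \tilde t \ge \pi/2$. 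For each shift $s$ in a suitable interval, minimize the $\mu$-bubble functional
\[
\mathcal{A}_s(A) \;=\; \Hcal^2_g(\partial^* A) \;-\; \int_A h(\tilde t - s)\, d\Hcal^3_g
\]
over Caccioppoli sets trapped between $\partial M$ and an inner barrier where the prescribing function diverges, producing a smooth minimizer $\Sigma_s$ with $H \equiv h(\tilde t - s)$.

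Testing the second-variation inequality with $\phi \equiv 1$, applying the Gauss equation $2K_\Sigma = R_g - 2\Ric(\nub,\nub) + H^2 - |\II|^2$, and invoking $|\II|^2 \ge H^2/2$, $\nub(h) \ge h'$ (which follows from $|\nabla \tilde t|\le 1$ and $h' < 0$), together with the identity $h^2 + 2h' = -4$, yields
\[
2\pi\chi(\Sigma_s) \;\ge\; \int_{\Sigma_s}\!\bigl(1 + \tfrac{h^2}{4}\bigr)\, d\area_g \;+\; \int_{\Sigma_s}\!\Bigl[\tfrac{R_g-6}{2} + \bigl(|\II|^2 - \tfrac{H^2}{2}\bigr) + (\nub(h) - h')\Bigr]\, d\area_g.
\]
Each bracketed term is non-negative, forcing $\chi(\Sigma_s) > 0$ and hence $\Sigma_s \cong \S^2$, with the upper bound $\area_g(\Sigma_s) \le 4\pi$. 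To close the gap to rigidity, the hypothesis $g \ge \hat g$ now enters directly: the identity map $(M,g) \to (M,\hat g)$ is $1$-Lipschitz, so $\area_g(\Sigma_s) \ge \area_{\hat g}(\Sigma_s)$, and coupling this with a degree-one retraction of $\Sigma_s$ onto the round model level sphere at height $s$ (via the standard foliation of $\hat g$) supplies the matching lower bound $\area_{\hat g}(\Sigma_s) \ge 4\pi \cos^2 s$.

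Sandwiching the areas forces equality throughout all three non-negative bracketed terms above: $R_g \equiv 6$ on the foliated region, each $\Sigma_s$ is totally umbilic, and $\nub = \nabla \tilde t$ with $|\nabla \tilde t| \equiv 1$. Sweeping $s$ across $(0, \pi/2)$ then realises $g$ globally as the warped product $d\tilde t^2 + \cos^2 \tilde t\, g_{\S^2}$; combined with the boundary conditions (2) and (3), the fibre metric is identified with the standard $g_{\S^2}$ and we conclude $g = \hat g$. The main technical obstacle is precisely the lower bound $\area_{\hat g}(\Sigma_s) \ge 4\pi \cos^2 s$: in contrast to spinorial Llarull-type proofs where $g \ge \hat g$ plugs directly into a Weitzenb\"ock estimate, in the $\mu$-bubble setting it must be extracted geometrically via a carefully chosen degree-one comparison with the model, and one must simultaneously ensure that the $\mu$-bubbles for varying $s$ stack into a unit-speed foliation aligned with $\tilde t$, so that the pointwise equality in stability propagates into the global warped-product conclusion.
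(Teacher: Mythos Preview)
Your overall framework—$\mu$-bubbles, the stability inequality rearranged via the Gauss equation, and Gauss--Bonnet—is the same as the paper's. The genuine gap lies in your choice of prescribing function and, consequently, in the area-matching step you yourself flag as the main obstacle.

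You take $\mu = h(\tilde t - s)$ with $\tilde t$ a smoothed $g$-distance. Stability then gives the upper bound $\int_{\Sigma_s} \cos^{-2}(\tilde t - s)\, d\sigma_g \le 4\pi$, but the weight $\cos^{-2}(\tilde t - s)$ is tied to the $g$-geometry and has no sharp relation to the $\hat g$-warping factor. Your proposed lower bound $\area_{\hat g}(\Sigma_s) \ge 4\pi\cos^2 s$ does not follow from a degree argument: radial projection along the $\hat g$-foliation is not area-nonincreasing (the warping factor varies), and what the degree actually controls is the \emph{weighted} integral $\int_{\Sigma} \varphi^{-2}\, d\sigma_{\hat g}$, not the raw $\hat g$-area. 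Even granting it, the bounds do not sandwich: you would be left with $4\pi\cos^2 s \le \area_{\hat g}(\Sigma_s)\le \area_g(\Sigma_s) \le 4\pi$, which forces nothing unless $s = 0$.

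The paper's fix is to take $\mu = \hat H(t)$ where $t$ is the \emph{model} radial coordinate (the $\hat g$-distance from the pole), not a $g$-distance. The hypothesis $g \ge \hat g$ then enters exactly as $|\ed t|_g \le |\ed t|_{\hat g} = 1$, which is the inequality you need for $-\nu(\mu) \le |\mu'(t)|$ (Lemma~\ref{RplusLemma}). With this choice the stability computation yields $R_+^\mu \ge 2/\varphi(t)^2$ with $\varphi$ the model warping function, and the projection estimate $d\sigma_{\hat g} \ge \varphi^2\,|\Phi^* d\sigma_{\S^2}|$ (Lemma~\ref{areaCompLemma}) gives the exactly matching lower bound $\int_{\Sigma} R_+^\mu\, d\sigma_g \ge 8\pi$ via the degree of $\Phi$. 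The two factors of $\varphi^2$ cancel; that cancellation is destroyed by your $\tilde t$.

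A secondary point: the paper does not produce the foliation by sweeping a parameter $s$. It finds a single $\hat H$-bubble (via perturbations $\mu_\epsilon$ arranged to satisfy barrier conditions, Section~\ref{Sec_initMB}), shows its good component is a model level set (Proposition~\ref{levelsetProp}), and then uses a local foliation lemma plus an open-closed argument (Lemma~\ref{localRigLemma}, Proposition~\ref{rigidityProp}) to propagate $g = \hat g$. Your plan to vary $s$ would require separate work to show the resulting bubbles are disjoint and foliate $M$ with unit normal speed.
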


Our first result in this article is that
Conjecture \ref{minooconj2} holds when $n = 3$; here is a more precise statement
(cf. Corollary~\ref{bdryMetricCor} below):

\begin{theorem}\label{capRigThm0}
	Let $(\S^{3}_{+}, \hat g)$ be the standard $3$-dimensional hemisphere. Suppose that $g$ is another Riemannian
	metric on $\S^{3}_{+}$ with the properties: 
	\begin{enumerate}[\qquad\rm(1)]
	  \item $g\ge \hat g$ and $R_g\ge R_{\hat g}$  on  $\S^{3}_{+}$;
    	\item the mean curvature%
		\footnote{Given a domain $\Omega$ in a Riemannian manifold, 
		unless we specify otherwise,
		we shall adopt the (sign) convention for the mean curvature of $\partial\Omega$
		to be  $H = \tr(\nabla \nu)$, where $\nu$ is the \emph{outward} unit normal 
		along $\partial\Omega$.
		Under this convention, the mean curvature of the boundary of the unit ball in $\R^n$ is $n-1$.} 
		$H_g$ on  $\partial \S^{3}_{+}$ satisfies $H_g\ge 0$;
	 \item the induced metrics on $\partial\hemiT$ satisfy $g_{\partial\S^{3}_{+}}=\hat{g}_{\partial\S^{3}_{+}}$. \label{hemiRigAssu_metric}
	\end{enumerate}
	Then $g = \hat g$.
\end{theorem}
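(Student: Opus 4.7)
The plan is to use the $\mu$-bubble technique to produce, in the interior of $(\S^3_+, g)$, a $2$-dimensional surface whose prescribed mean curvature matches that of a latitude sphere in the standard hemisphere. In dimension three, the stability inequality for such a $\mu$-bubble, combined with the Gauss equation and $R_g \ge 6$, can be converted into an inequality of the form $\int_\Sigma K_\Sigma \ge \int_\Sigma \sec^2\rho\,\ed A$, which via Gauss--Bonnet will force the intermediate inequalities to be saturated and thereby pin down the geometry of $\Sigma$.

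Concretely, set $\rho(x) := \dist_g(x, \partial \S^3_+)$. Since $g \ge \hat g$ and the boundary metrics agree, $\rho \ge \hat\rho$ pointwise. In the model, the latitude sphere at distance $s$ from the equator is totally umbilical with mean curvature $-2\tan s$ with respect to the normal pointing away from the boundary. Guided by this, I would introduce the $\mu$-bubble functional
\[
    \mathcal{A}(\Omega) = \area_g\bigl(\partial \Omega \setminus \partial\S^3_+\bigr) - \int_\Omega h(\rho)\,\ed\vol_g, \qquad h(\rho) := -2\tan\rho,
\]
minimized over Caccioppoli sets $\Omega \subset \{\rho < \pi/2\}$ containing a thin collar of $\partial\S^3_+$. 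The barriers needed for existence come from $H_g \ge 0 = h(0)$ at $\partial\S^3_+$ and from $h \to -\infty$ as $\rho \to (\pi/2)^-$; Gromov's existence theorem then yields a smooth stable minimizer $\Sigma$ with $H_\Sigma = h(\rho)|_\Sigma$. Plugging $\phi \equiv 1$ into the second-variation inequality
\[
    0 \ge \int_\Sigma \bigl(|A|^2 + \Ric_g(\nu,\nu) + \partial_\nu h\bigr)\,\ed A,
\]
and applying the Gauss equation $\Ric_g(\nu,\nu) = \tfrac{1}{2}(R_g - 2K_\Sigma + H_\Sigma^2 - |A|^2)$, the bounds $R_g \ge 6$ and $|A|^2 \ge H_\Sigma^2/2$, and $|\langle \nabla\rho,\nu\rangle| \le 1$ (which yields $\partial_\nu h \ge -2\sec^2\rho$), one extracts
\[
    \int_\Sigma K_\Sigma \ge \int_\Sigma \sec^2\rho\,\ed A \ge \area_g(\Sigma).
\]
A direct model check shows that these inequalities are all equalities along every latitude $\hat\Sigma_t$, and combined with $\int_\Sigma K_\Sigma \le 4\pi$ (Gauss--Bonnet on each sphere component), the desired conclusion is that $\Sigma$ must be a totally umbilical round $2$-sphere along which $R_g = 6$ and $\nu = \nabla\rho$; in particular $\rho$ is constant on $\Sigma$, so $\Sigma$ is a level set of $\rho$ of area $4\pi\cos^2 t$ for some $t \in (0,\pi/2)$.

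To finish, I would sweep out a one-parameter family of such $\mu$-bubbles --- e.g., by varying the ambient domain $\{a < \rho < \pi/2\}$ through values of $a \in [0,\pi/2)$ --- so as to foliate $\S^3_+$ by totally umbilical round level sets of $\rho$ of the correct areas; writing $g$ as a warped product over this foliation then recovers $\ed\rho^2 + \cos^2\rho\, g_{\S^2}$ and yields $g = \hat g$. The principal obstacle is forcing the stability chain to be saturated for \emph{every} such minimizer, rather than merely in the equality case; this is not automatic from minimality alone. A crucial additional ingredient should be the comparison $\mathcal{A}(\Omega) \ge \hat{\mathcal{A}}(\Omega) \ge \hat{\mathcal{A}}(\hat\Omega_t) \equiv 4\pi$ implied by $g \ge \hat g$ and $h \le 0$, together with the calibrated character of the model foliation (whose $\mu$-bubble value is independent of $t$). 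Secondary technical issues include ruling out higher-genus or extraneous components of $\Sigma$, controlling the degenerate locus $\rho = \pi/2$, and globalizing the local umbilical foliation into a warped-product decomposition of $(\S^3_+, g)$.
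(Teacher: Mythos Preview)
Your stability computation is essentially correct and mirrors the paper's core step (Proposition~\ref{levelsetProp}): plugging $\phi\equiv 1$ into the second variation and using the Gauss equation, $R_g\ge 6$, $|A|^2\ge \tfrac12 H^2$, and $|\nabla\rho|\le 1$ yields $\int_\Sigma K_\Sigma \ge \int_\Sigma \sec^2\rho\,\ed A$. But as you yourself flag, this chain alone does \emph{not} force saturation: Gauss--Bonnet only gives $\int_\Sigma \sec^2\rho \le 4\pi$, and a matching \emph{lower} bound $\int_\Sigma \sec^2\rho \ge 4\pi$ is still missing. The paper obtains that lower bound via a degree argument: with the prescribing function built from the \emph{model} coordinate $t$ (distance in $\hat g$ from the pole) rather than from $\dist_g$, one has $d\sigma_g\ge d\sigma_{\hat g}\ge \sin^2 t\,|\Phi^*d\sigma_{\S^2}|$ (Lemma~\ref{areaCompLemma}), hence $\int_\Sigma (2/\sin^2 t)\,d\sigma_g \ge 8\pi|\deg\Phi|$. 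Your choice $\rho=\dist_g(\cdot,\partial\S^3_+)$ does not support this step---there is no a priori reason $\int_\Sigma \sec^2\rho\,d\sigma_g\ge 4\pi$---and it also brings cut-locus regularity issues and makes the ``band'' $\{\rho<\pi/2\}$ ill-behaved (since $\rho\ge\hat\rho$ may well exceed $\pi/2$ on an open set).

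The more substantial divergence is in how hypothesis~(\ref{hemiRigAssu_metric}) enters. You try to produce an interior $\mu$-bubble via barriers, but the barrier at $\partial\S^3_+$ is not strict ($H_g\ge 0$, not $>0$), and you only invoke the calibration comparison $\mathcal A_g(\Omega)\ge\hat{\mathcal A}(\Omega)\ge 4\pi$ as a late patch. The paper reverses this order (Corollary~\ref{bdryMetricCor}): the calibration is the \emph{first} step. With $\mu=\hat H=2\cot t$ (model coordinate), the divergence theorem gives $\Bcal^\mu_{\hat g}(\Omega)\ge \Bcal^\mu_{\hat g}(\S^3_+)$ for all competitors; since $g\ge\hat g$ and $\mu\ge 0$ one has $\Bcal^\mu_g\ge\Bcal^\mu_{\hat g}$; and hypothesis~(\ref{hemiRigAssu_metric}) is exactly what makes $\Bcal^\mu_g(\S^3_+)=\Bcal^\mu_{\hat g}(\S^3_+)$. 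Thus the whole hemisphere is itself a minimizer in $(\S^3_+,g)$---no barriers needed---and $\partial\S^3_+$ is already a stable $\hat H$-hypersurface with $\deg\Phi=1$. From there the rigidity analysis of a stable $\mu$-hypersurface and the foliation/open--closed argument (Lemma~\ref{localRigLemma}, Proposition~\ref{rigidityProp}) run essentially as you outline. In short: switch from $\dist_g$ to the model coordinate, and promote your closing ``additional ingredient'' to the opening move.
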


As we will see below, Theorem~\ref{capRigThm0} admits a somewhat direct proof. 
With more technical work, we can generalize it in the following aspects: (i) the assumption (\ref{hemiRigAssu_metric}) in Theorem~\ref{capRigThm0} will be removed; and 
(ii) the model space will not need to be the standard hemisphere---it can be a `spherical cap' or,  more generally, a geodesic ball inside a space form. To make these points explicit, we now state our main result (cf. Theorem~\ref{capRigThm1_conf} below). 

\begin{theorem}\label{capRigThm1}
For any suitable constants $\kappa, \cmc$, let $(\B_{\kappa,\cmc}, \hat g_\kappa)$ 
be a geodesic ball in the $3$-dimensional space form with sectional curvature $\kappa$ such that $\partial\B_{\kappa,\cmc}$ has mean curvature $\cmc$.  
Suppose that $g$ is another Riemannian metric on $\B_{\kappa,\cmc}$ satisfying
$$
	g\ge \hat g_{\kappa}, \quad R_g \ge 6\kappa \mbox{ on } \B_{\kappa,\cmc}\qquad \mbox{and}\qquad H_g \ge \cmc \mbox{ on } \partial \B_{\kappa,\cmc}.
$$
Then $g = \hat g_\kappa$.
\end{theorem}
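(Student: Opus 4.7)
The plan is to use Gromov's $\mu$-bubble technique to produce, inside $(\B_{\kappa,\cmc},g)$, a prescribed-mean-curvature foliation that mimics the standard concentric foliation of the model $(\B_{\kappa,\cmc},\hat g_\kappa)$, and to deduce $g=\hat g_\kappa$ from sharpness of the stability inequality along each leaf together with the comparison $g\ge\hat g_\kappa$. Write the model in geodesic polar coordinates about its centre as $\hat g_\kappa = dt^2+\phi_\kappa(t)^2 g_{\S^2}$; then the level sphere $\{t=\tau\}$ has constant mean curvature $H_0(\tau)=2\phi_\kappa'(\tau)/\phi_\kappa(\tau)$, strictly decreasing from $+\infty$ at the centre down to $\cmc$ at $\tau=\rho_0$, and satisfying the Riccati identity $H_0'+\tfrac12 H_0^2+2\kappa=0$. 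Transplant $t$ to $M:=\B_{\kappa,\cmc}$ via the identity map to obtain $\bar t\in C^\infty(M)$, and for each $\tau_0\in(0,\rho_0)$ consider the $\mu$-bubble functional
\begin{equation*}
\Acal(\Omega)=\area_g(\partial^*\Omega)-\int_\Omega H_0(\bar t)\,d\vol_g
\end{equation*}
over Caccioppoli sets enclosing a fixed small ball about the centre.

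Because $H_0\circ\bar t\to+\infty$ near the centre, standard $\mu$-bubble existence theory (with interior regularity in dimension $\le 7$) produces a smooth embedded minimizer $\Sigma_{\tau_0}\subset M^\circ$ with mean curvature $H_{\Sigma_{\tau_0}}=H_0\circ\bar t$. The heart of the argument is to test the second-variation with $\psi\equiv 1$, apply the Gauss equation to rewrite $|\II|^2+\Ric(\nu,\nu)$, decompose $|\II|^2=|\mathring\II|^2+\tfrac12 H^2$, and apply Gauss--Bonnet (once $\Sigma_{\tau_0}$ has been shown topologically $\S^2$ via a connectivity/coercivity argument). The Riccati identity combined with the gradient bound $|\nabla^g\bar t|_g\le 1$---which is exactly what $g\ge\hat g_\kappa$ buys, since $\bar t$ is the $\hat g_\kappa$-distance transplanted by $\id$---produces an inequality of the schematic form
\begin{equation*}
\int_{\Sigma_{\tau_0}}(R_g-6\kappa)\,dA + \int_{\Sigma_{\tau_0}}|\mathring\II|^2\,dA + \int_{\Sigma_{\tau_0}} |H_0'(\bar t)|\bigl(1-\langle\nabla^g\bar t,\nu\rangle_g\bigr)\,dA \le 0,
\end{equation*}
in which every integrand is pointwise nonnegative. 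Hence all three must vanish: $\Sigma_{\tau_0}$ is totally umbilic, $R_g=6\kappa$ along it, the Riccati defect vanishes in the normal direction, and the induced metric on $\Sigma_{\tau_0}$ is isometric to the corresponding model sphere.

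Sweeping $\tau_0$ across $(0,\rho_0)$, the leaves $\Sigma_{\tau_0}$ assemble into a smooth foliation of $M$; the vanishing of the Riccati defect along each normal then reconstructs a warped-product structure on $(M,g)$ isometric to $\hat g_\kappa$, and the hypothesis $H_g\ge\cmc$ on $\partial M$ anchors the foliation at $\partial M$ as the outermost leaf with no slack, so $g=\hat g_\kappa$ globally. The principal obstacle is Step~2: engineering the prescribing function and space of competitors so simultaneously (i) the minimizer $\Sigma_{\tau_0}$ is a $2$-sphere (handled by the coercivity of $\Acal$ coming from the blow-up of $H_0$ at the centre), (ii) the Riccati defect appears as a manifestly nonnegative integrand under only the geometric hypothesis $g\ge\hat g_\kappa$ (the identity $|\nabla^g\bar t|_g\le 1$ is the crux, and it forces $\bar t$ to be chosen as $\hat g_\kappa$-distance rather than $g$-distance), and (iii) the family $\{\Sigma_{\tau_0}\}$ varies smoothly in $\tau_0$ and exhausts $M$, which is a strict-stability / implicit function argument that propagates rigidity outward from the centre to $\partial M$.
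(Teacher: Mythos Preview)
Your strategy is in the right spirit, but the central inequality does not close the way you claim. After testing stability with $\psi\equiv 1$, applying Gauss--Bonnet (granting $\Sigma\cong\S^2$), writing $|\II|^2=|\mathring\II|^2+\tfrac12 H^2$, and using the Riccati identity together with $|\nabla^g\bar t|_g\le 1$, what actually falls out is
\[
8\pi \;\ge\; \int_\Sigma \Bigl[(R_g-6\kappa)+|\mathring\II|^2 + 2|H_0'|\bigl(1-\langle\nabla^g\bar t,\nu\rangle\bigr)\Bigr]\,d\sigma_g \;+\;\int_\Sigma \frac{2}{\phi_\kappa(\bar t)^2}\,d\sigma_g,
\]
not an inequality with right-hand side zero. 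To force the bracketed terms to vanish you must know $\int_\Sigma 2\phi_\kappa^{-2}\,d\sigma_g\ge 8\pi$, and that is \emph{not} a consequence of $\Sigma\cong\S^2$. This is exactly where the paper does real work: one uses $g\ge\hat g_\kappa$ a second time (beyond the gradient bound) to get $d\sigma_g\ge d\sigma_{\hat g}\ge \phi_\kappa^2\,|\Phi^*d\sigma_{\S^2}|$ for the radial projection $\Phi:\Sigma\to\S^2$, and then invokes $|\deg\Phi|\ge 1$ (since $\Sigma$ separates the centre from $\partial M$) to conclude $\int_\Sigma 2\phi_\kappa^{-2}\,d\sigma_g\ge 8\pi|\deg\Phi|\ge 8\pi$. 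The degree/area comparison is the crux of the rigidity step, and it is absent from your sketch.

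There are also structural issues. First, your functional does not depend on $\tau_0$, so no family $\{\Sigma_{\tau_0}\}$ is actually produced; the paper instead finds a \emph{single} initial stable $\hat H$-hypersurface and then propagates rigidity by a local foliation lemma combined with an open--closed argument. Second, the barrier condition fails for the unperturbed weight at $\partial M$: you only know $H_g\ge \cmc = H_0(\rho_0)$, not strict inequality, so a minimizer need not lie in the interior. The paper fixes this by perturbing the weight to $\mu_\epsilon=\hat H+\epsilon\beta$ with $\beta(T)<0$ (forcing a strict barrier at both ends), obtaining $\mu_\epsilon$-bubbles $\Omega_\epsilon$, and passing $\epsilon\to 0$. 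This limit is delicate: a substantial ``no-crossing'' argument (again driven by the Gauss--Bonnet/degree inequality above, now with a small error term) is needed to keep the good component of $\partial\Omega_\epsilon$ uniformly away from the centre so that the degree survives in the limit.
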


In Gromov's first preprint of \cite{Gromov19Mean}, a (more general) version of Theorem~\ref{capRigThm1}
 was stated as a `non-existence' result (see \cite[Theorem 1]{Gromov18MeanPp}); 
 an outline of proof was sketched, which relied on a ``generalized Llarull's theorem". 
Following Gromov's main idea, we present a detailed and purely variational proof of Theorem~\ref{capRigThm1}; this theorem also confirms, in the case of $n = 3$, a rigidity statement mentioned in \cite[Remark (d)]{Gromov18MeanPp} without proof.

A simple modification of the proof of Theorem~\ref{capRigThm1} yields the following (cf. Theorem~\ref{capRigThm2}).
 
 \begin{theorem}\label{capRigThm2_Intro}
 Let $(\S^3\setdiff\{O,O'\},\hat g)$ be the standard $3$-sphere with a pair of antipodal points removed, and let $h\ge 1$ be a smooth function on $\S^3\setdiff\{O,O'\}$.
 Suppose that $g$ is another Riemannian metric on $\S^3\setdiff\{O,O'\}$ satisfying
\[
 	 g\ge h^4 \hat g\quad \mbox{and}\quad R_g\ge h^{-2}  R_{\hat g}.
\]
 Then $h\equiv 1$, and $g = \hat g$.
\end{theorem}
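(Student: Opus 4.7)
The plan is to adapt the $\mu$-bubble argument used to prove Theorem~\ref{capRigThm1} to the non-compact setting of $\S^3\setdiff\{O,O'\}$, by exhausting the domain with spherical caps and building the variable function $h$ into the prescribed mean curvature of the bubble.

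First I foliate $(\S^3,\hat g)$ by distance spheres from the puncture $O$, giving the warped-product decomposition $\hat g=dt^2+\sin^2(t)\,g_{\S^2}$ on $(0,\pi)\times\S^2$, so the level sphere $\Sigma_t$ has $\hat g$-mean curvature $2\cot t$. For each small $\delta>0$, the slab $\Omega_\delta=\{\delta\le t\le\pi-\delta\}$ is a compact subdomain of $\S^3\setdiff\{O,O'\}$ with two boundary spheres. On $\Omega_\delta$ I run the $\mu$-bubble minimization from the proof of Theorem~\ref{capRigThm1}, with the sole modification that the constant weight coming from a fixed model cap is replaced by the position-dependent weight
\[
\mu_h(x) \;=\; 2\,h(x)^{-2}\cot \tau(x),
\]
where $\tau$ is a suitable radial coordinate; this is motivated by conformal scaling, since multiplying $\hat g$ by $h^4$ rescales mean curvatures of radial spheres by $h^{-2}$, so $\mu_h$ is the pointwise ``model'' mean curvature one expects when comparing $g$ against $h^4\hat g$.

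Next I analyze the stability of a $\mu_h$-bubble minimizer $\Sigma\subset\Omega_\delta$. The second-variation inequality, combined with the Gauss equation and the scalar curvature bound $R_g\ge h^{-2}R_{\hat g}=6h^{-2}$, should reproduce the integral inequality from the proof of Theorem~\ref{capRigThm1} after every occurrence of the constant sectional curvature $\kappa$ is replaced by the local value $h^{-4}$. The metric bound $g\ge h^4\hat g$ controls the induced area of $\Sigma$ from below, while the gap between $h^{-2}R_{\hat g}$ and the conformal scalar curvature $R_{h^4\hat g}=h^{-4}R_{\hat g}-8h^{-5}\Delta_{\hat g}h$ absorbs the $\nabla h$ and $\Delta h$ terms produced by the variable weight $\mu_h$. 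The equality case then forces $\Sigma$ to be umbilic, $g=h^4\hat g$ along $\Sigma$, and---using $h\ge 1$---$h\equiv 1$ on $\Sigma$. Sweeping $\Omega_\delta$ by a one-parameter family of such bubbles yields $h\equiv 1$ and $g=\hat g$ on all of $\Omega_\delta$; letting $\delta\to 0$ gives the conclusion.

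The main obstacle is the bookkeeping in the stability inequality once $\mu_h$ is non-constant: the derivative-of-$h$ terms produced by the $\mu$-bubble functional must be absorbed precisely by the slack between the assumed lower bound on $R_g$ and the naive conformal scalar curvature $R_{h^4\hat g}$. Verifying that this ``simple modification'' closes is a careful integration by parts using the $h\ge 1$ hypothesis and the warped-product structure of $\hat g$.
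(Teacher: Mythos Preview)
Your proposal has two genuine gaps that the paper's argument avoids.

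First, the barrier condition. On the slab $\Omega_\delta$ you have no control whatsoever over the $g$-mean curvature of the boundary spheres $\{t=\delta\}$ and $\{t=\pi-\delta\}$; the hypotheses give no boundary information for $g$. Your proposed weight $\mu_h=2h^{-2}\cot\tau$ is bounded on $\Omega_\delta$, so there is no mechanism forcing a minimizer to stay in the interior, and Lemma~\ref{mubbExReg} does not apply. The paper resolves this by \emph{not} truncating $\hat H$ but instead dilating it: on $\Bbb_{[-\alpha,\alpha]}$ it takes $\mu_\alpha(t)=-2\tan\!\big(\tfrac{\pi}{2\alpha}t\big)$, which blows up to $\pm\infty$ at the two boundary components and therefore satisfies the barrier condition automatically, with no need for mean-curvature information on $\partial\Omega_\delta$.

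Second, and more seriously, building $h$ into the weight is the wrong move. If $\mu_h$ depends on $h$, then $|\ed\mu_h|_g$ produces $|\ed h|$ terms, and nothing in the hypotheses bounds $|\ed h|$ or $\Delta_{\hat g}h$; the ``slack'' you invoke between $h^{-2}R_{\hat g}$ and $R_{h^4\hat g}=h^{-4}R_{\hat g}-8h^{-5}\Delta_{\hat g}h$ has the Laplacian term with uncontrolled sign, so the absorption you describe does not close. The paper keeps $\mu=\mu_\alpha$ a function of $t$ alone and lets $h$ enter only through the elementary inequalities
\[
|\ed\mu_\alpha|_g \le h^{-2}|\ed\mu_\alpha|_{\hat g},\qquad R_g\ge h^{-2}R_{\hat g},\qquad \tfrac{3}{2}\mu_\alpha^2\ge h^{-2}\cdot\tfrac{3}{2}\mu_\alpha^2,
\]
the first from $g\ge h^4\hat g$ and the last from $h\ge 1$. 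These combine to give $R_+^{\mu_\alpha}\ge h^{-2}\big(\tfrac{2}{\varphi^2}+Z_{\mu_\alpha}\big)$ with \emph{no} derivatives of $h$ appearing. The factor $h^{-2}$ is then harmlessly absorbed in the Gauss--Bonnet step because $d\sigma_g\ge h^4\,d\sigma_{\hat g}$ and $h\ge 1$. Once a stable $\hat H$-hypersurface is obtained in the limit $\alpha\to\pi/2$, the equality analysis of Proposition~\ref{levelsetProp} forces $h\equiv 1$ on that slice, and the foliation argument propagates this globally.
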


When $h\equiv 1$, Theorem~\ref{capRigThm2_Intro} is a special case of 
Gromov's theorem of ``extremality of doubly punctured spheres"
(cf. \cite[Sections 5.5 and 5.7]{Gromov21Four}), and it implies Theorem~\ref{LlarullThm}
in the case of $n = 3$.
We also remark that Theorem~\ref{capRigThm2_Intro}
 would fail without the assumption $h\ge 1$
(see Remark~\ref{confRmk} below). 
We tend to believe that the conclusion of Theorem~\ref{capRigThm2_Intro} 
still holds when the condition
$g\geq h^{4}\hat{g}$ is replaced by $g\geq h^{2}\hat{g}$;  a condition such as  $\inf h>0$ would still be needed, otherwise, the  metric in Remark~\ref{confRmk} would  serve as a counterexample.   

Before sketching our technical ingredients, let us remind the reader that since the early 1980s, two different approaches---variational and spinorial---have been developed for studying the scalar curvature. Yet, for more than two decades, extensions of Llarull's rigidity theorem, 
like Llarull's original proof, had
 been mainly carried out from the spinorial approach.%
 \footnote{See, for example, \cite{GS02}, \cite{Herzlich05}, \cite{Listing10}, \cite{CZ21} (especially Theorem 1.15, Corollary 1.17) and \cite{Lott21}.}
It is relatively recent that variational methods have also become available for proving results of Llarull type.%
\footnote{To our best knowledge, 
a purely variational proof of Llarull's original theorem remains to be found.} 
A key in this new development, which is also a main tool for the current paper, is Gromov's 
\emph{$\mu$-bubble} technique \cite[Section 5]{Gromov21Four}.

Roughly speaking, given a function $\mu$ on a Riemannian manifold $(M^n,g)$, a
$\mu$-bubble is a minimizer (and a critical point) of the functional
\begin{equation}\label{functional_Intro}
	\Omega\mapsto\vol_{n-1}(\partial\Omega) - \int_{\Omega}\mu
\end{equation}
defined for suitable subsets $\Omega\subset M$; given a $\mu$-bubble, 
useful geometric information can be extracted from its first and second variation formulae.
In order to guarantee that 
a non-degenerate $\mu$-bubble exists, $(M,g)$ is
often assumed to be a \emph{Riemannian band}%
\footnote{See Section~\ref{Sec_mubb_defs} below for definition, and see
\cite{Gromov18GAFA} and \cite{Rade21} for related discussion.}%
, and $\mu$ is often required to satisfy a \emph{barrier condition} (see \eqref{barrierDef} below), which prevents minimizing sequences from collapsing either to a point or into $\partial M$.

In some cases, even without the assumption of either a Riemannian band or a barrier condition, a $\mu$-bubble may still be 
found by direct observation of the functional~\eqref{functional_Intro}. 
This is the case with our proof of Theorem~\ref{capRigThm0}---In fact,  if we modify \eqref{functional_Intro} by considering the new functional
 \begin{equation}\label{functionalMod_Intro}
 	\Omega\mapsto \vol_{n-1}(\partial\Omega) + \int_{\hemiT\setdiff\Omega}\mu,
\end{equation}
the variational properties remain unchanged; in our situation, the new functionals associated to $g$ and $\hat g$ admit an inequality, which becomes an equality
when $\Omega = \hemiT$, and then direct comparison shows that
$\hemiT$ is a $\mu$-bubble (see the proof of Corollary \ref{bdryMetricCor}).
We note that this argument crucially relies on the assumption~(\ref{hemiRigAssu_metric})
in Theorem~\ref{capRigThm0}.

Now let us continue to take Theorem~\ref{capRigThm0} as an example 
to explain how to obtain rigidity results from having an `initial' $\mu$-bubble $\Omega$.
Although $\Omega$ need not be $\S^3_+$, we do, for a technical reason, 
require that $\partial\Omega$ has a connected component $\Sigma_0$ whose projection onto $\S^2$ has \emph{nonzero degree} (see \eqref{projPhiDef})---for simplicity, let us call such a $\Sigma_0$
a `good component'. 
By using the second variation and the Gauss--Bonnet formulae, we show that, under certain extra assumptions, $\Sigma_0$
must be a $2$-sphere parallel (w.r.t. $\hat g$) to the equator $\partial\hemiT$; furthermore, along $\Sigma_0$
the ambient metric $g$ must agree with $\hat g$ (Proposition~\ref{levelsetProp}). 
This obtained, a standard foliation lemma
(Lemma~\ref{foliationLemma}) and minimality of $\Omega$ imply that 
$g$ must agree with $\hat g$ in a \emph{neighborhood} of $\Sigma_0$ (Lemma~\ref{localRigLemma}).
Finally, with an `open-closed' argument and standard facts in geometric measure theory, we show
that such a neighborhood can be extended to the whole manifold, thus completing the proof
(Proposition~\ref{rigidityProp}).

In the more general setting of Theorem~\ref{capRigThm1}, the existence of an `initial' $\mu$-bubble
becomes less direct to prove. For simplicity, let us still assume that the model space is the standard hemisphere.
Although $(\hemiT,g)$ is \emph{not} a Riemannian band, we may consider creating one from it by removing a small geodesic ball centered at the north pole $O\in (\hemiT,\hat g)$, but an immediate 
problem arises:
{the natural choice $\mu = \hat H$ (see \eqref{HhatDef}), 
which corresponds to the mean curvature 
of the geodesic spheres centered at $O$ with respect to $\hat g$, may not satisfy the barrier 
condition.} 

To address this problem, we construct a sequence of perturbations $\mu_\epsilon$ (see \eqref{muepsilon}; also cf. \cite[Section 3]{Zhu21})
of $\hat H$  that \emph{do} satisfy the barrier conditions on a corresponding sequence 
of Riemannian bands
$M_{\epsilon}\subset \hemiT$. In particular, in each $M_\epsilon$ there exists a 
$\mu_\epsilon$-bubble
$\Omega_\epsilon$ (Lemma~\ref{MBepsilonExistenceLemma}). By construction, $\mu_\epsilon$ tends to $\hat H$, and $M_\epsilon$
tends to $\hemiT$, as $\epsilon$ approaches $0$. However, two new questions arise: 
\begin{enumerate}[\qquad(a)]
	\item \emph{As $\epsilon$ tends to $0$, do the
$\Omega_\epsilon$ subconverge to an $\hat H$-bubble $\Omega$ in $(\hemiT, g)$? } 
	\item \emph{If so, does $\partial\Omega$ possess a component whose projection to $\S^2$ has nonzero
degree?}
\end{enumerate}
To put these in a slightly different way, regarding (a), we worry that $\Omega_\epsilon$ may become degenerate
in the limit;  regarding (b), we worry that the `good components' of $\partial\Omega_\epsilon$
may either approach the north pole $O$ and thus lose the `degree' property, or `meet and cancel' each other so that none of them
is actually preserved in the limit. 

In Sections~\ref{sec_noCross} and \ref{limitingSec}, 
we answer both questions (a) and (b) in the affirmative.
A key step is to argue that each $\partial\Omega_\epsilon$ not only possesses a `good component'
$\Sigma^\epsilon_0$, but such a component must be disjoint from a fixed neighborhood of $O\in \hemiT$ provided that $\epsilon$ is small
(Proposition~\ref{noCrossing}), which is, again, enforced by the Gauss--Bonnet theorem. 
This step allows us to obtain a universal upper bound for the 
norm of the second fundamental form on $\Sigma^\epsilon_0$, which is then used to prove 
the existence of a
limiting hypersurface $\Sigma_0$ that is indeed a component of 
$\partial\Omega$ (Lemma~\ref{convHS}).

Once having an `initial' $\mu$-bubble, one may complete the proof of Theorem~\ref{capRigThm1} by  the foliation argument described above.

Regarding Theorem~\ref{capRigThm2_Intro}, we may consider Riemannian bands in $\S^3\setdiff\{O,O'\}$ bounded by
small geodesic spheres in $(\S^3,\hat g)$ centered at $O$ and $O'$, but
because of the lack of mean curvature information with respect to $g$ along those boundaries,
perturbations of the form \eqref{muepsilon}
are no longer adequate for meeting the barrier condition. To address this issue, we construct
new functions $\mu_\alpha$ by composing the function $\hat H$ with dilations of
 $\S^3\setdiff\{O,O'\}$ in the `longitude' direction, and then $\mu_\alpha$
will satisfy the desired barrier conditions---see Section~\ref{Sec_gen} for more detail. The rest of the proof is similar to the other cases.


\section*{Acknowledgements}

The authors would like to thank Prof. Weiping Zhang for his interest in this work and Dr. Jintian Zhu
for helpful conversations.
Y. Shi and P. Liu are grateful for the support
of the National Key R\&D Program of China Grant 2020YFA0712800,
and Y. Hu is grateful for the support of the China Postdoctoral Science Foundation Grant 2021TQ0014.


\section{Elements of Gromov's $\mu$-bubble technique}\label{Sec_muB}

In this section we recall some elements of Gromov's $\mu$-bubble technique. Our discussion follows Section 5 of \cite{Gromov21Four}, Section 2 of \cite{Zhu21} and Section 3 of \cite{ZZ20}.

\subsection{$\mu$-bubbles in a Riemannian band}\label{Sec_mubb_defs}\

Let $(M^n,g)$ be a compact Riemannian manifold whose boundary $\partial M$ is expressed as a disjoint union $\partial M = \partial_- \sqcup\partial_+$ where both  $\partial_-$ and $\partial_+$ are 
closed hypersurfaces. Such a quadruple $(M,g; \partial_-,\partial_+)$ is called a \emph{Riemannian band}. 
Given a Riemannian band, let  $\Omega_0\subset M$ be a fixed smooth Caccioppoli set%
\footnote{Also known as `sets of locally finite perimeter'; see \cite{Giusti84} for details.}
 that contains a neighborhood of
$\partial_-$ and is disjoint from a neighborhood of $\partial_+$; we call such an $\Omega_0$  
a \emph{reference set}. 
Let $\Ccal_{\Omega_0}$ denote the collection of Caccioppoli sets  
$\Omega \subset M$ satisfying $\Omega\Delta\Omega_0\Subset \mathring M$ (``$\Subset$" reads ``is compactly contained in");
here $\Omega\Delta\Omega_0$ denotes the symmetric difference between $\Omega$ 
and $\Omega_0$, and 
$\mathring M$ stands for the interior of $M$.

Let $\mu$ be either a smooth function on $M$, or a smooth function defined
on $\mathring M$ satisfying $\mu\rightarrow \pm \infty$ on $\partial_{\mp}$.
For $\Omega\in \Ccal_{\Omega_0}$ consider the \emph{brane action}
\begin{equation}\label{braneActDef}
	\Acal^\mu_{\Omega_0}(\Omega) := \Hcal^{n-1}(\partial\Omega) - 
				\Hcal^{n-1}(\partial\Omega_0)
				-\int_M (\chi_{\Omega} - \chi_{\Omega_0})\mu \; d\Hcal^n
\end{equation}
where $\Hcal^k$ is the $k$-dimensional Hausdorff measure induced by $g$ and $\chi_{\Omega}$
denotes the characteristic function associated to $\Omega$. 
A minimizer $\Omega$ of \eqref{braneActDef} is called a \emph{$\mu$-bubble}.

\begin{remark}\label{braneAdditivity}
	(1) For $\Omega_1,\Omega_2\in \Ccal_{\Omega_0}$, we have
$\Acal^\mu_{\Omega_0}(\Omega_2)-  \Acal^{\mu}_{\Omega_1}(\Omega_2)= 
\Acal^{\mu}_{\Omega_0}(\Omega_1)$; thus, in a sense, minimizers are independent of
	  the choice of a reference set.  (2) The brane action \eqref{braneActDef} may be 
	  defined on manifolds that are not necessarily Riemannian bands; in those cases,
	  one may replace $\Hcal^{n-1}(\partial\Omega)$ 
	  by $\Hcal^{n-1}(\partial(\Omega\cap K))$ and similarly for 
	  $\Hcal^{n-1}(\partial\Omega_0)$,
	  where $K$ is a compact set such that
	  $\Omega\Delta\Omega_0\subset K$.
\end{remark}

\subsection{Existence and regularity}\

\begin{definition}\label{barrierDefn}
	Given a Riemannian band $(M,g;\partial_-,\partial_+)$, a function $\mu$ is said to satisfy the  \emph{barrier condition}
	if either $\mu\in C^\infty(\mathring M)$ with $\mu\rightarrow \pm\infty$
	on $\partial_{\mp}$, 
	or $\mu\in C^\infty(M)$ with
	\begin{equation}\label{barrierDef}
		\mu|_{\partial_-} > H_{\partial_-},\qquad \mu|_{\partial_+} < H_{\partial_+}
	\end{equation}
	where $H_{\partial_-}$ is the mean curvature of $\partial_-$ with respect to the inward normal and $H_{\partial_+}$ is the mean curvature of $\partial_+$ with respect to the outward normal.
\end{definition}

\begin{lemma}\label{mubbExReg}{\rm (Cf. \cite[Proposition 2.1]{Zhu21})}
	Let $(M^n,g; \partial_-,\partial_+)$ be a Riemannian band with  $n\le 7$,
	and let $\Omega_0$ be a reference set.
	If $\mu$
	satisfies the barrier condition, 
	then there exists an $\Omega\in \Ccal_{\Omega_0}$ with smooth boundary such that
	\[
		\Acal^\mu_{\Omega_0}(\Omega)
		 = \inf_{\Omega'\in \Ccal_{\Omega_0}} \Acal_{\Omega_0}^\mu(\Omega').
	\]
\end{lemma}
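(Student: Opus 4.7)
The plan is to apply the direct method of the calculus of variations. First I would fix a minimizing sequence $\{\Omega_k\}\subset \Ccal_{\Omega_0}$ of $\Acal^\mu_{\Omega_0}$. Under the barrier condition, I expect to be able to modify each $\Omega_k$ so that $\Omega_k\Delta\Omega_0$ stays in a fixed compact subset $K$ of $\mathring M$. Once this confinement is in place, the definition \eqref{braneActDef} together with $\sup_K|\mu|<\infty$ gives a uniform bound on $\Hcal^{n-1}(\partial\Omega_k)$, so $\{\chi_{\Omega_k}\}$ is bounded in $BV(M)$. By BV compactness and the lower semicontinuity of the perimeter, a subsequence converges in $L^1$ to $\chi_\Omega$ for a Caccioppoli set $\Omega\in\Ccal_{\Omega_0}$, and the $\mu$-integral passes to the limit by dominated convergence on $K$, yielding $\Acal^\mu_{\Omega_0}(\Omega)\le\liminf_k\Acal^\mu_{\Omega_0}(\Omega_k)$.

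The main obstacle is the confinement step, which prevents the infimum from being realized by a degenerate limit accumulating on $\partial_\pm$. In the first case $\mu\in C^\infty(\mathring M)$ with $\mu\to\pm\infty$ on $\partial_\mp$, the divergence of $\mu$ directly penalizes competitors whose symmetric difference with $\Omega_0$ approaches $\partial_\pm$, so minimizing sequences stay in a fixed compact subset of $\mathring M$ automatically. In the second case, I would work with the signed distance $\rho$ to $\partial_-$ and the equidistant hypersurfaces $\Sigma_s=\{\rho=s\}$ foliating a one-sided collar. By continuity of mean curvature under the foliation, the strict inequality $\mu|_{\partial_-}>H_{\partial_-}$ propagates to $\mu|_{\Sigma_s}>H_{\Sigma_s}$ for $s\in[0,s_0]$ with $s_0$ small. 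I would then show, via the coarea formula applied to $\rho$ on $\{\rho\le s_0\}\setminus\Omega$, that for any $\Omega\in\Ccal_{\Omega_0}$ the replacement $\Omega\cup\{\rho\le s_0\}$ has strictly smaller brane action unless $\Omega$ already contains $\{\rho\le s_0\}$; hence I may assume $\Omega_k\supset\{\rho\le s_0\}$ for every $k$, giving uniform separation from $\partial_-$. The symmetric argument with the reversed inequality on $\partial_+$ handles the other side, producing the compact set $K$ needed above.

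For the regularity claim, the obtained minimizer $\Omega$ has a reduced boundary $\partial^*\Omega$ that, in any open $W\Subset\mathring M$, minimizes the weighted perimeter $U\mapsto\Hcal^{n-1}(\partial U\cap W)-\int_{U\cap W}\mu$ among compactly supported competitors. This is a prescribed-mean-curvature problem with smooth forcing $\mu$. By the regularity theory of De Giorgi--Federer--Tamanini for almost-minimizing sets, $\partial^*\Omega$ is a $C^{1,\alpha}$ hypersurface away from a closed singular set of Hausdorff dimension at most $n-8$; Schauder estimates applied to the Euler--Lagrange equation $H_{\partial\Omega}=\mu$ then promote this to $C^\infty$. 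Since $n\le 7$, the singular set is empty, which gives the smoothness of $\partial\Omega$ asserted in the lemma.
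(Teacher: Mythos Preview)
The paper does not supply its own proof of this lemma; it simply cites \cite[Proposition 2.1]{Zhu21} and moves on. Your sketch is the standard direct-method argument that underlies that reference: confinement via the barrier condition, BV compactness and lower semicontinuity of perimeter, and the De~Giorgi--Federer--Tamanini regularity theory together with Schauder estimates to get smoothness in dimensions $n\le 7$. So in spirit you are reproducing the argument the paper is pointing to rather than offering an alternative.

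Two small points where the sketch could be tightened. First, in the unbounded case $\mu\to\pm\infty$ on $\partial_\mp$, your sentence ``the divergence of $\mu$ directly penalizes competitors\ldots so minimizing sequences stay in a fixed compact subset of $\mathring M$ automatically'' is morally right but not quite automatic: $\mu$ is not integrable near $\partial M$, so one must actually exhibit, for any $\Omega$ with $\Omega\Delta\Omega_0$ reaching too close to $\partial_\pm$, a truncation that does not increase $\Acal^\mu_{\Omega_0}$; this is where a coarea/slicing argument analogous to your bounded-case treatment is used. (Alternatively, \cite{Zhu21} handles the smooth-$\mu$ case by first perturbing $\mu$ to one that blows up, thereby reducing to a single case.) Second, in your bounded-case confinement step, the claim that $\Omega\cup\{\rho\le s_0\}$ has \emph{strictly smaller} action for a \emph{fixed} $s_0$ is slightly too strong as stated; what the coarea argument actually gives is that for each $\Omega$ there is some slice level $s\in(0,s_0]$ at which the replacement does not increase the action, which is enough to assume $\Omega_k\supset\{\rho\le s_0'\}$ for a possibly smaller $s_0'$. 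Neither of these affects the overall correctness of your outline.
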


\begin{remark}
	In Lemma~\ref{mubbExReg}
	the smooth hypersurface $\Sigma:=\partial\Omega\setdiff\partial_-$
	is homologous to $\partial_+$. 
\end{remark}

\subsection{Variational properties}\

Let $\Omega$ be a smooth
$\mu$-bubble in a Riemannian band $(M^n,g;\partial_-,\partial_+)$, and let
 $\Sigma = \partial\Omega\setdiff\partial_-$. One may derive 
variation formulae
for $\Acal^\mu$ at $\Omega$---see Equation (2.3) in \cite{Zhu21} and the unnumbered equation above it. 
Specifically,
the first variation implies that the mean curvature of $\Sigma$ (with its outward normal $\nu$) is equal
to $\mu|_{\Sigma}$;
the second variation implies that 
the Jacobi operator 
\begin{equation}\label{Jdefn}
    J_\Sigma :=  -\Delta_\Sigma + \frac{1}{2} (R_\Sigma - R_g - \mu^2 - |\II|^2) - \nu(\mu)
\end{equation}
is non-negative,
where 
$\Delta_\Sigma$ and  $R_\Sigma$ are respectively the $g$-induced Laplacian and scalar curvature of $\Sigma$; $R_g$ is the scalar curvature of $(M,g)$; and
$\II$ is the second fundamental form of $\Sigma$. 

\begin{definition}\label{MhsDefn}
Let $\mu$ be a smooth function on a Riemannian manifold $(M^n,g)$.
A smooth two-sided hypersurface $\Scal\subset M$ with unit normal $\nu$
	is said to be a \emph{$\mu$-hypersurface} if its mean curvature taken with respect to $\nu$
	is equal to $\mu|_{\Scal}$.
\end{definition}

Clearly, \eqref{Jdefn} also makes sense when $\Sigma$ is replaced by a  $\mu$-hypersurface; this motivates the following notion of stability.
\begin{definition}\label{stabilityDefn}
	A $\mu$-hypersurface  $\Scal\subset M$ with unit normal $\nu$ is said to be
	\emph{stable} if $J_\Scal$ is non-negative on $C^\infty_0(\Scal)$.
\end{definition}

\begin{remark}
	If $\mu$ satisfies the barrier condition, then for any $\mu$-bubble $\Omega$ each connected component of
	$\partial\Omega\setdiff\partial_-$ with its 
	outward unit normal is a stable $\mu$-hypersurface.
\end{remark}

Let $\Scal$ be a $\mu$-hypersurface. Following \cite[Section 5.1]{Gromov21Four} we consider the operator 
\begin{equation}\label{Ldefn}
    L_\Scal:= -\Delta_\Scal +\frac{1}{2} (R_\Scal
    -R_+^\mu)
\end{equation} 
where
\begin{equation}\label{Rplus}
    R_+^\mu := 
    R_g + \frac{n}{n-1}\mu^2 - 2|\ed\mu|_g.
\end{equation}
In fact, $L_\Scal$ is obtained from applying the obvious inequalities
\begin{equation}\label{obviousIneq}
    -\partial_{\nu} \mu \le |\ed \mu|_g,
    \qquad|\II|^2 \ge\frac{1}{n-1}\mu^2
\end{equation}
to $J_\Scal$. One can easily verify that the following holds when $\Scal$ is stable:
\begin{equation}\label{JLcompare}
    L_\Scal\ge J_\Scal\ge 0.
\end{equation}

\begin{example}
	Consider $\S^2\times [t_1,t_2]$ $(0<t_1<t_2<\pi)$ equipped with the metric
	$g = (\sin^2t)g_{\S^2}  + \ed t^2$ where $g_{\S^2}$ is the standard metric on $\S^2$.
	This represents an annular region in the standard $\S^3$.
	Take $\mu(t) = 2\cot t$. It is easy to see that each $t$-level set $S_t$, with the unit normal $\nu = \partial_t$, is a $\mu$-hypersurface. Moreover, on $S_t$
	we have
	\[
		R_{g} = 6, \quad R_{S_t} = \frac{2}{\sin^2t}, \quad 
		|\II|^2 = 2\cot^2t,  \quad \nu(\mu)= \mu'(t) = -\frac{2}{\sin^2 t}. 
	\]
	In this case, both $J_{S_t}$ and $L_{S_t}$ reduce to $-\Delta_{S_t}$.
\end{example}

The following lemma is part of Theorem 3.6 in \cite{ZZ20}.
\begin{lemma}\label{sffLemma}
	Let $(M^n,g)$ be a closed Riemannian manifold with $2\le n\le 6$, and let $\mu\in C^\infty(M)$. 
	Let $\Scal$ be an immersed stable $\mu$-hypersurface
	 contained in an open subset $V\subset M$ and satisfying $\partial\Scal\cap V = \emptyset$.
	 If $\area(\Scal)\le C$ for some constant $C$, 
	 then there exists a constant $C_1 = C_1(M,n,\|\mu\|_{C^3(M)}, C)$ such that 
	\begin{equation}\label{curvatureBD}
		|\II|^2(x) \le \frac{C_1}{\dist_g^2(x,\partial V)} \quad \mbox{for all } x\in \Scal.
	\end{equation}
\end{lemma}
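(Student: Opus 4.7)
The plan is to argue by contradiction via a blow-up analysis that reduces the claim to the classical Bernstein-type rigidity for complete stable minimal hypersurfaces in Euclidean space.

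Suppose the estimate fails. Then one obtains a sequence of stable $\mu_k$-hypersurfaces $\Scal_k\subset V_k\subset M$ with $\|\mu_k\|_{C^3(M)}$ and $\area(\Scal_k)$ uniformly bounded, and points $x_k\in\Scal_k$ such that $|\II_k|(x_k)\cdot\dist_g(x_k,\partial V_k)\to\infty$. A standard point-picking argument lets us replace $x_k$ by points where the function $f_k(x):=|\II_k|(x)\,\dist_g(x,\partial V_k)$ is essentially maximized on a slightly smaller ball, ensuring that on the $g$-ball of radius $R_k r_k$ about $x_k$---where $r_k:=|\II_k|(x_k)^{-1}\to 0$ and $R_k\to\infty$---one has $|\II_k|\le 2 r_k^{-1}$. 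Now rescale: consider $\tilde g_k:=r_k^{-2}g$ pulled back via $\exp_{x_k}$ to a neighborhood of the origin in $T_{x_k}M\cong \R^n$, together with the corresponding hypersurfaces $\tilde\Scal_k$ and rescaled data $\tilde\mu_k:=r_k\,\mu_k$. Under this rescaling $|\II_{\tilde\Scal_k}|(x_k)=1$, the ambient metrics $\tilde g_k$ converge in $C^\infty_\loc$ to the flat Euclidean metric on $\R^n$, and $\|\tilde\mu_k\|_{C^3}\to 0$ on every fixed compact set.

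Combining the uniform $C^2$ bound from point-picking with uniform local area bounds---these follow from the monotonicity formula for hypersurfaces with bounded mean curvature, applied in small $g$-balls, and then rescaled---one extracts a subsequential $C^2_\loc$ limit $\tilde\Scal_\infty\subset \R^n$, an immersed complete hypersurface with $|\II_{\tilde\Scal_\infty}|(0)=1$. Since $\tilde\Scal_k$ has mean curvature $\tilde\mu_k\to 0$, the limit is minimal. Passing the stability inequality $J_{\Scal_k}\ge 0$ from \eqref{Jdefn} to the limit---noting that under the rescaling $R_{\tilde g_k}$, $\tilde\mu_k^2$, and $\tilde\nu(\tilde\mu_k)$ all carry a factor $r_k^2$ and hence vanish---yields the standard stability inequality for minimal hypersurfaces on $\tilde\Scal_\infty$. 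Because $n\le 6$, so that $\dim\tilde\Scal_\infty\le 5$, the theorem of Schoen--Simon--Yau (with the two-dimensional case handled by Fischer--Colbrie--Schoen or do~Carmo--Peng) forces $\tilde\Scal_\infty$ to be a hyperplane, contradicting $|\II_{\tilde\Scal_\infty}|(0)=1$.

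The most delicate step is the extraction of the limit as a well-behaved immersed hypersurface: one must rule out accumulation of sheets of $\tilde\Scal_k$ that could produce a limit with unbounded multiplicity or fail to be smooth. This is handled by combining the point-picking $C^2$ estimate---which gives a local graphical representation of $\tilde\Scal_k$ as a controlled number of smooth graphs over an approximate tangent plane---with the monotonicity-derived local area bound, which caps the number of sheets in a fixed $\tilde g_k$-ball. An alternative approach that avoids blow-ups entirely is to pair Simons' identity for $|\II|^2$ with the stability inequality tested against a suitable cut-off, yielding the Schoen--Simon--Yau $L^p$ curvature estimates directly on $\Scal$ and a pointwise bound via Moser iteration; this route is more computational but makes the dependence of $C_1$ on the stated quantities entirely explicit.
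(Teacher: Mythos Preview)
The paper does not prove this lemma at all: it simply records it as ``part of Theorem~3.6 in \cite{ZZ20}'' and moves on. Your blow-up argument is the standard proof of such curvature estimates for stable prescribed-mean-curvature hypersurfaces, and it is essentially the route taken in \cite{ZZ20}; the ingredients you list---point-picking, rescaling so that the lower-order terms $R_g$, $\mu^2$, $\nu(\mu)$ in \eqref{Jdefn} vanish, monotonicity for local area control, and the Schoen--Simon--Yau Bernstein theorem in the range $n-1\le 5$---are exactly the ones needed. Your remark that the global area bound $C$ enters through monotonicity (to cap the number of sheets in the limit) correctly explains the dependence $C_1=C_1(M,n,\|\mu\|_{C^3},C)$, and your alternative via Simons' identity plus Moser iteration is also a legitimate and well-known route. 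In short, there is no paper proof to compare against, and your sketch is sound.
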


\subsection{Comparison with a warped-product metric}\

Given a  Riemannian manifold $(N^{n-1}, g_N)$, an interval $I$ (with coordinate $t$) and a function $\varphi: I\rightarrow \R_+$, consider  the warped
product metric defined on $\hat N:=N\times I$
\begin{equation}\label{g_warp_defn}
    \hat g := \varphi(t)^2 g_N + \ed t^2.
\end{equation}
A standard calculation shows that the mean curvature on each slice $N\times\{t\}$ with respect to the $\partial_t$-direction is
\begin{equation}\label{g_warp_mean}
    \hat H(t) = (n-1)\frac{\varphi'(t)}{\varphi(t)};
\end{equation}
moreover, one may verify that the scalar curvature $R_{\hat g}$ of $\hat g$ satisfies 
\begin{equation}\label{Rhat}
    0  = - R_{\hat g} + \frac{1}{\varphi^2} R_N
            - \frac{n}{n-1}\hat H^2 - 2\frac{d\hat H}{d t},
\end{equation}
where $R_N$ is the scalar curvature of $(N,g_N)$.

Now suppose that $f: M\rightarrow \hat N$ is a smooth map from a Riemannian band $(M,g;\partial_-,\partial_+)$ to $\hat N$.
By pulling back all functions in \eqref{Rhat} via $f$ and adding the resulting equation with \eqref{Rplus}, we obtain
\begin{equation}\label{RplusEst}
    R_+^\mu  =\frac{1}{\varphi^2}R_{N} +  (R_g - R_{\hat g})
            + \frac{n}{n-1}(\mu^2 - \hat H^2) -2 \left(\partial_t\hat H + |\ed\mu|_g\right),
\end{equation}
where pull-back symbols are omitted for clarity.
The expression \eqref{RplusEst} will be useful in our analysis of $\mu$-bubbles.


\section{Rigidity of 3D spherical caps}\label{rigSection}

A spherical cap of radius $T\in(0,\pi)$ in the standard $\S^3$ may be represented by the closed ball 
$\B_T :=\{\xb\in \R^3: |\xb|\le T\}$ equipped with the metric
\begin{equation}\label{capMetricStd}
	\hat g =  \varphi(t)^2 g_{\S^2} + \ed t^2 \quad\mbox{with } \varphi(t) = \sin t,
\end{equation}
where $t\in [0,T]$ serves as the radial coordinate on $\B_T$ and $g_{\S^2}$ is the standard metric on $\S^2$.
For $t\in(0,T]$, let $S_t := \partial \B_t$. For $0<t_1<t_2\le T$, let  
$\B_{[t_1,t_2]}:=\B_{t_2} \setdiff \mathring\B_{t_1}$; similarly, let $\B_{(t_1,t_2]}:=\B_{t_2}\setdiff \B_{t_1}$. Given a domain $\Omega\subset \B_T$ with smooth boundary $\Sigma$,
the outward normal along $\Sigma$ with respect to the metric $\hat g$ will be denoted by $\hat \nu$.

The objective of this section and the next is to prove the following rigidity theorem.

\begin{theorem}\label{capRigThm}
	Let $(\B_T, \hat g)$ be the 3-dimensional spherical cap of radius $T\in (0,\pi)$. Suppose that $g$ is another Riemannian
	metric on $\B_T$ satisfying 
	\begin{equation}\label{capMetricCond}
		g\ge \hat g, \quad R_g\ge R_{\hat g} \mbox{ on } \B_T,\quad  \mbox{and } 
		H_g\ge H_{\hat g} = 2\cot T \mbox{ on } \partial\B_T.
	\end{equation}
	Then $g = \hat g$.
\end{theorem}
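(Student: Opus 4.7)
The plan is to combine Gromov's $\mu$-bubble machinery with a foliation-type rigidity argument using the comparison function $\hat H(t) = 2\cot t$, which is the mean curvature in $(\B_T,\hat g)$ of the geodesic spheres $S_t$. The goal is to produce an $\hat H$-bubble $\Omega$ in $(\B_T, g)$ whose boundary has a component $\Sigma_0$ projecting with nonzero degree onto $\S^2$ via the radial map $\xb \mapsto \xb/|\xb|$; from such a component, a stability plus Gauss--Bonnet argument will force local rigidity, which is then propagated to all of $\B_T$ by a foliation and an open--closed argument.

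Since $(\B_T, g)$ is not a Riemannian band, I would first excise the $\hat g$-geodesic ball $\B_\epsilon$ about the origin $O$ to form $M_\epsilon := \B_T \setdiff \mathring\B_\epsilon$ with $\partial_- = \partial\B_\epsilon$ and $\partial_+ = \partial\B_T$. The natural choice $\mu = \hat H$ satisfies $\mu \to +\infty$ as $t \to 0$, so the barrier condition at $\partial_-$ is automatic; but at $\partial_+$ the hypothesis $H_g \ge 2\cot T = \hat H(T)$ is only non-strict, so \eqref{barrierDef} may fail there. I would therefore replace $\hat H$ by a smooth $\mu_\epsilon$ obtained by perturbing the radial profile slightly downward near $t = T$, arranged so that $\mu_\epsilon|_{\partial_+} < H_g|_{\partial_+}$ holds strictly and $\mu_\epsilon \to \hat H$ in $C^\infty_\loc$ away from the endpoints as $\epsilon \to 0$. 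Lemma~\ref{mubbExReg} then yields a smooth $\mu_\epsilon$-bubble $\Omega_\epsilon$ in $M_\epsilon$.

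The crux is the $\epsilon \to 0$ limit, where two things can go wrong: the good component of $\partial\Omega_\epsilon$ may slip into a shrinking neighborhood of $O$, or several good components may collide and cancel. To rule this out, I would apply the warped-product comparison \eqref{RplusEst} to the identity map $(M_\epsilon, g) \to (\B_T, \hat g)$: combined with $g \ge \hat g$ and $R_g \ge R_{\hat g}$, this gives a lower bound for $R_+^{\mu_\epsilon}$ controlled by $\varphi^{-2} R_{\S^2} = 2/\sin^2 t$, which via \eqref{Ldefn}--\eqref{JLcompare} turns the stability inequality on a good component $\Sigma_0^\epsilon$ into a quantitative eigenvalue estimate. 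Testing against the constant function and invoking Gauss--Bonnet then forces $\Sigma_0^\epsilon$ to be a topological sphere sitting inside $\B_{[\delta, T]}$ for some $\delta > 0$ independent of $\epsilon$. Coupled with an area bound extracted from $\Acal^{\mu_\epsilon}_{\Omega_0}(\Omega_\epsilon) \le 0$, Lemma~\ref{sffLemma} gives a uniform $|\II|$-bound on $\Sigma_0^\epsilon$, and a subsequence converges smoothly to a component $\Sigma_0$ of the boundary of a limiting $\hat H$-bubble $\Omega$ in $(\B_T, g)$.

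From $(\Omega, \Sigma_0)$, the rigidity portion proceeds by saturating the inequalities \eqref{obviousIneq} and \eqref{JLcompare} along $\Sigma_0$: this forces $\Sigma_0$ to be totally umbilic with the correct mean curvature, forces $g|_{\Sigma_0} = \hat g|_{\Sigma_0}$ (so $\Sigma_0$ is isometric to some round $S_{t_0}$), and equates $R_g$ with $R_{\hat g}$ along $\Sigma_0$. Foliating a neighborhood of $\Sigma_0$ by nearby $\hat H$-hypersurfaces then upgrades these equalities to $g = \hat g$ on a one-sided neighborhood of $\Sigma_0$, and a standard open--closed continuation in $\B_T$, using minimality of $\Omega$ and basic regularity from geometric measure theory, extends the equality to all of $\B_T$. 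The main obstacle I anticipate is the coupling between the barrier fix and the Gauss--Bonnet trap: $\mu_\epsilon$ must be perturbed enough at $\partial_+$ to satisfy \eqref{barrierDef} despite the non-strict hypothesis $H_g \ge 2\cot T$, yet little enough that the sign of $R_+^{\mu_\epsilon} - \varphi^{-2} R_{\S^2}$ survives uniformly in $\epsilon$ on the part of $M_\epsilon$ where the good component is expected to lie.
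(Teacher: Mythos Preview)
Your strategy matches the paper's: perturb $\hat H$ to get $\mu_\epsilon$-bubbles in excised bands $M_\epsilon$, trap the good component away from the origin via Gauss--Bonnet, pass to a limit $(\Omega, \Sigma_0)$, and then run saturation--foliation--continuation. One step is wrong as written, though.

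The barrier condition at $\partial_-$ is \emph{not} automatic. On the $\hat g$-sphere $S_\epsilon$ the function $\hat H$ takes the finite value $2\cot\epsilon$; the $g$-mean curvature of that sphere (with the normal pointing into the band) is of the same order, and for $g = \hat g$ the two coincide exactly, so no strict inequality holds. The paper fixes this by perturbing near the origin as well: it sets $\mu_\epsilon = \hat H + \epsilon\beta$ with $\beta(t) = \cot t$ for small $t$, and takes $\partial_-$ to be a small \emph{$g$-geodesic} sphere $S(r_\epsilon, g)$; then $\mu_\epsilon \sim (2+\epsilon)/t \ge (2+\epsilon)/r_\epsilon > 2/r_\epsilon \sim H_{S(r_\epsilon,g)}$ (using $t\le r_\epsilon$ from $g\ge\hat g$) supplies the barrier. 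Your perturbation, acting only near $t = T$, does not. Separately, the trap ``$\Sigma_0^\epsilon \subset \B_{[\delta,T]}$'' requires more than the bare Gauss--Bonnet bound $\int R_+^{\mu_\epsilon}\le 8\pi$: the paper extracts a quantitative gap $\int_{\Sigma_0^\epsilon} 2\varphi^{-2}\,d\sigma_{\hat g} - 2\int_{\Sigma_0^\epsilon} |\Phi^* d\sigma_{\S^2}| \ge A_0 > 0$ from the geometric observation that a $\Sigma_0^\epsilon$ reaching into $\B_{t_*}$ must somewhere make a definite angle with the level spheres $S_t$, and this gap beats the $O(\epsilon)$ error in $R_+^{\mu_\epsilon}$ to yield the contradiction.
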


Our proof begins by establishing a key ingredient: 
certain stable $\mu$-hypersurfaces are necessarily $t$-level sets in $\B_T$
(Proposition~\ref{levelsetProp}), the justification of which hinges on
an integral inequality (see \eqref{Rplus_intEst}) involving an application of 
the Gauss--Bonnet formula.
This result is followed by a classical foliation lemma (Lemma~\ref{foliationLemma}).
Under a suitable `minimality' assumption (Assumption~\ref{relativeCCassum}), 
each leaf in that foliation
turns out to be stable, which implies local rigidity of the metric (Lemma~\ref{localRigLemma}).
Section~\ref{rigSection} culminates at Proposition~\ref{rigidityProp}, which justifies 
Theorem~\ref{capRigThm} assuming the existence of an `initial' minimizer
(Assumption~\ref{relativeCCassum}); this assumption will be verified in 
Section~\ref{Sec_initMB} via a perturbation argument (see Proposition~\ref{initProp}).

\subsection{Stable $\mu$-hypersurfaces and $t$-level sets}\

The metric \eqref{capMetricStd} is of the form \eqref{g_warp_defn}; thus, 
\eqref{g_warp_mean} applies to give
\begin{equation}\label{HhatDef}
	\hat H(t) = 2\cot t.
\end{equation}
It will be useful to define, for $\mu = \mu(t)$, the function
(cf. the last two terms in \eqref{RplusEst})
\begin{equation}\label{Zdefn}
	\begin{alignedat}{1}
	Z_{\mu}(t)& := \frac{3}{2}(\mu(t)^2 - \hat H(t)^2) -2 (\hat H'(t) - \mu'(t))\\
			&  = \frac{3}{2}\mu(t)^2 + 2\mu'(t) - 6\cot^2 t + \frac{4}{\sin^2t}.
	\end{alignedat}		
\end{equation}
Notice, in particular, that $Z_{\hat H}(t)\equiv 0$.
As $t$ is a coordinate on $\B_T$, we may regard $\mu$ and $Z_\mu$ as functions
defined on $\B_T\setdiff\{\zerob\}$.

\begin{lemma}\label{RplusLemma}
	Let $\mu(t)$ be a smooth, decreasing function defined on $(0, T]$, and let 
	$g$ be a Riemannian metric on $\B_T$ satisfying \eqref{capMetricCond}. 
	At a point $q\in \B_T$, if $Z_\mu \ge 0$,
	 then $R_+^{\mu} \ge 2/\varphi^2 >0$.
\end{lemma}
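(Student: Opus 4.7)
The plan is to start from the general identity \eqref{RplusEst}, specialize it to the present setting, and then assemble all the hypotheses in a single chain of inequalities. In our case the model is $(\B_T,\hat g)$ with $n=3$, $N=\S^2$, and $\varphi(t)=\sin t$, so $R_N=2$ and $\tfrac{n}{n-1}=\tfrac32$. Substituting into \eqref{RplusEst} gives
\begin{equation*}
    R_+^\mu \;=\; \frac{2}{\varphi^2} \;+\; (R_g - R_{\hat g}) \;+\; \frac{3}{2}(\mu^2 - \hat H^2) \;-\; 2\hat H'(t) \;-\; 2|\ed\mu|_g.
\end{equation*}
The first step is therefore algebraic: recognize that the last four terms on the right, apart from the $|\ed\mu|_g$ piece, already reproduce $Z_\mu(t)$ up to replacing $-2|\ed\mu|_g$ by $+2\mu'(t)$.

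The only nontrivial step is to bound $|\ed\mu|_g$ from above by $-\mu'(t)$ so that this replacement is justified. Here I would use the hypothesis $g\ge \hat g$. Since $\mu$ depends only on $t$, we have $\ed\mu=\mu'(t)\,\ed t$, and so $|\ed\mu|_g=|\mu'(t)|\cdot|\ed t|_g$. The key observation is that a pointwise inequality $g\ge\hat g$ on $TM$ dualizes to the reverse inequality $g^{-1}\le\hat g^{-1}$ on $T^*M$, and hence $|\alpha|_g\le |\alpha|_{\hat g}$ for every 1-form $\alpha$. Applied to $\alpha=\ed t$ and using $|\ed t|_{\hat g}=1$ (which follows from the warped-product form \eqref{capMetricStd}), this yields $|\ed t|_g\le 1$. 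Combined with the monotonicity assumption $\mu'(t)\le 0$, this gives
\begin{equation*}
    -2|\ed\mu|_g \;=\; 2\mu'(t)\,|\ed t|_g \;\ge\; 2\mu'(t).
\end{equation*}

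With this bound in hand, the remaining work is just bookkeeping. Plugging back into the expression for $R_+^\mu$ and recognizing the definition of $Z_\mu(t)$ in \eqref{Zdefn} gives
\begin{equation*}
    R_+^\mu \;\ge\; \frac{2}{\varphi^2} + (R_g - R_{\hat g}) + Z_\mu(t).
\end{equation*}
Now invoke the two remaining hypotheses: $R_g\ge R_{\hat g}$ from \eqref{capMetricCond}, and $Z_\mu(q)\ge 0$ by assumption. Both extra terms are nonnegative at $q$, so $R_+^\mu(q)\ge 2/\varphi^2>0$, which is the desired conclusion.

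The main (only) obstacle worth flagging is the dualization step $g\ge\hat g \Longrightarrow |\ed t|_g\le |\ed t|_{\hat g}$; everything else is substitution. This is standard linear algebra on each cotangent space, but it is essential because without it the sign of $-2|\ed\mu|_g$ would work against us and one could not absorb that term into $Z_\mu$.
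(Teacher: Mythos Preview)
Your proposal is correct and follows essentially the same approach as the paper: both start from \eqref{RplusEst}, use $g\ge\hat g$ to obtain $|\ed\mu|_g\le|\ed\mu|_{\hat g}=-\mu'(t)$, and then invoke $R_g\ge R_{\hat g}$ and $Z_\mu\ge 0$ to conclude. The only difference is that you spell out the dualization step $g\ge\hat g\Rightarrow |\ed t|_g\le|\ed t|_{\hat g}$ explicitly, whereas the paper simply asserts $|\ed\mu|_g\le|\ed\mu|_{\hat g}$ in one line.
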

\begin{proof}
On the RHS of \eqref{RplusEst}, the second term is non-negative by assumption.
 Moreover, $g\ge \hat g$ implies
\begin{equation}
    |\ed\mu|_g \le |\ed\mu|_{\hat g} =  |\partial_t\mu| = -\mu'(t).
\end{equation}
Substituting this in the last term of \eqref{RplusEst} and noticing that $R_{\S^2} = 2$, we obtain 
the desired inequality.
\end{proof}

Now let $\Sigma_0$ be a hypersurface in $\B_{(0, T]}$, and let $\Phi$ denote the projection 
map from $\Sigma_0$ to $\S^2$, namely,
\begin{equation}\label{projPhiDef}
	\Phi: \Sigma_0\hookrightarrow \B_{(0, T]} \cong (0, T]\times\S^2\xrightarrow{} \S^2.
\end{equation}

\begin{lemma}\label{areaCompLemma}
	Let $d\sigma_{\hat g}$ be the area form on $\Sigma_0$ induced by $\hat g$.
	 We have 
	\begin{equation}
		d\sigma_{\hat g} \ge \varphi^2 |\Phi^* d\sigma_{\S^2}|
	\end{equation}
	where the absolute-value sign is put to eliminate the ambiguity of orientation.
\end{lemma}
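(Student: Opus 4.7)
The inequality is a pointwise statement about densities on $\Sigma_0$, so my plan is to do a purely fiberwise linear-algebra comparison at each $p \in \Sigma_0$. The key geometric input is that, with respect to $\hat g = \varphi(t)^2 g_{\S^2} + \ed t^2$, the map $\Phi$ is the composition of the $\hat g$-orthogonal projection onto the slice $S_{t(p)}$ with the $\varphi^{-1}$-rescaling that identifies $(S_{t(p)}, \hat g|_{S_{t(p)}})$ isometrically with $(\S^2, g_{\S^2})$ (up to the factor $\varphi$). Concretely, I would decompose any $v \in T_pM$ as $v = v^H + (\ed t(v))\,\partial_t$ with $v^H$ tangent to $S_{t(p)}$; the two summands are $\hat g$-orthogonal, $\partial_t$ lies in $\ker(\ed \Phi)$, and the rescaling of the induced metric gives
\[
|\ed\Phi(v)|_{g_{\S^2}} \;=\; \varphi^{-1}\,|v^H|_{\hat g} \;\le\; \varphi^{-1}\,|v|_{\hat g}.
\]

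Next I would pick an arbitrary $\hat g$-orthonormal basis $\{e_1, e_2\}$ of $T_p\Sigma_0$. By definition $\ed\sigma_{\hat g}(e_1, e_2) = 1$, while
\[
|\Phi^* \ed\sigma_{\S^2}(e_1,e_2)| \;=\; |\ed\Phi(e_1)\wedge \ed\Phi(e_2)|_{g_{\S^2}} \;\le\; |\ed\Phi(e_1)|_{g_{\S^2}}\cdot|\ed\Phi(e_2)|_{g_{\S^2}} \;\le\; \varphi^{-2},
\]
where the first inequality is the standard wedge estimate on a $2$-dimensional inner-product space and the second uses the rescaling identity above. Multiplying by $\varphi^2$ yields the pointwise density inequality, which integrates (or rather already gives) the claimed bound of area forms.

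I do not anticipate a real obstacle: the argument is elementary multilinear algebra. The absolute value in the statement is there precisely because $\Phi|_{\Sigma_0}$ need not be orientation-preserving or even immersive everywhere; at points where $\ed\Phi|_{T_p\Sigma_0}$ drops rank, the right-hand side simply vanishes and the inequality is automatic, while at the remaining points the estimate above applies verbatim.
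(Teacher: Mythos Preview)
Your proof is correct and follows essentially the same idea as the paper's: both rest on the pointwise metric comparison $\Phi^*(g_{\S^2}) \le \varphi^{-2}\,\hat g|_{\Sigma_0}$ (which you phrase as the norm inequality $|\ed\Phi(v)|_{g_{\S^2}} \le \varphi^{-1}|v|_{\hat g}$), from which the area-form inequality is immediate. The paper simply writes this metric comparison in local coordinates and says ``the conclusion follows,'' whereas you make the passage to area forms explicit via the wedge estimate on an orthonormal basis; the difference is purely cosmetic.
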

\begin{proof}
Let $(\theta_\alpha)$ $(\alpha = 1,2)$ be local coordinates on $\S^2$, and write $g_{\S^2} = h_{\alpha\beta}\ed\theta_\alpha \ed\theta_\beta$. We get
    \begin{equation}
        \Phi^*(g_{\S^2}) = h_{\alpha\beta}\ed\theta_\alpha\ed\theta_\beta \le \frac{1}{\varphi^2} \left(\ed t^2 + \varphi^2 h_{\alpha\beta}\ed \theta_\alpha \ed\theta_\beta\right) = \frac{1}{\varphi^2} \hat g_{\Sigma_0},
    \end{equation}
    where the functions and forms are restricted to $\Sigma_0$. The conclusion follows.
\end{proof}

\begin{prop}\label{levelsetProp}
Let $\mu(t)$ be a smooth, decreasing function defined on $(0,T]$.
   Suppose that $\Sigma_0\hookrightarrow(\B_T\setdiff\{\zerob\}, g)$ is 
   a stable, closed $\mu$-hypersurface with
   unit normal $\nu$,
   where $g$ satisfies $g\ge \hat g$ and $R_g\ge R_{\hat g}$.
   Moreover, suppose that $Z_\mu\ge 0$ on $\Sigma_0$ and that
   the projection $\Phi$ from $\Sigma_0$ to $\S^2$ has nonzero degree.
   Then
\begin{enumerate}[\rm(a)] 
\item{$\Sigma_0 = S_\tau$ for some $\tau\in (0, T]$; 
		\label{levelset_ls}}
\item{  $J_{\Sigma_0} = L_{\Sigma_0} = -\Delta_{\Sigma_0}$ (see \eqref{Jdefn}, \eqref{Ldefn});
		\label{levelset_jacobi}}
\item{$\Sigma_0\subset (\B_T, g)$ is umbilic with constant mean curvature $\mu(\tau)$;
		\label{levelset_umb}}
\item{$g(p) = \hat g(p)$ at all points $p\in \Sigma_0$; in particular, 
		$g_{\Sigma_0} = \hat g_{\Sigma_0} = (\sin^2\tau)  g_{\S^2}$;
		\label{levelset_metric}}
\item{on $\Sigma_0$, $\nu = \partial_t$;
		\label{levelset_normal}}
 \item{on $\Sigma_0$, $R_+^\mu = 2/\varphi^2$ and $Z_\mu = 0$.
 		\label{levelset_Z}}
\end{enumerate}
\end{prop}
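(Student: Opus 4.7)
The plan is to extract from the stability of $\Sigma_0$ an integral inequality, chain it against the Gauss--Bonnet theorem and Lemmas~\ref{RplusLemma} and~\ref{areaCompLemma}, and use the hypothesis $\deg \Phi \neq 0$ to force every inequality in the resulting chain to be an equality. Conclusions (a)--(f) will then be read off directly from saturation.

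Stability gives $J_{\Sigma_0} \geq 0$ and, via \eqref{JLcompare}, $L_{\Sigma_0} \geq 0$. Let $u > 0$ be a first eigenfunction of $L_{\Sigma_0}$ with eigenvalue $\lambda \geq 0$. Setting $w = \log u$ and integrating the pointwise identity $-\Delta_{\Sigma_0} w - |\nabla w|^2 + \tfrac{1}{2}(R_{\Sigma_0} - R_+^\mu) = \lambda$ over the closed surface $\Sigma_0$ yields $\int_{\Sigma_0} R_{\Sigma_0}\,d\sigma_g \geq \int_{\Sigma_0} R_+^\mu\,d\sigma_g$, with equality only if $\lambda = 0$ and $u$ is constant. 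On the left, Gauss--Bonnet gives $\int R_{\Sigma_0}\,d\sigma_g = 4\pi\chi(\Sigma_0)$. On the right, Lemma~\ref{RplusLemma} gives $R_+^\mu \geq 2/\varphi^2$; $g \geq \hat g$ gives $d\sigma_g \geq d\sigma_{\hat g}$; Lemma~\ref{areaCompLemma} gives $d\sigma_{\hat g} \geq \varphi^2|\Phi^*d\sigma_{\S^2}|$; and the degree hypothesis gives $\int_{\Sigma_0}|\Phi^*d\sigma_{\S^2}| \geq 4\pi|\deg\Phi| \geq 4\pi$. Chaining these produces
\[
    4\pi\chi(\Sigma_0) \;\geq\; \int_{\Sigma_0} R_+^\mu \, d\sigma_g \;\geq\; \int_{\Sigma_0} \tfrac{2}{\varphi^2}\, d\sigma_{\hat g} \;\geq\; 8\pi.
\]
Since $\Sigma_0$ is a closed orientable surface (it carries a unit normal), $\chi(\Sigma_0) \leq 2$, so $\chi(\Sigma_0) = 2$ and every inequality above is an equality.

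I then harvest (a)--(f) from these equalities. Saturation of the stability estimate gives $\lambda = 0$ and $u \equiv \const$, so $L_{\Sigma_0}(1) = 0$ pointwise, i.e.\ $R_{\Sigma_0} \equiv R_+^\mu$ and $L_{\Sigma_0} = -\Delta_{\Sigma_0}$. Combining this with $L \geq J$ pointwise and $\int_{\Sigma_0} J_{\Sigma_0}(1) \geq 0$ from stability forces $(L - J)(1) \equiv 0$; unpacking the two non-negative terms hidden in \eqref{obviousIneq} yields $|\II|^2 = \mu^2/2$ (umbilicity with constant mean curvature $\mu|_{\Sigma_0}$, giving (c)) and $\nu(\mu) = -|\ed\mu|_g$, whence $J_{\Sigma_0} = L_{\Sigma_0} = -\Delta_{\Sigma_0}$, giving (b). Saturation in Lemma~\ref{RplusLemma} forces $R_g = R_{\hat g}$, $Z_\mu = 0$, and $|\ed\mu|_g = -\mu'(t)$ along $\Sigma_0$; the first two give (f), and since $\ed\mu = \mu'\,\ed t$, the third gives $g^{tt} \equiv 1$. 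Saturation in Lemma~\ref{areaCompLemma} forces $\ed t|_{T\Sigma_0} \equiv 0$, so $\Sigma_0$ is contained in, and by connectedness equals, a single level set $S_\tau$ with $\nu = \partial_t$, giving (a) and (e). Finally, $d\sigma_g \equiv d\sigma_{\hat g}$ on $\Sigma_0$ combined with $g|_{T\Sigma_0} \geq \hat g|_{T\Sigma_0}$ gives pointwise equality of the induced $2$-metrics; together with $g^{tt} = 1$ and $g - \hat g \geq 0$, a short block-matrix computation then pins down $g \equiv \hat g$ at every point of $\Sigma_0$, which is (d).

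The most delicate step I anticipate is the last one: translating the three independent scalar equalities---induced area, $|\ed t|_g$, and the ambient positive-semidefiniteness of $g - \hat g$---into the pointwise identity $g = \hat g$ along $\Sigma_0$ requires the correct accounting for a PSD $3\times 3$ matrix whose kernel is forced to be $2$-dimensional, which is elementary but must be handled with care. Everything else is straightforward bookkeeping once the saturation chain is in place.
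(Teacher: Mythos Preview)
Your argument is correct and follows essentially the same strategy as the paper: chain stability, Gauss--Bonnet, Lemma~\ref{RplusLemma}, Lemma~\ref{areaCompLemma}, and the degree hypothesis into $4\pi\chi(\Sigma_0)\ge\int R_+^\mu\,d\sigma_g\ge 8\pi$, then harvest (a)--(f) from saturation. One pleasant minor variation: you extract (a) from saturation in Lemma~\ref{areaCompLemma} (getting $\ed t|_{T\Sigma_0}=0$ directly), whereas the paper routes through $\nu\parallel\nabla_g\mu$ and the strict monotonicity of $\mu$ (packaged as Lemma~\ref{levelSetLemma}).

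There is one small slip in your ordering: saturation in Lemma~\ref{areaCompLemma} gives $\Sigma_0=S_\tau$ but does \emph{not} by itself give $\nu=\partial_t$, since the $g$-unit normal to $S_\tau$ need not be $\partial_t$ when $g_{t\alpha}\ne 0$. The paper closes this gap as follows: from $-\nu(\mu)=|\ed\mu|_g=|\ed\mu|_{\hat g}=-\mu'$ one obtains $\ed t(\nu)=1$, so $\nu=\partial_t+X$ with $X\in TS_\tau$, and then $1=|\nu|_g^2\ge|\nu|_{\hat g}^2=1+|X|_{\hat g}^2$ forces $X=0$. Alternatively, your own block-matrix computation already delivers $g=\hat g$ pointwise along $S_\tau$, whence $\nu=\pm\partial_t$ with the sign fixed by $\nu(\mu)\le 0$; you just need to invoke that computation \emph{before} claiming (e), not after.
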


We prepare our proof of this proposition with the following two lemmas.
\begin{lemma}\label{sphereLemma}
    Under the assumption of Proposition~\ref{levelsetProp}, 
    $\Sigma_0$ is homeomorphic to $\S^2$.
\end{lemma}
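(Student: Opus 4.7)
My plan is to deduce $\chi(\Sigma_0)\ge 2$ by combining the stability of $\Sigma_0$, the Gauss--Bonnet theorem, and the nonzero-degree hypothesis on $\Phi$. Since $\Sigma_0$ carries a unit normal inside the orientable manifold $\B_T\setdiff\{\zerob\}$, it is two-sided and hence orientable; as a closed orientable surface it then satisfies $\chi(\Sigma_0)\le 2$, so the bound $\chi(\Sigma_0)\ge 2$ will force $\Sigma_0\cong\S^2$.

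The first step is to extract an integral inequality from stability. By \eqref{JLcompare}, stability gives $L_{\Sigma_0}\ge 0$ as a symmetric operator; applying its quadratic form to the constant test function $\phi\equiv 1$ (which is legitimate since $\Sigma_0$ is closed) yields
\[
\int_{\Sigma_0}(R_{\Sigma_0}-R_+^\mu)\, d\sigma_g\;\ge\; 0.
\]
By Gauss--Bonnet the first term equals $4\pi\chi(\Sigma_0)$. For the second term, $\mu$ is decreasing and $Z_\mu\ge 0$ on $\Sigma_0$ by hypothesis, so Lemma~\ref{RplusLemma} gives $R_+^\mu\ge 2/\varphi^2$ pointwise on $\Sigma_0$. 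Hence
\[
4\pi\chi(\Sigma_0)\;\ge\; 2\int_{\Sigma_0}\frac{1}{\varphi^2}\, d\sigma_g.
\]

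To obtain a universal lower bound for the right-hand side I would use the projection $\Phi$ together with the two area comparisons already available. From $g\ge \hat g$ one has $d\sigma_g\ge d\sigma_{\hat g}$, and Lemma~\ref{areaCompLemma} provides $d\sigma_{\hat g}\ge \varphi^2\,|\Phi^*d\sigma_{\S^2}|$; chaining these,
\[
\int_{\Sigma_0}\!\frac{1}{\varphi^2}\, d\sigma_g\;\ge\; \int_{\Sigma_0}|\Phi^*d\sigma_{\S^2}|\;\ge\; \left|\int_{\Sigma_0}\Phi^*d\sigma_{\S^2}\right|\;=\; 4\pi|\deg\Phi|\;\ge\; 4\pi,
\]
where the last step uses $\deg\Phi\neq 0$. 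Combining with the previous display gives $\chi(\Sigma_0)\ge 2$, hence $\Sigma_0\cong\S^2$. I do not anticipate a genuine obstacle here; the argument is just a chain of inequalities, and the main care required is only to verify that every inequality in sight ($g\ge\hat g$, $R_g\ge R_{\hat g}$, $|\II|^2\ge \tfrac{1}{2}\mu^2$, $-\nu(\mu)\le|\ed\mu|_g$, and $|\deg\Phi|\ge 1$) points in the direction needed to pass from $L_{\Sigma_0}\ge J_{\Sigma_0}\ge 0$ to the final $\chi(\Sigma_0)\ge 2$. This setup also previews the proofs of parts (\ref{levelset_ls})--(\ref{levelset_Z}) of Proposition~\ref{levelsetProp}, which amount to reading off the equality cases of each step above.
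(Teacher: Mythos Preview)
Your proof is correct but differs from the paper's in two small ways. First, to obtain $\int_{\Sigma_0}(R_{\Sigma_0}-R_+^\mu)\,d\sigma_g\ge 0$ you test the nonnegative operator $L_{\Sigma_0}$ against the constant function, whereas the paper takes a positive principal eigenfunction $u$ of $L_{\Sigma_0}$ and integrates the identity $\div(u^{-1}\nabla u)=-u^{-2}|\nabla u|^2+u^{-1}\Delta u$; your route is the more direct one, while the paper's is already positioned for reading off the pointwise equality $R_{\Sigma_0}=R_+^\mu$ in the proof of Proposition~\ref{levelsetProp} (though your route yields this too, since $\langle L_{\Sigma_0}1,1\rangle=0$ with $L_{\Sigma_0}\ge 0$ forces $L_{\Sigma_0}1=0$). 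Second, you invoke the degree hypothesis together with Lemma~\ref{areaCompLemma} to push all the way to $\chi(\Sigma_0)\ge 2$, whereas the paper stops at $\chi(\Sigma_0)>0$ using only $R_+^\mu>0$ from Lemma~\ref{RplusLemma} and then appeals to the classification of closed orientable surfaces. The paper's minimalism here is deliberate: it underpins Remark~\ref{ZepsilonRelaxRmk}, which weakens the hypothesis $Z_\mu\ge 0$ to merely $R_+^\mu>0$ --- a relaxation under which your intermediate bound $R_+^\mu\ge 2/\varphi^2$ is no longer available, so for that remark one has to fall back on the simpler $\chi>0$ argument.
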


\begin{proof}
    By stability, the operator $L_{\Sigma_0}$ defined by \eqref{Ldefn} is non-negative. Let $u\in C^\infty(\Sigma_0)$ be a principal eigenfunction of $L_{\Sigma_0}$, and let $\lambda_1\ge 0$ be the corresponding eigenvalue. By the maximum principle, we can always choose $u$ to be strictly positive. Thus,
    \begin{equation}\label{Lfirstev}
        -u^{-1}\Delta_{\Sigma_0} u + \frac{1}{2}(R_{\Sigma_0} - R_+^\mu) = \lambda_1\ge 0.
    \end{equation}
    Expanding 
    \begin{equation}
    \div(u^{-1}\nabla_{\Sigma_0} u) = - u^{-2}|\nabla_{\Sigma_0} u|^2 + u^{-1}\Delta_{\Sigma_0} u,
    \end{equation} 
    applying it in the previous equation and integrating over $\Sigma_0$, we obtain
    \begin{equation}\label{RminusRplusIntEst}
        \frac{1}{2}\int_{\Sigma_0} (R_{\Sigma_0} - R_+^\mu) d\sigma_g
         = \int_{\Sigma_0} (\lambda_1 + u^{-2}|\nabla_{\Sigma_0}u|^2) d\sigma_g\ge 0.
    \end{equation}
    From \eqref{RminusRplusIntEst}, the Gauss--Bonnet formula, and Lemma~\ref{RplusLemma}, we deduce
    \begin{equation}\label{GaussBonnet}
        4\pi\chi(\Sigma_0) = \int_{\Sigma_0} R_{\Sigma_0} d\sigma_g \ge \int_{\Sigma_0} R_+^\mu d\sigma_g > 0;
    \end{equation}
     since $\Sigma_0$ is a connected oriented surface, it is homeomorphic to $\S^2$.
\end{proof}

\begin{remark}\label{ZepsilonRelaxRmk}
	Lemma~\ref{sphereLemma} remains true if  we assume $R^\mu_+>0$ instead of
	$Z_\mu\ge 0$ on $\Sigma_0$.
\end{remark}

\begin{lemma}\label{levelSetLemma}
	Under the assumption of Proposition~\ref{levelsetProp}, if 
	$J_{\Sigma_0} = L_{\Sigma_0}=-\Delta_{\Sigma_0}$, then
	\begin{enumerate}[\rm (i)]
		\item $\Sigma_0\subset (\B_T,g)$ is umbilic;
		\item $\Sigma_0 = S_\tau$ for some $\tau\in (0,T]$;
		\item  $\mu|_{\Sigma_0} =  \mu(\tau)$.
	\end{enumerate}
\end{lemma}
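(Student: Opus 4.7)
My plan is to interpret the hypothesis $J_{\Sigma_0}=L_{\Sigma_0}=-\Delta_{\Sigma_0}$ as forcing equalities in the two estimates \eqref{obviousIneq} used to derive $L_\Sigma$ from $J_\Sigma$, and then to push the resulting rigidity through a sharp Gauss--Bonnet/degree comparison.

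First, comparing \eqref{Jdefn} and \eqref{Ldefn} term by term, the difference $L_{\Sigma_0}-J_{\Sigma_0}$ is exactly
\[
\tfrac{1}{2}\bigl(|\II|^2-\tfrac{1}{n-1}\mu^2\bigr) + \bigl(\nu(\mu)+|\ed\mu|_g\bigr),
\]
and each summand is non-negative by \eqref{obviousIneq}. The assumed equality $J_{\Sigma_0}=L_{\Sigma_0}$ therefore forces both summands to vanish pointwise on $\Sigma_0$. Vanishing of the first summand gives $|\II|^2=\tfrac{1}{n-1}\mu^2$; since $\tr_{\Sigma_0}\II=\mu|_{\Sigma_0}$, the Cauchy--Schwarz equality characterization shows $\Sigma_0$ is umbilic in $(\B_T,g)$ with $\II=\tfrac{\mu}{n-1}g_{\Sigma_0}$, proving (i). Vanishing of the second summand gives $\nu(\mu)=-|\ed\mu|_g$, which will be used later to identify the direction of $\nu$.

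Next, the assumption $L_{\Sigma_0}=-\Delta_{\Sigma_0}$ means $R_{\Sigma_0}=R_+^\mu$ pointwise on $\Sigma_0$. Integrating and using Lemma~\ref{sphereLemma} (whose conclusion $\Sigma_0\cong\S^2$ is available since, in the present setting, $R_+^\mu\ge 2/\varphi^2>0$ by Lemma~\ref{RplusLemma} and Remark~\ref{ZepsilonRelaxRmk}), the Gauss--Bonnet formula yields
\[
8\pi=\int_{\Sigma_0}R_{\Sigma_0}\,d\sigma_g=\int_{\Sigma_0}R_+^\mu\,d\sigma_g\ge \int_{\Sigma_0}\frac{2}{\varphi^2}\,d\sigma_g.
\]
Now I would chain this with two further comparisons: the assumption $g\ge\hat g$ gives $d\sigma_g\ge d\sigma_{\hat g}$, and Lemma~\ref{areaCompLemma} gives $d\sigma_{\hat g}\ge \varphi^2|\Phi^* d\sigma_{\S^2}|$. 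Hence
\[
8\pi\ge \int_{\Sigma_0}\frac{2}{\varphi^2}\,d\sigma_g\ge 2\int_{\Sigma_0}|\Phi^*d\sigma_{\S^2}|\ge 2\cdot 4\pi|\deg\Phi|\ge 8\pi,
\]
where the last inequality uses the nonzero-degree assumption. Consequently every intermediate inequality is an equality.

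Finally I would extract the geometric content of these equalities. Equality in Lemma~\ref{areaCompLemma} forces $\Phi^*g_{\S^2}=\tfrac{1}{\varphi^2}\hat g_{\Sigma_0}$, which is exactly the vanishing of the $\ed t$-component of $\hat g_{\Sigma_0}$; equivalently $\ed t|_{\Sigma_0}\equiv 0$, so $t$ is constant on the connected surface $\Sigma_0$. Thus $\Sigma_0\subset S_\tau$ for some $\tau\in(0,T]$, and comparing dimensions together with the fact that $\Sigma_0$ is a closed (compact, boundaryless) surface embedded in the connected $S_\tau$ forces $\Sigma_0=S_\tau$, proving (ii). Since $\mu$ is by hypothesis a function of $t$ alone, (iii) then follows immediately. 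The step I expect to require the most care is the sharp chain of inequalities above: one must keep track of which equality delivers $\Sigma_0=S_\tau$ (the one from Lemma~\ref{areaCompLemma}) versus which will be needed in the companion items (\ref{levelset_metric})--(\ref{levelset_Z}) of Proposition~\ref{levelsetProp} (the equalities $d\sigma_g=d\sigma_{\hat g}$ and $R_+^\mu=2/\varphi^2$), and to verify that the degree hypothesis genuinely supplies the final inequality $|\deg\Phi|\ge 1$.
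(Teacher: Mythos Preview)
Your proof is correct, but for part (ii) you take a longer route than the paper. The paper's argument for (ii) is entirely local and elementary: from the equality $-\nu(\mu)=|\ed\mu|_g$ that you already extracted, Cauchy--Schwarz forces $\nu$ to be parallel to $\nabla_g\mu$ (where the latter is nonzero), so for every tangent vector $X\in T\Sigma_0$ one has $\ed\mu(X)=g(\nabla_g\mu,X)\propto g(\nu,X)=0$; thus $\mu$ is constant along the connected surface $\Sigma_0$, and since $\mu(t)$ is strictly decreasing this forces $t\equiv\tau$ on $\Sigma_0$. Combined with $\Sigma_0\cong\S^2$ (Lemma~\ref{sphereLemma}), this gives $\Sigma_0=S_\tau$. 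In particular the paper's proof of the lemma needs neither the Gauss--Bonnet chain, nor Lemma~\ref{areaCompLemma}, nor the nonzero-degree hypothesis.

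Your approach instead runs the full chain of inequalities \eqref{Rplus_intEst}--\eqref{GaussBonnetCbd} and reads off $\ed t|_{\Sigma_0}=0$ from saturation of Lemma~\ref{areaCompLemma}. This is valid---pointwise equality in $d\sigma_{\hat g}\ge \varphi^2|\Phi^*d\sigma_{\S^2}|$ does force $\ed t|_{T\Sigma_0}=0$, as you claim---and it has the side benefit of simultaneously delivering the equalities $d\sigma_g=d\sigma_{\hat g}$ and $R_+^\mu=2/\varphi^2$ needed for items (\ref{levelset_metric})--(\ref{levelset_Z}) of Proposition~\ref{levelsetProp}. But as a proof of the lemma alone it is heavier than necessary: you derived $\nu(\mu)=-|\ed\mu|_g$ and then set it aside, when that one equation already suffices.
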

\begin{proof}
	By assumption, \eqref{obviousIneq} must be equalities. 
	In particular, the traceless part of $\II_{\Sigma_0}$ must vanish, and thus $\Sigma_0\subset (\B_T,g)$ is umbilic, justifying (i).
     	Moreover, $-\nu(\mu)= |\ed \mu|_g$, and so $\nu$ must be 
	parallel to $\nabla_g \mu$. Thus,
     for any tangent vector $X\in T\Sigma_0$, we have that $\ed\mu(X) = g(\nabla_g\mu, X)$  
     is proportional to $g(\nu,X) = 0$; this implies that
      $\mu$ is constant along $\Sigma_0$. Combining with the fact that $\Sigma_0\cong \S^2$ (Lemma~\ref{sphereLemma}), we conclude that $\Sigma_0$ is a level set $S_\tau$, justifying (ii), and
      (iii) immediately follows.
\end{proof}

\noindent\emph{Proof of Proposition~\ref{levelsetProp}.}
    The assumption $g\ge \hat g$ implies the relation between area forms on $\Sigma_0$:
    \begin{equation}\label{sigmaVSsigma}
        d\sigma_g \ge d\sigma_{\hat g}.
    \end{equation}    
  We deduce
    \begin{equation}\label{Rplus_intEst}
    \begin{alignedat}{1}
        \int_{\Sigma_0}R_+^\mu d\sigma_g
       &\ge \int_{\Sigma_0} \frac{2}{\varphi^2} d\sigma_{\hat g}
        \ge 2 \int_{\Sigma_0}  |\Phi^*d\sigma_{\S^2}|\\
        &\ge 2\left|\int_{\Sigma_0} \Phi^*d\sigma_{\S^2}\right| 
        =2 k \int_{\S^2}d\sigma_{\S^2} = 8k\pi,
    \end{alignedat}
    \end{equation}
    where $k := |\deg(\Phi) |\ge 1$ by assumption.  
    In \eqref{Rplus_intEst}, the first inequality is due to \eqref{sigmaVSsigma} and 
     Lemma~\ref{RplusLemma}; the second inequality follows from Lemma~\ref{areaCompLemma}; the remaining (in)equalities are obvious.
     
     On combining \eqref{GaussBonnet} with \eqref{Rplus_intEst}, we obtain
     \begin{equation}\label{GaussBonnetCbd}
         8\pi = \int_{\Sigma_0}R_{\Sigma_0}d\sigma_g \ge \int_{\Sigma_0} R_+^\mu d\sigma_g
         \ge 8k\pi, \qquad (k\ge 1).
     \end{equation}
     This enforces the two inequalities in \eqref{GaussBonnetCbd} to become equalities.
     Saturation of the first inequality, which we deduced from \eqref{RminusRplusIntEst}, implies that
     $\lambda_1 = 0$ and that $u$ is a constant; hence, by \eqref{Lfirstev}, $R_{\Sigma_0} = R_+^\mu$;
      then, by \eqref{Ldefn}, $L_{\Sigma_0} = -\Delta_{\Sigma_0}$.  
    With this established, the relation \eqref{JLcompare} would enforce that $J_{\Sigma_0} = L_{\Sigma_0} = -\Delta_{\Sigma_0}$, justifying (\ref{levelset_jacobi}). By Lemma~\ref{levelSetLemma}, (\ref{levelset_ls}) and (\ref{levelset_umb})
    follow.
  
  Next consider saturation of the second inequality in \eqref{GaussBonnetCbd}, or rather \eqref{Rplus_intEst}. Because we have already deduced that $\Sigma_0$ is a $t$-level set, the
  second and third inequalities in \eqref{Rplus_intEst} automatically become equalities. Saturation of the first inequality in \eqref{Rplus_intEst}, on the other hand, has two implications:
  \[
  	d\sigma_g = d\sigma_{\hat g}\quad \mbox{and} \quad
	R_+^\mu = \frac{2}{\varphi(\tau)^2}.
  \]
 The former, along with $g\ge \hat g$, implies that
 \begin{equation}\label{eqIndMet}
 g_{\Sigma_0} = \hat g_{\Sigma_0} = \varphi(\tau)^2g_{\S^2};
 \end{equation}
  the latter, 
 along with the proof of Lemma~\ref{RplusLemma}, implies that $Z_\mu(\tau) = 0$ and 
 $
 	|\ed\mu|_g = |\ed\mu|_{\hat g},
$	
which is just $-\nu(\mu) = |\partial_t\mu|$ (see the proof of Lemma~\ref{levelSetLemma}). 
Hence, $\nu = \partial_t + X$
for some vector field $X$ on $\Sigma_0 = S_\tau$. Note that
\begin{equation}
	1 = |\nu|_g \ge |\nu|_{\hat g} = |\partial_t|_{\hat g} + |X|_{\hat g} = 1+ |X|_{\hat g};
\end{equation}
we have $X = 0$ and $\nu= \partial_t$. Combining this with \eqref{eqIndMet}, we get $g(p) = \hat g(p)$ for all $p\in \Sigma_0$. This justifies (\ref{levelset_metric}), (\ref{levelset_normal}) and (\ref{levelset_Z}), completing the proof.
\qed

\subsection{Foliation, minimality and rigidity}\

The following `foliation' lemma is standard (cf. \cite{Ye91}, \cite{ACG08}, \cite{Nunes13} and \cite{Zhu21}).

\begin{lemma}\label{foliationLemma}
	Suppose that $\Sigma_0\subset (\B_T,g)$ is a $\mu$-hypersurface (with unit normal $\nu$)
	on which the stability operator $J$ (see \eqref{Jdefn}) reduces to $-\Delta_{\Sigma_0}$.	
	Then there exists an interval\,%
	 \footnote{If $0<\tau<T$, $I$ can be taken to be an open interval containing $0$; if $\tau = T$, $I$ is of the form $(a ,0]$; and if $\tau = \delta$, $I$ is of the form $[0, b)$.}
	 $I$ and a map $\phi: \Sigma_0\times I\rightarrow \B_T$ such that
	\begin{enumerate}[\rm (1)]
		\item $\phi$ is a diffeomorphism onto a neighborhood of $\Sigma_0\subset \B_T$;
		\item the family $\Sigma_s = \phi(\Sigma_0, s)$ is a normal variation of $\Sigma_0$ with
			$\partial_s\phi =\nu$ along $\Sigma_0$;  and
	 	\item on each $\Sigma_s$, the difference $H_{\Sigma_{s}} - \mu$ is a constant $k_s$.
	\end{enumerate}
	\end{lemma}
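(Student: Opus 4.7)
The plan is to produce the leaves as normal graphs over $\Sigma_0$ via an implicit function theorem argument in Banach spaces. Specifically, for a function $w$ on $\Sigma_0$ of small norm, let $\Sigma_w$ denote the $g$-normal graph $\{\exp_p^g(w(p)\nu(p)) : p \in \Sigma_0\}$, and let $H_{\Sigma_w}$ and $\mu|_{\Sigma_w}$ be pulled back to $\Sigma_0$ via the graphing map. A leaf $\Sigma_s$ should correspond to a graphing function of the form $u(s) + s$ such that $H_{\Sigma_{u(s)+s}} - \mu|_{\Sigma_{u(s)+s}}$ is a constant on $\Sigma_0$.

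Fix $\alpha\in(0,1)$ and set
\[
    X := \Bigl\{u \in C^{2,\alpha}(\Sigma_0) : \textstyle\int_{\Sigma_0} u\,d\sigma_g = 0\Bigr\}, \qquad
    Y := \Bigl\{v \in C^{0,\alpha}(\Sigma_0) : \textstyle\int_{\Sigma_0} v\,d\sigma_g = 0\Bigr\}.
\]
For $(u,s) \in X \times (-\epsilon,\epsilon)$ with small norm, define
\[
    F(u,s) := \bigl(H_{\Sigma_{u+s}} - \mu|_{\Sigma_{u+s}}\bigr) - \fint_{\Sigma_0}\bigl(H_{\Sigma_{u+s}} - \mu|_{\Sigma_{u+s}}\bigr) d\sigma_g.
\]
Then $F(0,0) = 0$ since $\Sigma_0$ is a $\mu$-hypersurface. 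The standard first-variation formula for mean curvature gives, for a normal variation $w\nu$, the identity $\frac{d}{dt}(H_\Sigma - \mu|_\Sigma)\big|_{t=0} = J_{\Sigma_0}w$ with $J_{\Sigma_0}$ as in \eqref{Jdefn}. By hypothesis $J_{\Sigma_0} = -\Delta_{\Sigma_0}$, so for $w \in X$,
\[
    D_u F(0,0)(w) = -\Delta_{\Sigma_0} w - \fint_{\Sigma_0} (-\Delta_{\Sigma_0} w) \,d\sigma_g = -\Delta_{\Sigma_0}w,
\]
which is an isomorphism $X \to Y$ by standard Schauder theory. Similarly, the variation in $s$ alone corresponds to $w \equiv 1$, so $D_s F(0,0) = J_{\Sigma_0}(1) - \overline{J_{\Sigma_0}(1)} = 0$.

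The implicit function theorem therefore produces a smooth curve $s \mapsto u(s) \in X$ with $u(0) = 0$ and $F(u(s),s) \equiv 0$; differentiating at $s = 0$ and using $D_s F(0,0) = 0$ forces $u'(0) = 0$. Define $\phi(p,s) := \exp_p^g\!\bigl((u(s)(p) + s)\nu(p)\bigr)$, so that $\partial_s\phi|_{s=0} = (u'(0) + 1)\nu = \nu$. Because $\phi$ has a transverse derivative along $\Sigma_0$ and $\Sigma_0$ is compact, the inverse function theorem yields a uniform interval $I$ on which $\phi$ is a diffeomorphism onto a tubular neighborhood of $\Sigma_0$; the hypersurfaces $\Sigma_s := \phi(\Sigma_0 , s)$ then satisfy (1)-(3) by construction, with $k_s = H_{\Sigma_s} - \mu|_{\Sigma_s} \in \R$. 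When $\Sigma_0$ reaches a component of $\partial\B_T$ (the cases $\tau = T$ or $\tau = \delta$ alluded to in the footnote), the normal graphs are only available on one side of $\Sigma_0$; the same IFT argument, now applied on a half-interval with the unchanged linearization $-\Delta_{\Sigma_0}: X \to Y$, produces the foliation on $[0,b)$ or $(a,0]$ accordingly.

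There is no substantial obstacle here beyond bookkeeping: the entire argument is driven by the observation that the assumption $J_{\Sigma_0} = -\Delta_{\Sigma_0}$ eliminates all zeroth-order terms in the linearization, so that the kernel of the Jacobi operator is exactly the constants and the Fredholm quotient on the mean-zero subspace is invertible. The only mild care required is to choose a function space ($C^{2,\alpha}$ graphs) for which the mean curvature operator is smooth and the Laplacian is an isomorphism modulo constants.
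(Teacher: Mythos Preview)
Your argument is correct and is precisely the standard implicit function theorem proof that the paper invokes by citing \cite[Lemma 3.4]{Zhu21}. The one small addendum the paper singles out---and which your construction does not automatically deliver---is that the map $\phi(p,s)=\exp^g_p\bigl((u(s)(p)+s)\nu(p)\bigr)$ has $\partial_s\phi$ normal to $\Sigma_0$ only at $s=0$, whereas the paper reparametrizes the resulting foliation by flowing along a vector field normal to \emph{all} leaves (cf.\ \cite[p.~6]{ACG08}); this is a cosmetic post-processing step that leaves the leaves $\Sigma_s$ and the constants $k_s$ unchanged.
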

\begin{proof} The proof is the same as that of Lemma~3.4 in \cite{Zhu21}, except for 
the extra step: once having obtained a foliation, we re-express it as a normal variation by using a vector field
normal to all its leaves (cf. \cite[p.6, 2nd paragraph]{ACG08}). 
\end{proof}
	
Before proceeding further, let us state a recurring assumption.

\begin{assumption} \label{relativeCCassum}
	Let $g$ be a metric on $\B_T$ satisfying \eqref{capMetricCond}, and let
	$\Omega\subset (\B_T,g)$ be a Caccioppoli set such that $\partial\Omega\setdiff\{\zerob\}$ 
	is smooth and embedded. Define the class $\Ccal_\Omega$ of 
	Caccioppoli sets by
	\begin{equation}\label{varCls}
		\Ccal_{\Omega} :=\left\{\Omega'\subset \B_T \mbox{ Caccioppoli set}:
						\Omega'\Delta \Omega \Subset \B_T\setdiff\{\zerob\}\right\}.
	\end{equation} 
	Suppose that $\Omega$ is a minimizer in the sense that
	 for any $\Omega'\in\Ccal_\Omega$, we have
 	$\Acal^{\hat H}_{\Omega}(\Omega')\ge 0$;
	and assume that there is a connected component 
	$\Sigma_0\subset \partial\Omega$
	that is a stable $\hat H$-hypersurface,%
	\footnote{We allow $\Sigma_0$ to overlap with $\partial\B_T$.}
	disjoint from $\zerob\in \B_T$ and with nonzero-degree
	projection onto $\S^2$. 
	
\end{assumption}

\begin{lemma} {\rm (Cf. \cite[Section~5.7]{Gromov21Four})}\label{localRigLemma}
	If Assumption~\ref{relativeCCassum} holds, then 
	\begin{enumerate}[\rm (1)]  
	\item there exists a constant $\tau\in (0, T]$ such that
	$\Sigma_0 = S_\tau$ with outward normal $\partial_t$; and
	\item there exists an open neighborhood
	$\Ucal$ of $\Sigma_0 = S_\tau$, disjoint from $\partial\Omega\setdiff\Sigma_0$,
		 on which $g = \hat g$.
	\end{enumerate}
\end{lemma}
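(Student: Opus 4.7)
For part (1) I would apply Proposition~\ref{levelsetProp} directly with $\mu=\hat H(t)=2\cot t$, which is smooth and strictly decreasing on $(0,T]$; a routine substitution in \eqref{Zdefn} shows $Z_{\hat H}\equiv 0$. The remaining hypotheses---stability, the nonzero projection degree, the metric bound $g\ge\hat g$, and the scalar curvature bound---are all furnished by Assumption~\ref{relativeCCassum}. Conclusions (\ref{levelset_ls}), (\ref{levelset_jacobi}), (\ref{levelset_metric}) and (\ref{levelset_normal}) of the proposition then yield $\Sigma_0=S_\tau$ for some $\tau\in (0,T]$, with outward $g$-unit normal $\partial_t$ and $g|_{\Sigma_0}=\hat g|_{\Sigma_0}$, which is exactly (1).

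For part (2), the plan is to foliate near $\Sigma_0$ and transport the rigidity to the nearby leaves. Since $J_{\Sigma_0}=-\Delta_{\Sigma_0}$ by Proposition~\ref{levelsetProp}(\ref{levelset_jacobi}), Lemma~\ref{foliationLemma} produces a smooth family $\phi:\Sigma_0\times I\to\B_T$ of hypersurfaces $\Sigma_s=\phi(\Sigma_0,s)$ with $\Sigma_0$ the $s=0$ leaf, $\partial_s\phi=\nu$ along $\Sigma_0$, and constant mean curvature deficit $H_{\Sigma_s}-\hat H|_{\Sigma_s}\equiv k_s$. Denoting by $\Omega_s$ the region obtained by flowing $\Omega$ along the foliation (so $\Omega_s\in\Ccal_\Omega$) and by $u_s>0$ the normal speed, the first variation of the brane action reads
\[
\frac{d}{ds}\Acal^{\hat H}_\Omega(\Omega_s)
 = k_s\int_{\Sigma_s}u_s\,d\sigma_{\Sigma_s},
\]
and minimality forces $\Acal^{\hat H}_\Omega(\Omega_s)\ge 0$ with equality at $s=0$, so $k_0=0$.

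The crux is to upgrade this to $k_s\equiv 0$ on $I$. I would apply the Gauss--Bonnet-based integral inequality from the proof of Proposition~\ref{levelsetProp} fiberwise to each leaf $\Sigma_s$, viewed as a stable $\mu_s$-hypersurface for $\mu_s:=\hat H+k_s$, which inherits the nonzero-degree projection onto $\S^2$ by continuity from $\Sigma_0$; combining the resulting estimate with the higher-order minimality of $\Omega$ along the foliation and the strict monotonicity of $\hat H$, a bootstrap argument will pinch $k_s\equiv 0$ near $0$. Once this is known, reapplying Proposition~\ref{levelsetProp} to each leaf gives $\Sigma_s=S_{\tau_s}$, $g|_{\Sigma_s}=\hat g|_{\Sigma_s}=\sin^2(\tau_s)\,g_{\S^2}$, and $g$-unit normal $\partial_t$ on every leaf.

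Finally, because the leaves are $t$-level sets with $\partial_t$ as $g$-unit normal, on the foliated neighborhood $\Ucal$ we have $g(\partial_t,\partial_t)=1$, $g(\partial_t,X)=0$ for $X\in TS_{\tau_s}$, and $g|_{TS_{\tau_s}}=\sin^2(\tau_s)g_{\S^2}$, so $g=\ed t^2+\sin^2 t\,g_{\S^2}=\hat g$ on $\Ucal$. Shrinking $\Ucal$ if necessary guarantees $\Ucal\cap(\partial\Omega\setdiff\Sigma_0)=\emptyset$, since $\Sigma_0$ is a component of $\partial\Omega\setdiff\{\zerob\}$. The main obstacle in the plan is the squeeze $k_s\equiv 0$: the first variation alone only pins down $k_0=0$, and one must combine minimality at higher order with the fiberwise Gauss--Bonnet estimate---where the sign of $Z_{\mu_s}=3k_s\hat H+\tfrac{3}{2}k_s^2$ and the monotonicity of $\hat H$ enter essentially---to exclude nonzero $k_s$ along the foliation.
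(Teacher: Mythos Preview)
Your argument for part (1) matches the paper's exactly. The difficulty, as you yourself flag, is the step $k_s\equiv 0$ in part (2), and here your plan has a genuine gap: you invoke the Gauss--Bonnet inequality of Proposition~\ref{levelsetProp} on each leaf $\Sigma_s$, ``viewed as a stable $\mu_s$-hypersurface'' for $\mu_s=\hat H+k_s$. But nothing you have established gives stability of $\Sigma_s$ for $s\ne 0$. The foliation lemma only produces leaves with constant mean-curvature deficit $k_s$; stability of $\Sigma_s$ (non-negativity of $J_{\Sigma_s}$) would follow from $\Omega_s$ being an $\Acal^{\hat H}$-minimizer, which is precisely what you are trying to prove. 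Without stability, the chain \eqref{RminusRplusIntEst}--\eqref{GaussBonnetCbd} does not even start, so the fiberwise application of Proposition~\ref{levelsetProp} is unjustified and the bootstrap has nothing to grip.

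The paper closes this gap differently. Rather than forcing stability on $\Sigma_s$, it exploits the information $k_s\ne 0$ as a \emph{barrier}: if $k_s>0$ for some $s>0$, then on the band bounded by $\Sigma_0$ and $\Sigma_s$ one can choose a small perturbation $\tilde\mu(t)=\hat H(t)+\epsilon\sin^{-3}t$ satisfying the barrier condition (Definition~\ref{barrierDefn}), so Lemma~\ref{mubbExReg} produces a genuine $\tilde\mu$-bubble whose boundary has a good component $\tilde\Sigma_0$ that \emph{is} stable by construction. Since $Z_{\tilde\mu}=\tfrac{3}{2}\epsilon^2\sin^{-6}t>0$ everywhere, Proposition~\ref{levelsetProp}(\ref{levelset_Z}) is violated, a contradiction. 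The case $k_s<0$ for some $s<0$ is symmetric. Thus the missing idea is: do not apply Proposition~\ref{levelsetProp} to the foliation leaves themselves, but use the nonzero $k_s$ to manufacture a new, provably stable $\tilde\mu$-hypersurface on which $Z_{\tilde\mu}$ is strictly positive.
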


\begin{proof}
	 Since $\Sigma_0$ is assumed to be a stable, closed 
	 $\hat H$-hypersurface, and
	 since $Z_{\hat H}\equiv 0$ (see \eqref{Zdefn}), 
	 Proposition~\ref{levelsetProp} applies and yields (1).
	  	
	To prove (2), first note that Proposition~\ref{levelsetProp} and Lemma~\ref{foliationLemma}
	together imply that a neighborhood $\Ucal$ of $\Sigma_0$ is foliated by 
	a normal variation $\{\Sigma_s\}$ $(s\in I)$ of $\Sigma_0$; moreover, on each leaf $\Sigma_s$
	the difference $H_{\Sigma_s} -\hat H$ is 
	a constant $k_s$. Since $\zerob\notin\Sigma_0$ and 
	$\dist_g(\Sigma_0, \partial\Omega\setdiff\Sigma_0)>0$, $\Ucal$ can be chosen 
	to be disjoint from both $\partial\Omega\setdiff\Sigma_0$ and $\zerob$.
	
	For $s_1, s_2\in I$ with $s_1<s_2$ define $\Sigma_{[s_1,s_2]}\subset \B_T$ 
	to be the (compact) subset 
	with boundary $\Sigma_{s_1}\cup \Sigma_{s_2}$; then consider $\Omega_s$ defined by
	 \begin{equation}
	 	\Omega_s:=\left\{
		\begin{alignedat}{2}
		& \Omega\cup \Sigma_{[0,s]}&&, \quad\mbox{if } s\ge 0,\\
		 &\Omega\setdiff\Sigma_{[-s,0]} &&,\quad\mbox{if } s<0.
		\end{alignedat}	
		\right.
	\end{equation}
	Clearly, these
	$\Omega_s$ belong to the class $\Ccal_\Omega$.
	Let us denote
	$\Acal^{\hat H}_\Omega(\Omega_s)$ by $\Acal(s)$ for brevity, and write
	$u_s = \<\partial_s\phi,\nu_s\>>0$ where $\nu_s$ is the (suitably oriented) unit normal 
	along $\Sigma_s$. By Lemma~\ref{foliationLemma} and the first variation formula,
	\begin{equation}\label{firstVarOnFoliation}
		\Acal'(s) = \int_{\Sigma_s} k_s u_s.
	\end{equation}
	Since $\Acal(0)$ attains the minimum, it is necessary that 
	\begin{enumerate}[(i)]
		\item either $\Acal(s)\equiv 0$ for all $s\ge 0$, 
					or $\Acal'(s)>0$ (equivalently, $k_s>0$) for some $s>0$; 
		\item either $\Acal(s)\equiv 0$ for all $s\le 0$, 
					or $\Acal'(s)<0$  (equivalently, $k_s<0$) for some $s<0$.
	\end{enumerate}	
	To complete the proof, it suffices to show that $\Acal(s)\equiv 0$ for all $s\in I$. 
	If this does not hold, first suppose that $k_{s}>0$ for some $s>0$. Then on the Riemannian band
	$\Sigma_{[0,s]}$ with $\partial_- = \Sigma_0$ and $\partial_+=\Sigma_s$ define the function 
	\begin{equation}\label{mutilde}
		\tilde\mu(t) = \hat H(t) + \frac{\epsilon}{\sin^3t},
	\end{equation}
	which is smooth and decreasing in $t$. By choosing sufficiently small $\epsilon$, we can arrange that $\tilde\mu>H_{\Sigma_0}$ on $\Sigma_0$ and that 
	$\tilde\mu< H_{\Sigma_s}$ on $\Sigma_s$.
	Thus, by Lemma~\ref{mubbExReg}, there exists a $\tilde\mu$-bubble $\tilde\Omega$ in $\Sigma_{[0,s]}$; in particular, $\tilde\Sigma = \partial\tilde\Omega\setdiff\Sigma_0$ has a component $\tilde\Sigma_0$ whose projection to $\S^2$ has nonzero degree. However, by a direct calculation
	using \eqref{Zdefn}, we get
	\begin{equation}
		Z_{\tilde\mu}(t) = \frac{3\epsilon^2}{2\sin^6 t} > 0,
	\end{equation}
	contradicting Proposition~\ref{levelsetProp}(\ref{levelset_Z}).
	
	The case when $k_s<0$ for some $s<0$ may be similarly and independently 
	ruled out; it suffices to consider
	$\Sigma_{[s,0]}$ with
	$\partial_-=\Sigma_s$ and $\partial_+ = \Sigma_0$ and
	 the following analogue of \eqref{mutilde}: 
	$\tilde\mu(t) = \hat H(t) - \epsilon \sin ^{-3}t$.
	
	Finally, since we have proved that all $\Omega_s$ are $\Acal^{\hat H}$-minimizing
	in the class $\Ccal_{\Omega}$, each $\Sigma_s$ must be a $t$-level set. By 
	Proposition~\ref{levelsetProp}(\ref{levelset_metric}), $g = \hat g$ on $\Ucal$, and this completes the proof.
\end{proof}

\begin{prop}\label{rigidityProp}
	If Assumption~\ref{relativeCCassum} holds, 
	then $g = \hat g$ on $\B_T$.
\end{prop}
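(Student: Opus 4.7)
The plan is a maximal-extension (open-closed) argument. By Lemma~\ref{localRigLemma}, there exist $\tau\in(0,T]$ and an open neighborhood $\Ucal$ of $S_\tau$ with $g=\hat g$ on $\Ucal$. After shrinking, I may arrange $\Ucal\supset\B_{(\tau_1,\tau_2)}$ with $\tau_1<\tau<\tau_2\le T$, and inside this annulus $\Omega\cap\B_{(\tau_1,\tau_2)}=\B_{(\tau_1,\tau)}$ (the $-\partial_t$ side of $S_\tau$, forced by the outward normal). The goal is to propagate this rigidity down to $\zerob$ and, if $\tau<T$, up to $\partial\B_T$.

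Define
\[
	T_- := \inf\bigl\{s\in[0,\tau]\,:\,g=\hat g \text{ on an open nbhd of }\B_{[s,\tau_2]}\bigr\},
\]
so $T_-\le\tau_1<\tau$, and suppose for contradiction that $T_->0$. The iteration of Lemma~\ref{localRigLemma} proceeds as follows: whenever $g=\hat g$ on $\B_{[\sigma,\tau]}$ with $\Omega$ still having the expected level-set structure on this annulus, the modified set $\Omega_\sigma := \Omega\setdiff\B_{(\sigma,\tau)}$ lies in $\Ccal_\Omega$, which agrees with $\Ccal_{\Omega_\sigma}$ by Remark~\ref{braneAdditivity}(1); a direct computation performed with $g=\hat g$ on the annulus gives
\[
	\Acal^{\hat H}_\Omega(\Omega_\sigma)=\Hcal^{2}(S_\sigma)-\Hcal^{2}(S_\tau)+\int_{\B_{(\sigma,\tau)}}\hat H\,d\Hcal^{3}=0,
\]
so $\Omega_\sigma$ is again a minimizer in $\Ccal_\Omega$ with $S_\sigma\subset\partial\Omega_\sigma$ as a good component; Lemma~\ref{localRigLemma} applied to $\Omega_\sigma$ then extends rigidity strictly below $\sigma$. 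Sending $\sigma\searrow T_-$, standard $BV_{\mathrm{loc}}$-compactness and lower semicontinuity of perimeter produce a limit minimizer $\Omega^*=\Omega\setdiff\B_{(T_-,\tau)}$ in $\Ccal_\Omega$ having $S_{T_-}$ as a smooth embedded boundary component; this component is disjoint from $\zerob$, projects to $\S^2$ with degree one, and has $g$-mean curvature $\hat H(T_-)$ thanks to the one-sided coincidence $g=\hat g$ on $\B_{[T_-,\tau_2]}$. Thus Assumption~\ref{relativeCCassum} holds for $(\Omega^*,S_{T_-})$, and Lemma~\ref{localRigLemma} applied once more yields a neighborhood of $S_{T_-}$ on which $g=\hat g$, contradicting the definition of $T_-$. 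Hence $T_-=0$ and $g=\hat g$ on $\B_{(0,\tau_2]}$.

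A symmetric upward extension (needed only when $\tau<T$) gives $g=\hat g$ on $\B_{[\tau_1,T]}$, so in particular $g=\hat g$ on $\B_{(0,T]}$. Continuity of the two smooth metrics at $\zerob$ then upgrades the equality to all of $\B_T$.

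The principal technical point is justifying the iteration: one must verify at every step that $\Omega$ retains the level-set structure on the newly enlarged coincidence region (so that $\Omega_\sigma$ makes sense), that the brane-action cancellation above is exact, and that the limit step really produces a minimizer $\Omega^*$ with $S_{T_-}$ as a good boundary component. All three points hinge on the metric coincidence $g=\hat g$ on the annulus bought by the previous step; once they are in place, the open-closed/GMT bookkeeping is routine.
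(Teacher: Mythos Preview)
Your open--closed strategy is the same as the paper's, but you miss the one step that carries real content: the possibility that $S_{T_-}$ coincides with another component $\Sigma'\subset\partial\Omega\setminus\Sigma_0$. In that scenario your limit set $\Omega^*=\Omega\setminus\B_{(T_-,\tau)}$ need \emph{not} have $S_{T_-}$ as a boundary component at all---the two sheets cancel in the reduced boundary---so your final application of Lemma~\ref{localRigLemma} breaks down. Your closing paragraph flags ``$\Omega$ retains the level-set structure'' as something to verify but then calls the bookkeeping routine; it is precisely here that the argument is not routine.

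The paper handles this directly. It defines the maximal open interval $(t_1,t_2)\ni\tau$ on which \emph{both} $g=\hat g$ and $\B_{(t_1,t_2)}\cap(\partial\Omega\setminus\Sigma_0)=\emptyset$ hold, notes that $\Omega_{t_1}$ is again a minimizer with $\Acal^{\hat H}_\Omega(\Omega_{t_1})=0$, and splits into two cases at $t_1$. If $S_{t_1}$ is disjoint from $\partial\Omega\setminus\Sigma_0$, Lemma~\ref{localRigLemma} applies to $(\Omega_{t_1},S_{t_1})$ and extends the interval, contradicting maximality. If instead $S_{t_1}$ meets some $\Sigma'\subset\partial\Omega\setminus\Sigma_0$, the smoothness and embeddedness of $\partial\Omega_{t_1}\setminus\{\zerob\}$ force $\Sigma'=S_{t_1}$ as a set but with outward normal $-\partial_t$; since $\Sigma'$ is itself a closed stable $\hat H$-hypersurface with nonzero-degree projection onto $\S^2$, Proposition~\ref{levelsetProp}(\ref{levelset_normal}) says its outward normal must be $+\partial_t$---a contradiction. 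This invocation of Proposition~\ref{levelsetProp}(\ref{levelset_normal}) to rule out the collision is the missing idea in your argument.

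A minor point: once you track both conditions (metric coincidence \emph{and} disjointness from $\partial\Omega\setminus\Sigma_0$) in the definition of the maximal interval, the limiting step via $BV_{\mathrm{loc}}$-compactness becomes unnecessary; you can work directly at the endpoint as the paper does.
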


\begin{proof}
	By Lemma~\ref{localRigLemma}, $\Sigma_0 = S_\tau$ for some $\tau\in (0,T]$, and its
	outward normal is $\partial_t$.
	Without loss of generality, we assume $\tau\in (0,T)$. Let 
	$I = (t_1,t_2)$ be the maximum open interval containing $\tau$ 
	such that $\B_{(t_1,t_2)}$ is disjoint from $\partial\Omega\setdiff\Sigma_0$
	and that $g = \hat g$ on  $\B_{(t_1,t_2)}$. For $t\in I$, let 
	$\Omega_t$ denote $\Omega\setdiff \B_{(t,\tau]}$ if $t< \tau$
	and $\Omega\cup \B_{[\tau,t]}$ if $t\ge \tau$. In particular, 
	$\partial\Omega_t = (\partial\Omega\setdiff\Sigma_0)\cup S_t$.

	It suffices to show that $t_1 = 0$ and $t_2 = T$, and we argue by contradiction.
	First suppose that $t_1>0$. 
	Then
	$\Omega_{t_1}$ is in the class $\Ccal_{\Omega}$, and it satisfies 
	$\Acal^{\hat H}_{\Omega}(\Omega_{t_1}) = 0$.
	If $S_{t_1}$ were disjoint from $\partial\Omega\setdiff\Sigma_0$, then, 
	by Lemma~\ref{localRigLemma}, the interval $I$ can be extended further, 
	violating its maximality. On the other hand, if $S_{t_1}$
	were to intersect a connected component 
	$\Sigma'\subset \partial\Omega\setdiff\Sigma_0$, 
	then by smoothness and embeddedness 
	$\partial\Omega_{t_1}\setdiff\{\zerob\}$ (cf. \cite[Theorem 2.2]{ZZ20}), 
	$\Sigma'$ must be equal to $\Sigma_{t_1}$ but 
	with the opposite outward
	normal, violating Proposition~\ref{levelsetProp}(\ref{levelset_normal}).
	Thus, we conclude that $t_1 = 0$.
	The proof of $t_2 = T$ is similar.
\end{proof}

With Proposition~\ref{rigidityProp}, it becomes clear that 
Theorem~\ref{capRigThm} would follow if one can verify
Assumption~\ref{relativeCCassum}.
To illustrate this point, we now discuss a special case
of Theorem~\ref{capRigThm} which admits a more direct proof.
(The general situation is more subtle and will be addressed in the next section.)

\begin{cor}\label{bdryMetricCor}
	Let $(\B_T, \hat g)$ be the 3-dimensional spherical cap of radius $T\in (0,\pi/2]$. Suppose that $g$ is another Riemannian
	metric on $\B_T$ satisfying  
	$g \ge \hat g$ and
	$R_g\ge R_{\hat g}$
	on $\B_T$; in addition, suppose that
	$H_g\ge H_{\hat g} = 2\cot T$
	and
	$g_{\partial\B_T}  = \hat g_{\partial\B_T}$
	on $\partial\B_T$.
	Then $g = \hat g$.
\end{cor}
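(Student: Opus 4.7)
The plan is to verify Assumption~\ref{relativeCCassum} with $\Omega = \B_T$ and $\Sigma_0 = \partial\B_T = S_T$; Proposition~\ref{rigidityProp} will then immediately yield $g = \hat g$ on $\B_T$. The nonzero-degree projection of $S_T$ onto $\S^2$ and its disjointness from $\zerob$ are trivial, so the work reduces to (i) showing $\B_T$ minimizes the modified brane action
\[
	\tilde\Acal_g(\Omega') := \vol_2^g(\partial\Omega') + \int_{\B_T \setdiff \Omega'} \hat H \, d\vol_g,
\]
cf.~\eqref{functionalMod_Intro}, over the class $\Ccal_{\B_T}$, and (ii) verifying that $S_T$ is a stable $\hat H$-hypersurface of $(\B_T, g)$.

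For (i), I will compare $\tilde\Acal_g$ with its reference analogue $\tilde\Acal_{\hat g}$. Since $T \leq \pi/2$ ensures $\hat H = 2\cot t \geq 0$ on $\B_T \setdiff \{\zerob\}$, and since $g \geq \hat g$ gives both $d\vol_g \geq d\vol_{\hat g}$ and $d\sigma_g \geq d\sigma_{\hat g}$ on every hypersurface, one has $\tilde\Acal_g(\Omega') \geq \tilde\Acal_{\hat g}(\Omega')$ for every $\Omega' \in \Ccal_{\B_T}$. In the reference metric, the vector field $X = \partial_t$ satisfies $\div_{\hat g}X = \hat H$ and $|X|_{\hat g} = 1$ on $\B_T\setdiff\{\zerob\}$, so the divergence theorem applied to $X$ on $\B_T \setdiff \Omega'$ yields the calibration inequality $\tilde\Acal_{\hat g}(\Omega') \geq \vol_2^{\hat g}(S_T) = \tilde\Acal_{\hat g}(\B_T)$. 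Crucially, the boundary hypothesis $g_{\partial\B_T} = \hat g_{\partial\B_T}$ forces $\vol_2^g(S_T) = \vol_2^{\hat g}(S_T)$, hence $\tilde\Acal_g(\B_T) = \tilde\Acal_{\hat g}(\B_T)$. Chaining the three (in)equalities places $\B_T$ at the minimum of $\tilde\Acal_g$.

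For (ii), since only inward normal variations preserve $\Ccal_{\B_T}$, the first variation $\tilde\Acal_g'(0) = \int_{S_T}(\hat H - H_g) u \, d\sigma_g$ must be non-negative for every $u \geq 0$; combined with the hypothesis $H_g \geq \hat H$, this forces $H_g = \hat H$, making $S_T$ an $\hat H$-hypersurface. With the first variation now identically zero, the second variation at $u \equiv 1$ gives $\int_{S_T} V \, d\sigma_g \geq 0$, where $V$ is the potential in $J_{S_T} = -\Delta_{S_T} + V$. A pointwise linear-algebra step using $g|_{S_T} = \hat g|_{S_T}$ together with $g \geq \hat g$ shows that at each $p \in S_T$ the difference has the form $g - \hat g = \alpha\, \partial_t^\flat \otimes \partial_t^\flat$ with $\alpha = \alpha(p) \geq 0$; this gives $\nu_g = (1+\alpha)^{-1/2}\partial_t$, and hence $\nu_g(\hat H) \geq \nu_{\hat g}(\hat H)$ since $\hat H'(T) < 0$. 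Combined with $R_g \geq R_{\hat g}$ and the Cauchy--Schwarz bound $|\II_g|^2 \geq H_g^2/2 = |\II_{\hat g}|^2$, this yields $V \leq V_{\hat g} \equiv 0$ pointwise. The two opposite inequalities force $V \equiv 0$ on $S_T$; therefore $J_{S_T} = -\Delta_{S_T} \geq 0$ (stability), and the equality cases imply $\alpha \equiv 0$ (so $g = \hat g$ along $S_T$ as a full tensor), $R_g = R_{\hat g}$, and $\II_g$ umbilic. Assumption~\ref{relativeCCassum} is now verified, and Proposition~\ref{rigidityProp} completes the proof.

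The main obstacle is step (i): since $(\B_T, g)$ is not a Riemannian band, the existence Lemma~\ref{mubbExReg} does not apply, so the initial minimizer must be exhibited by hand. The resolution is the comparison $\tilde\Acal_g \geq \tilde\Acal_{\hat g}$ together with the $\hat g$-calibration by $X = \partial_t$. Two hypotheses are essential here: the restriction $T \leq \pi/2$, which ensures $\hat H \geq 0$ so that the bulk integral respects $d\vol_g \geq d\vol_{\hat g}$; and the boundary metric equality $g_{\partial\B_T} = \hat g_{\partial\B_T}$, which secures equality at the candidate minimizer $\Omega = \B_T$. Without either, the simple comparison collapses---which is precisely why the general Theorem~\ref{capRigThm} requires the more delicate perturbation approach of Section~\ref{Sec_initMB}.
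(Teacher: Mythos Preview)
Your proof is correct and follows essentially the same strategy as the paper: the calibration argument in (i) via $\div_{\hat g}\partial_t = \hat H$, the comparison $\tilde\Acal_g \geq \tilde\Acal_{\hat g}$ using $T\le\pi/2$ and the boundary-metric hypothesis, and the reduction to Proposition~\ref{rigidityProp} are identical to the paper's proof. The only difference is cosmetic: for (ii) the paper simply asserts stability of $S_T$ (implicitly via the standard fact that one-sided minimality plus positivity of the first eigenfunction of $J_{S_T}$ gives $\lambda_1\ge 0$), whereas you give an explicit pointwise computation that $V\equiv 0$---this is correct but redundant, since the conclusions you extract ($g=\hat g$ along $S_T$, umbilicity, $J_{S_T}=-\Delta_{S_T}$) are exactly what Proposition~\ref{levelsetProp} will deliver once Proposition~\ref{rigidityProp} is invoked.
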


	\begin{proof} 
		Take $\mu = \hat H$, which is in $L^1(\B_T)$. 
		Since adding a constant to a functional
		does not affect its variational properties, we may consider, instead of \eqref{braneActDef},
		\begin{equation}
		\Bcal^\mu(\Omega) := \Hcal^{2}(\partial\Omega)  + \int_{\B_{T}\setdiff\Omega} \mu d\Hcal^3,
		\end{equation}
		for all smooth Caccioppoli sets $\Omega\subset \B_T$ with 
		$\Omega\Delta\B_T\Subset \B_T\setdiff\{\zerob\}$, and underlying metrics
		will be specified in subscripts. 
		Since $\hat H = \div (\partial_t)$ on $\B_{T}\setdiff\{\zerob\}$,
		we have
		\begin{equation}\label{BTMinimizing}
			\Bcal^\mu_{\hat g}(\Omega) = \Hcal^2_{\hat g}(\partial\Omega) - 
						\int_{\partial\Omega} \<\partial_t,\hat\nu\>_{\hat g} d\Hcal_{\hat g}^2 + \Hcal^2_{\hat g}(S_T) \ge \Hcal^2_{\hat g}(S_T)
					= \Bcal^\mu_{\hat g}(\B_T),
		\end{equation}
		where the first equality is an application of the divergence formula, and the inequality
		is derived from the relation $\<\partial_t, \hat\nu\>_{\hat g}\le 1$.
		Now, since $g\ge \hat g$ and $\mu\ge 0$ on $\B_T$ 
		$(T\le\pi/2)$, we have
		$\Bcal_g^\mu(\Omega)\ge \Bcal_{\hat g}^\mu(\Omega)$; moreover, by
		$g_{\partial\B_T}  = \hat g_{\partial\B_T}$, we have
		$\Bcal_{\hat g}^\mu(\B_{T})  = \Bcal_{g}^\mu(\B_{T})$. Combining these with 
		\eqref{BTMinimizing} gives
		$
			\Bcal_g^\mu(\Omega)\ge 
			\Bcal_{g}^\mu(\B_{T});
		$
		 and using $H_g\ge 2\cot T = \hat H|_{\partial\B_T}$, we deduce that
		 $H_g = 2\cot T$ and that $S_T$ is a stable $\hat H$-hypersurface.
		 Now it is easy to see that the pair $(\B_T, S_T)$ satisfies 
		 Assumption~\ref{relativeCCassum}. The conclusion then follows from
		Proposition~\ref{rigidityProp}.
	\end{proof}


\section{Existence of an initial minimizer}\label{Sec_initMB}

Throughout this section, let $g$ be a Riemannian metric on $\B_T$ satisfying \eqref{capMetricCond}.
Our goal is to obtain an `initial' minimizer $\Omega$ and a connected 
component $\Sigma_0\subset\partial\Omega$ 
which satisfy Assumption~\ref{relativeCCassum}. 
To achieve this, we consider perturbations $\mu_\epsilon$
of $\hat H  = 2\cot t$ (see \eqref{muepsilon}).
For each $\epsilon$, we find a Riemannian band $M_\epsilon\subset \B_T$
on which $\mu_\epsilon$ 
satisfies the barrier condition; thus, a $\mu_\epsilon$-bubble 
$\Omega_\epsilon$ exists, and
$\partial\Omega_\epsilon\cap\mathring M_\epsilon$ 
has a component $\Sigma^\epsilon_0$ which projects onto $\S^2$ with 
nonzero degree. One may wonder whether this `degree' property is preserved
in the limit as $\epsilon\rightarrow 0$; this led us to
find that each $\Sigma^\epsilon_0$
must be disjoint from a fixed open neighborhood of $\zerob\in \B_T$, provided $\epsilon$ is small (Proposition~\ref{noCrossing}). Then we verify Assumption~\ref{relativeCCassum} by
analyzing the limits of $\Omega_\epsilon$ and $\Sigma^\epsilon_0$ (Proposition~\ref{initProp}).

\subsection{A choice of $\mu_\epsilon$}\

Let $\epsilon>0$ be a small constant, and define
\begin{equation}\label{tcDef}
    t_c := \min\left\{ \frac{\pi}{4}, \frac{T}{2}\right\}.
\end{equation}
Moreover, we shall fix a
function $\beta\in C^\infty((0,T])$ which is strictly decreasing and satisfies
\begin{equation}\label{betaDefn}
	\beta(t) =  \cot t  \mbox{ on } (0, t_c] \quad \mbox{and }\quad 
	\beta(T) = -1;
\end{equation}
such a $\beta$ clearly exists.
Now consider the function defined on $(0,T]$: 
\begin{equation}\label{muepsilon}
    \mu_\epsilon(t) \equiv \hat H(t) + \epsilon\beta(t) = 2\cot t + \epsilon \beta(t).
\end{equation}
Writing $Z^\epsilon$ for $Z_{\mu_\epsilon}$, we have  (cf. \eqref{Zdefn})
\begin{equation}\label{Zepsilon}
	Z^\epsilon(t) = \frac{3}{2}[\epsilon\beta(t)]^2 + 2\epsilon \beta'(t) + 6\epsilon (\cot t)\beta(t),
\end{equation}
and, in particular,
\begin{equation}\label{ZepsilonTC}
    Z^\epsilon(t) = \frac{\epsilon}{2\sin^2t} [(3\epsilon + 12)\cos^2 t - 4]  
    >0
    \quad\mbox{for } t\in (0,t_c].
\end{equation}
Moreover, by \eqref{Zepsilon}, it is clear that there exists a constant $b_0>0$, depending only
on $\beta$, such that
\begin{equation}\label{ZepsilonLB}
	Z^\epsilon(t)\ge -\epsilon\, b_0 \quad \mbox{for } t\in (0,T].
\end{equation}

\subsection{Existence of a $\mu_\epsilon$-bubble}\

Let $S(r,g)$ (resp., $B(r,g)$) denote the geodesic sphere (resp., open geodesic ball) of radius $r$,  taken with respect to the metric $g$ and
centered at $\zerob\in \B_T$. An asymptotic expansion of the mean curvature function (see Lemma 3.4 of \cite{FST09}) gives:  for small $r>0$ and all $q\in S(r,g)$,
\begin{equation}\label{MCasymp1}
	H_{S(r,g)}(q) = \frac{2}{r} + O(r), \quad \hat H(q) = \frac{2}{t(q)} + O(t(q)).
\end{equation}
Since $g\ge \hat g$, we have $r \ge t(q)$; then by \eqref{muepsilon} and \eqref{betaDefn},
as long as $r<t_c$, we have
\begin{equation}\label{MCasymp2}
	\mu_\epsilon(t(q)) = \frac{2+\epsilon}{t(q)} + O(t(q)) \ge \frac{2+\epsilon}{r} + O(t(q)),\quad
	q\in S(r,g).
\end{equation}
It is now clear that 
there exists an $r_\epsilon <\epsilon$ such that $\mu_\epsilon > H_{S(r_\epsilon,g)}$ on 
$S(r_\epsilon,g)$. 
On the other hand, we have $H_g\ge 2\cot T > \mu_\epsilon(T)$ on $S_T$, where the first
inequality is part of \eqref{capMetricCond}, and the second inequality is due to the choice of
$\mu_\epsilon$ and $\beta$. 
Therefore, $\mu_\epsilon$ satisfies the 
barrier condition (see Definition~\ref{barrierDefn}) applied to the Riemannian band
$(M_\epsilon,g)$, where $M_\epsilon = \B_T\setdiff B(r_\epsilon,g)$, with the distinguished boundaries:
$\partial_- = S(r_\epsilon,g)$ and $\partial_+ = S_T$.
The lemma below follows directly from Lemma~\ref{mubbExReg}.

\begin{lemma}\label{MBepsilonExistenceLemma}
In the Riemannian band $(M_\epsilon,g; S(r_\epsilon,g), S_T)$ 
there exists a minimal $\mu_\epsilon$-bubble $\Omega_\epsilon$; moreover,
$\partial\Omega_\epsilon\setdiff S(r_\epsilon,g)$ is disjoint from $S_T$,
and it has
 a connected component $\Sigma^\epsilon_0$ whose projection onto $\S^2$ has nonzero degree.
\end{lemma}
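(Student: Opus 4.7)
The preceding discussion—the asymptotic expansion \eqref{MCasymp2} yielding $\mu_\epsilon > H_{S(r_\epsilon,g)}$ on $S(r_\epsilon,g)$, together with $H_g\ge 2\cot T > \mu_\epsilon(T)$ on $S_T$—verifies the barrier condition \eqref{barrierDef} for $\mu_\epsilon$ on the Riemannian band $(M_\epsilon, g; S(r_\epsilon, g), S_T)$. Fix any reference set in $M_\epsilon$, for instance a thin one-sided collar of $S(r_\epsilon,g)$. Applying Lemma~\ref{mubbExReg}, we obtain a smooth $\mu_\epsilon$-bubble $\Omega_\epsilon$, and by the remark following that lemma, $\Sigma^\epsilon := \partial\Omega_\epsilon\setdiff S(r_\epsilon,g)$ is a smooth closed hypersurface homologous to $S_T$ in $M_\epsilon$.

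Next I would show that $\Sigma^\epsilon$ is disjoint from $S_T$. Suppose for contradiction that $\Sigma^\epsilon$ meets $S_T$ at a point $p$. Since $\Sigma^\epsilon\subset M_\epsilon$ lies on the inner side of $S_T$, the two smooth surfaces are tangent at $p$, with $\Sigma^\epsilon$ locally on the $\Omega_\epsilon$-side. The first variation at the minimizer forces $H_{\Sigma^\epsilon}(p) = \mu_\epsilon(T)$ (with respect to the outward normal of $\Omega_\epsilon$), whereas by \eqref{capMetricCond} and the choice of $\mu_\epsilon$, $H_{S_T}(p) = H_g(p) \ge 2\cot T > \mu_\epsilon(T)$. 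The strong maximum principle applied to the quasilinear mean-curvature equation then yields a contradiction.

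Finally, I would exhibit a component of $\Sigma^\epsilon$ whose projection to $\S^2$ has nonzero degree. Extend $\Phi$ from \eqref{projPhiDef} to the natural projection $\B_T\setdiff\{\zerob\}\cong (0,T]\times\S^2 \to \S^2$, and let $\omega$ denote the standard area form on $\S^2$. Then $\Phi^*\omega$ is closed on $M_\epsilon$; since $\Sigma^\epsilon$ is homologous to $S_T$ in $M_\epsilon$ and $\Phi|_{S_T}$ is a diffeomorphism, Stokes' theorem gives
\[
    \sum_i \int_{\Sigma^\epsilon_i} \Phi^*\omega \;=\; \int_{S_T}\Phi^*\omega \;=\; \pm 4\pi,
\]
where $\{\Sigma^\epsilon_i\}$ are the connected components of $\Sigma^\epsilon$. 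Since each summand equals $\pm 4\pi\deg(\Phi|_{\Sigma^\epsilon_i})$, at least one component $\Sigma^\epsilon_0$ must have nonzero degree, as claimed.

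The only non-routine step is the disjointness from $S_T$, which is the unique place where the strict barrier inequality is genuinely used; the remainder is either a direct citation of Lemma~\ref{mubbExReg} or an elementary Stokes computation.
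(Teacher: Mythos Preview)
Your proposal is correct and follows the same route as the paper, which records the lemma as an immediate consequence of Lemma~\ref{mubbExReg} together with the remark that $\partial\Omega_\epsilon\setdiff\partial_-$ is homologous to $\partial_+$; your Stokes computation is just an explicit unpacking of that homology statement. One minor redundancy: the separate maximum-principle argument for disjointness from $S_T$ is unnecessary, since the conclusion $\Omega_\epsilon\in\Ccal_{\Omega_0}$ of Lemma~\ref{mubbExReg} already forces $\Omega_\epsilon\Delta\Omega_0\Subset\mathring M_\epsilon$, hence $\Omega_\epsilon$ is empty near $S_T$ (this barrier reasoning is absorbed into the proof of Lemma~\ref{mubbExReg} itself).
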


\begin{lemma}\label{MeetingCptSet}
    $\Sigma^\epsilon_0\cap \B_{[t_c,T]}$ is nonempty.
\end{lemma}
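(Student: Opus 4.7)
The plan is to argue by contradiction. Suppose $\Sigma^\epsilon_0 \cap \B_{[t_c,T]}$ were empty. By Lemma~\ref{MBepsilonExistenceLemma}, $\Sigma^\epsilon_0$ is already disjoint from $S(r_\epsilon,g)$ and $S_T$, so this extra assumption would force $\Sigma^\epsilon_0 \subset \B_{(0,t_c)} \setdiff B(r_\epsilon,g)$; in particular, $\Sigma^\epsilon_0 \subset \B_T \setdiff \{\zerob\}$. Being a component of the boundary of the $\mu_\epsilon$-bubble $\Omega_\epsilon$ that lies strictly inside $\mathring M_\epsilon$, $\Sigma^\epsilon_0$ is a closed embedded hypersurface, and by the remark following Definition~\ref{stabilityDefn} it is a stable $\mu_\epsilon$-hypersurface when equipped with the outward unit normal of $\Omega_\epsilon$.

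The next step is to feed $\Sigma^\epsilon_0$ into Proposition~\ref{levelsetProp} with $\mu = \mu_\epsilon$. The function $\mu_\epsilon(t) = 2\cot t + \epsilon\beta(t)$ is smooth and strictly decreasing on $(0,T]$, since both summands are; the metric hypotheses $g \ge \hat g$ and $R_g \ge R_{\hat g}$ are part of \eqref{capMetricCond}; the projection of $\Sigma^\epsilon_0$ onto $\S^2$ has nonzero degree by Lemma~\ref{MBepsilonExistenceLemma}; and the sign condition $Z^\epsilon \ge 0$ on $\Sigma^\epsilon_0$ is supplied by the explicit formula \eqref{ZepsilonTC}, which in fact gives $Z^\epsilon(t) > 0$ on all of $\B_{(0,t_c]}$. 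Conclusion (\ref{levelset_Z}) of the proposition then forces $Z^\epsilon \equiv 0$ on $\Sigma^\epsilon_0$, directly contradicting the strict positivity just noted; therefore the assumption fails and $\Sigma^\epsilon_0 \cap \B_{[t_c,T]}$ must be nonempty.

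No real obstacle is expected, as this lemma is essentially a one-line consequence of Proposition~\ref{levelsetProp} once the hypotheses are checked. The only small bookkeeping points are that $\Sigma^\epsilon_0$ must be confirmed to avoid $\zerob$ (automatic because $\zerob \in B(r_\epsilon,g)$ and $\Sigma^\epsilon_0 \subset \mathring M_\epsilon$) and that the strict monotonicity of $\beta$, which gives the strict monotonicity of $\mu_\epsilon$, is built into \eqref{betaDefn}.
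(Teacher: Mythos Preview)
Your proof is correct and follows exactly the same route as the paper's own proof: assume $\Sigma^\epsilon_0$ lies entirely in $\B_{(0,t_c)}$, invoke \eqref{ZepsilonTC} to get $Z^\epsilon>0$ there, and contradict Proposition~\ref{levelsetProp}(\ref{levelset_Z}). The paper compresses this into a single sentence, while you spell out the hypothesis-checking, but the argument is identical.
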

\begin{proof}
    Otherwise, $Z^\epsilon > 0$ on 
    $\Sigma^\epsilon_0$, which contradicts Proposition~\ref{levelsetProp}(\ref{levelset_Z}).
\end{proof}

\subsection{A `no-crossing' property of $\Sigma^\epsilon_0$}\label{sec_noCross}\

From now on, let $t_* \in (0,t_c)$ be fixed. We will begin by assuming that
$\Sigma^\epsilon_0\cap\B_{t_*}$
were nonempty; consequences of this hypothesis will be developed progressively with three lemmas
(Lemmas~\ref{slopeLemma}, \ref{posilemmaWArea} and \ref{posiLemma}). 
Based on these lemmas, we prove
that $\Sigma^\epsilon_0$ must be disjoint  from $\B_{t_*}$ for small enough $\epsilon$ (Proposition~\ref{noCrossing}) .

In the following, let  $\hat\nu$ denote the outward-pointing
unit normal on $\Sigma^\epsilon_0$ with respect to $\hat g$, and 
let
$\Phi$ denote the projection map from $\Sigma^\epsilon_0$ 
to $\S^2$ (cf. \eqref{projPhiDef}).  

\begin{lemma}\label{slopeLemma}
If  $\Sigma^\epsilon_0\cap\B_{t_*}$ were nonempty, then there would exist a point $q\in \Sigma^\epsilon_0\cap \B_{[t_*, T]}$ such that the angle $\angle_{\hat g}(\hat \nu, \partial_t)  
\in [\alpha, \pi - \alpha]$ at $q$, where
\begin{equation}\label{alpha}
       \alpha = \arctan\left( \frac{t_c - t_*}{2\pi}\right)< \frac{\pi}{4}. 
    \end{equation}
\end{lemma}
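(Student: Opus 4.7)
My plan is to argue by contradiction: suppose that at every $q \in \Sigma^\epsilon_0 \cap \B_{[t_*, T]}$, the angle $\theta(q) := \angle_{\hat g}(\hat\nu(q), \partial_t|_q)$ lies in $[0,\alpha) \cup (\pi-\alpha, \pi]$, equivalently $|\sin\theta(q)| < \sin\alpha$. The key translation, at any smooth point of $\Sigma^\epsilon_0$, is that
\[
\nabla^{\Sigma^\epsilon_0} t = \partial_t - \<\partial_t, \hat\nu\>_{\hat g}\,\hat\nu
\quad\text{and}\quad \bigl|\nabla^{\Sigma^\epsilon_0} t\bigr|_{\hat g} = |\sin\theta|,
\]
so the contradiction hypothesis reads $|\nabla^{\Sigma^\epsilon_0} t|_{\hat g} < \sin\alpha$ throughout $\Sigma^\epsilon_0 \cap \B_{[t_*, T]}$.

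Next, the hypothesis of the lemma and Lemma~\ref{MeetingCptSet} produce points $p_*, p^* \in \Sigma^\epsilon_0$ with $t(p_*) \le t_*$ and $t(p^*) \ge t_c$. Since $\Sigma^\epsilon_0$ is connected, I join $p_*$ to $p^*$ by a continuous path in $\Sigma^\epsilon_0$; applying the intermediate value theorem to $t$ along this path and truncating to a suitable sub-arc, I extract a continuous path $\gamma : [0, L] \to \Sigma^\epsilon_0 \cap \B_{[t_*, t_c]}$ parameterized by $\hat g$-arc length, with $t(\gamma(0)) = t_*$ and $t(\gamma(L)) = t_c$. The fundamental theorem of calculus, combined with the contradiction hypothesis, then yields
\[
t_c - t_* = \int_0^L \frac{d}{ds}(t\circ\gamma)(s)\,ds \le \int_0^L \bigl|\nabla^{\Sigma^\epsilon_0} t\bigr|_{\hat g}(\gamma(s))\,ds < L\sin\alpha,
\]
so $L > (t_c - t_*)/\sin\alpha = \sqrt{(2\pi)^2 + (t_c - t_*)^2} > 2\pi$.

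The hard part will be converting this forced lower bound on $L$ into a genuine contradiction. Decomposing $\gamma'(s) = (dt/ds)\,\partial_t + W(s)$ with $W(s)$ horizontal, the unit-speed relation $1 = (dt/ds)^2 + \sin^2 t \cdot |\Phi_* W|_{g_{\S^2}}^2$ together with $|dt/ds| < \sin\alpha$ forces $|\Phi_* W|_{g_{\S^2}} > \cos\alpha/\sin t \ge \cos\alpha/\sin t_c$; integrating, the projected curve $\Phi\circ\gamma \subset \S^2$ has $g_{\S^2}$-length exceeding $2\pi/\sin t_c \ge 2\pi$. To close the argument I would construct an explicit short reference path in $\Sigma^\epsilon_0$ connecting $S_{t_*}$ to $S_{t_c}$ whose $\hat g$-length is at most $\sqrt{(2\pi)^2 + (t_c - t_*)^2}$, exploiting the nonzero degree of $\Phi$ (so that short curves in $\S^2$ admit lifts to $\Sigma^\epsilon_0$) together with the curvature bounds afforded by Lemma~\ref{sffLemma} and the warped-product geometry of $\B_{[t_*, t_c]}$. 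This existence step is the central technical obstacle; the angle-to-gradient identity, the intermediate-value extraction, and the MVT application are all essentially routine.
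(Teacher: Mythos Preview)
Your setup is sound: the contradiction hypothesis is correctly reformulated as the pointwise gradient bound $|\nabla^{\Sigma^\epsilon_0} t|_{\hat g} < \sin\alpha$ on $\Sigma^\epsilon_0 \cap \B_{[t_*,T]}$, and the consequence that \emph{every} curve in $\Sigma^\epsilon_0 \cap \B_{[t_*,T]}$ joining $S_{t_*}$ to $S_{t_c}$ has $\hat g$-length exceeding $(t_c-t_*)/\sin\alpha = \sqrt{(2\pi)^2 + (t_c-t_*)^2}$ is correct. The gap is exactly where you flag it: you never construct the short reference path, and the tools you propose for doing so are not the right ones. Nonzero degree of $\Phi$ says nothing about the existence of short lifts (a degree-one map can stretch enormously), and the curvature bound of Lemma~\ref{sffLemma} controls $|\II|$, not lengths of curves in $\Sigma^\epsilon_0$; neither produces a path of the required length.

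The paper closes the argument with a topological step you are missing. Under the contradiction hypothesis, $\Phi$ is a local diffeomorphism on $\Sigma^\epsilon_0 \cap \B_{[t_*,T]}$, but that alone does not give unique path-lifting. The paper instead isolates a connected component $\mathcal{E}$ of $\Sigma^\epsilon_0 \cap \B_{(t',t_c]}$ (for a regular value $t'$ just above $t_*$) that meets both $S_{t'}$ and $S_{t_c}$, observes that on $\mathcal{E}$ the angle lies in a \emph{single} one of the two intervals, and then uses an orientation argument to glue $\bar{\mathcal{E}}$ with one of the two complementary regions $U_1\subset S_{t'}$ so that $\Phi|_{\bar{\mathcal{E}}\cup U_1}$ becomes a local homeomorphism onto $\S^2$, hence (by compactness and simple connectivity of $\S^2$) a global homeomorphism. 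Now a shortest $\S^2$-curve from $\Phi(x)$ (with $x\in\mathcal{E}\cap S_{t_c}$) to $\partial\Phi(\mathcal{E})$ has length at most $\pi$ and lifts via the inverse to a curve $\gamma$ in $\bar{\mathcal{E}}$ ending on $S_{t'}$. The estimate $|\gamma'_N|_{\hat g}\le (\tan\alpha)|d\Phi(\gamma')|_{g_{\S^2}}$ then forces $t_c - t' \le \pi\tan\alpha = \tfrac12(t_c - t_*)$, the desired contradiction. This homeomorphism-and-lift construction is the substantive content you would need to supply; it does not follow from degree or curvature considerations.
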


\begin{proof}
We argue by contradiction, so let us assume that
$\angle_{\hat g}(\hat\nu,\partial_t)\in [0,\alpha)\cup(\pi - \alpha,\pi]$ everywhere
on $\Sigma^\epsilon_0\cap \B_{[t_*,T]}$.
Because  $\Sigma^\epsilon_0$ is connected and intersects both $S_{t_*}$ (by assumption)
and $S_{t_c}$ (by Lemma~\ref{MeetingCptSet}), the image of $t|_{\Sigma^\epsilon_0}$
contains the interval $[t_*, t_c]$.

Let $t'\in (t_*,t_c)$ be a regular value of $t|_{\Sigma^\epsilon_0}$
that is sufficiently close to $t_*$. Because $\Sigma^\epsilon_0$ is connected, 
there exists a connected component $\Ecal\subset\Sigma^\epsilon_0\cap \B_{(t', t_c]}$
whose closure $\bar \Ecal$ intersects both $S_{t'}$ and $S_{t_c}$. On $\Ecal$, the angle
$\angle_{\hat g}(\hat \nu,\partial_t)$ can only take value in one of the intervals $[0,\alpha)$
and $(\pi - \alpha, \pi]$, but not both. Without loss of generality, let us assume that 
$\angle_{\hat g}(\hat\nu,\partial_t)\in [0,\alpha)$ on $\Ecal$. 

Since $t'$ is a regular value of $t|_{\Sigma^\epsilon_0}$, $\bar\Ecal$ meets $S_{t'}$ transversely. 
In particular, $\mathscr{C}:=\bar\Ecal\cap S_{t'}$ is a disjoint union of finitely many circles. It is easy to see that $S_{t'}\setdiff \mathscr{C} = U_1\cup U_2$ for some open subsets
$U_i\subset S_{t'}$ with $\partial U_i = \mathscr{C}$ $(i = 1,2)$.

Both $U_i$ and $\Ecal$ are oriented, and the orientations are associated to the respective
normal directions, $\partial_t$ and $\hat \nu$, by the right-hand rule. 
The orientation on $\mathscr{C}$ induced by $\Ecal$ must completely agree with 
that induced by either $U_1$ or $U_2$; otherwise, gluing $\Ecal$ with either $U_1$ or $U_2$
along $\mathscr{C}$ and smoothing would yield a non-orientable closed surface embedded
in $\B_T$, which is impossible.

Thus, we can assume that $U_1$ and $\Ecal$ induce \emph{opposite}  orientations on $\mathscr{C}$.
Since $\angle_{\hat g}(\hat\nu, \partial_t) \in [0,\alpha)$ on $\Ecal$, it is easy to see that 
the restriction of $\Phi$ to $\bar\Ecal\cup U_1$ is a local homeomorphism to $\S^2$. 
Since $\bar\Ecal\cup U_1$ is compact, $\Phi|_{\bar\Ecal\cup U_1}$ is a covering map; this map
must be a homeomorphism, since $\S^2$ is simply connected and $\bar\Ecal\cup U_1$ is connected.

Pick any $x\in \Ecal\cap S_{t_c}$. Choose a shortest (regular) curve $\Gamma: [0,1]\rightarrow\Phi(\bar\Ecal)$ 
connecting $\Gamma(0) = \Phi(x)$ and $\partial(\Phi(\Ecal))$; in particular,
\begin{equation}\label{gammaLength}
	\length_{g_{\S^2}}(\Gamma) \le \pi. 
\end{equation}	

Now let $\gamma = (\Phi|_{\bar\Ecal})^{-1}\circ \Gamma$, and write its tangent vectors $\gamma'$ as the sum of $\gamma'_N$ (parallel to $\partial_t$) and $\gamma'_T$ (tangent to $t$-level sets). By $\hat g\le g_{\S^2}+\ed t^2$ and the hypothesis
$\angle_{\hat g}(\hat\nu,\partial_t)\in [0,\alpha)\cup (\pi - \alpha, \pi]$, 
we obtain the estimate
\begin{equation}\label{gammaNormalEst}
	 |\gamma'_N|_{\hat g} \le (\tan\alpha)|\gamma'_T|_{\hat g}
	 \le (\tan\alpha) |\ed\Phi(\gamma')|_{g_{\S^2}}.
\end{equation}
Hence,
    \begin{equation}\label{contra1}
    	t_c - t'\le 
        \int_{\gamma}
        |\gamma'_N|_{\hat g}
        \le (\tan \alpha)\cdot\length_{g_{\S^2}}(\Phi(\gamma))\le \pi \tan\alpha \le \frac{1}{2}\left(t_c - t_*\right),
    \end{equation}
  where the first inequality holds because $\gamma(0)\in S_{t_c}$ and $\gamma(1)\in S_{t'}$;
  the second and third inequalities are due to \eqref{gammaNormalEst} and \eqref{gammaLength},
  respectively; the last inequality holds by the choice of $\alpha$.
 Since $t'$ is close to $t_*$, \eqref{contra1} is a contradiction.
\end{proof}

\begin{cor}\label{slopeCor}
   In Lemma~\ref{slopeLemma} we can choose $q$ such that: $\angle_{\hat g}(\hat \nu, \partial_t) = \alpha$ or $\pi - \alpha$ at $q$. 
\end{cor}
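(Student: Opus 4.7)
The plan is to argue by contradiction: suppose that $\theta := \angle_{\hat g}(\hat\nu,\partial_t)$ equals neither $\alpha$ nor $\pi-\alpha$ anywhere on $\Sigma^\epsilon_0\cap\B_{[t_*,T]}$, and derive a contradiction to Lemma~\ref{slopeLemma}.

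The first step is to produce a point where $\theta\in\{0,\pi\}$. By Lemma~\ref{MBepsilonExistenceLemma}, $\partial\Omega_\epsilon\setdiff S(r_\epsilon,g)$ is disjoint from $S_T$, so $t|_{\Sigma^\epsilon_0}$ attains its maximum at some interior point $q_{\max}\in\Sigma^\epsilon_0$; Lemma~\ref{MeetingCptSet} moreover forces $t(q_{\max})\ge t_c>t_*$, hence $q_{\max}\in\Sigma^\epsilon_0\cap\B_{[t_*,T]}$. At this critical point of $t|_{\Sigma^\epsilon_0}$, $T_{q_{\max}}\Sigma^\epsilon_0$ is $\hat g$-orthogonal to $\partial_t$, so $\hat\nu=\pm\partial_t$ and $\theta(q_{\max})\in\{0,\pi\}$. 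I will treat the subcase $\theta(q_{\max})=0$; the subcase $\theta(q_{\max})=\pi$ is entirely symmetric, with $\pi-\alpha$ and $\{\theta>\pi-\alpha\}$ replacing $\alpha$ and $\{\theta<\alpha\}$ throughout.

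Next, let $\mathscr{C}^*$ be the connected component of the open set $\{\theta<\alpha\}$ inside $\Sigma^\epsilon_0\cap\B_{[t_*,T]}$ that contains $q_{\max}$. By continuity of $\theta$ together with the maximality of $\mathscr{C}^*$, every topological boundary point of $\mathscr{C}^*$ in $\Sigma^\epsilon_0\cap\B_{[t_*,T]}$ would automatically satisfy $\theta=\alpha$; so the standing contradictory hypothesis forces $\mathscr{C}^*$ to be clopen in $\Sigma^\epsilon_0\cap\B_{[t_*,T]}$. If $\mathscr{C}^*$ were all of $\Sigma^\epsilon_0\cap\B_{[t_*,T]}$, then $\theta<\alpha$ everywhere on that set, contradicting Lemma~\ref{slopeLemma} directly. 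Otherwise $\mathscr{C}^*$ is a proper connected component of $\Sigma^\epsilon_0\cap\B_{[t_*,T]}$; since $\Sigma^\epsilon_0$ is a connected closed surface that is not contained in $\B_{[t_*,T]}$ (by the hypothesis $\Sigma^\epsilon_0\cap\B_{t_*}\ne\emptyset$), no such component can be a closed submanifold of $\Sigma^\epsilon_0$, and hence $\mathscr{C}^*\cap S_{t_*}\ne\emptyset$.

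In the remaining case I would rerun the proof of Lemma~\ref{slopeLemma} with $\mathscr{C}^*$ in place of $\Sigma^\epsilon_0$: $\mathscr{C}^*$ is connected and meets both $S_{t_*}$ and (through $q_{\max}$) $\B_{[t_c,T]}$, so for any regular value $t'\in(t_*,t_c)$ of $t|_{\mathscr{C}^*}$ sufficiently close to $t_*$, the same component-selection step produces a connected $\Ecal\subset\mathscr{C}^*\cap\B_{(t',t_c]}$ whose closure meets both $S_{t'}$ and $S_{t_c}$; on $\Ecal\subset\mathscr{C}^*$ we have $\theta<\alpha$, so estimate \eqref{contra1} delivers $t_c-t'\le\pi\tan\alpha=(t_c-t_*)/2$, incompatible with letting $t'\to t_*^+$. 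The main subtle point I would expect is the dichotomy in the previous paragraph---checking carefully that forbidding $\theta=\alpha$ on the boundary genuinely forces $\mathscr{C}^*$ to be clopen---together with verifying that all the structural hypotheses of Lemma~\ref{slopeLemma} (connectedness, existence of the component $\Ecal$, and the homeomorphism obtained via $\Phi$ after restricting to $\bar\Ecal$) transfer cleanly from $\Sigma^\epsilon_0$ to the smaller set $\mathscr{C}^*$.
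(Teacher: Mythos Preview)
Your argument is substantially more elaborate than the paper's, which consists of three sentences: at the point where $t|_{\Sigma^\epsilon_0}$ attains its maximum one has $\hat\nu=\pm\partial_t$, and then ``by continuity of angle'' the desired $q$ exists. In other words, the paper simply applies the intermediate value theorem on the connected surface $\Sigma^\epsilon_0$ between the maximum point (where $\theta\in\{0,\pi\}$) and the point supplied by Lemma~\ref{slopeLemma} (where $\theta\in[\alpha,\pi-\alpha]$). The paper does not spell out why the intermediate point may be taken inside $\B_{[t_*,T]}$---a connecting path in $\Sigma^\epsilon_0$ could in principle dip into $\mathring\B_{t_*}$---and your longer contradiction argument, built around the component $\mathscr{C}^*$ and a rerun of Lemma~\ref{slopeLemma}, is precisely an attempt to close that gap. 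So the two proofs share the same starting observation but diverge in how much care is taken about the location of $q$.

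There is one small slip in your dichotomy step. You deduce that $\Sigma^\epsilon_0$ is not contained in $\B_{[t_*,T]}$ from the hypothesis $\Sigma^\epsilon_0\cap\B_{t_*}\neq\emptyset$; but $\B_{t_*}$ and $\B_{[t_*,T]}$ overlap along $S_{t_*}$, so this implication fails if $\Sigma^\epsilon_0$ meets $\B_{t_*}$ only on $S_{t_*}$. Fortunately your desired conclusion $\mathscr{C}^*\cap S_{t_*}\neq\emptyset$ survives: if $\mathscr{C}^*$ avoided $S_{t_*}$ then (being already clopen in $\Sigma^\epsilon_0\cap\B_{[t_*,T]}$) it would be clopen in $\Sigma^\epsilon_0$ itself, hence equal to $\Sigma^\epsilon_0$ by connectedness, contradicting $\mathscr{C}^*\subsetneq\Sigma^\epsilon_0\cap\B_{[t_*,T]}$. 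With this fix, the rerun of Lemma~\ref{slopeLemma} on $\mathscr{C}^*$ goes through: $\mathscr{C}^*$ is closed in $\Sigma^\epsilon_0$, so the closure of any component $\Ecal\subset\mathscr{C}^*\cap\B_{(t',t_c]}$ stays inside $\B_{[t',t_c]}$, and the covering/length estimate leading to \eqref{contra1} transfers verbatim.
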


\begin{proof}
    In $\Sigma^\epsilon_0$ there exists a point at which $t$ attains global maximum. At that point $\hat \nu = \pm \partial_t$. Thus, by continuity of angle, there exists a point $q\in \Sigma^\epsilon_0\cap \B_{t_*}$ at which the angle between $\hat \nu$ and $\partial_t$ is equal to either $\alpha$ or $\pi - \alpha$.
\end{proof}

\begin{lemma}\label{posilemmaWArea}
Let $\alpha$ be defined by \eqref{alpha}. If  $\Sigma^\epsilon_0\cap\B_{t_*}$ were nonempty, then there would exist a constant $S = S(g,\hat g, \beta, t_*)>0$, independent of $\epsilon$,
 and an open subset $U_\epsilon\subset \Sigma_0^\epsilon\cap \B_{[t_*/2, T]}$ such that 
    \begin{enumerate}[\rm (1)]
    \item at each point $q\in U_\epsilon$, $\angle_{\hat g}(\hat\nu, \partial_t)\in (\alpha/2, 2\alpha)\cup (\pi - 2\alpha, \pi - \alpha/2)$;
    \item $\displaystyle\int_{U_\epsilon}|\Phi^* d\sigma_{\S^2}|\ge S$.
   \end{enumerate}
\end{lemma}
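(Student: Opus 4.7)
The plan is to center $U_\epsilon$ at the point $q\in \Sigma^\epsilon_0\cap \B_{[t_*,T]}$ produced by Corollary~\ref{slopeCor}, at which $\angle_{\hat g}(\hat\nu,\partial_t)\in\{\alpha,\pi-\alpha\}$. Concretely, $U_\epsilon$ will be the connected component through $q$ of $\Sigma^\epsilon_0\cap B$, where $B$ is a $g$-geodesic ball around $q$ whose radius is chosen once and for all in terms of $g,\hat g,\beta,t_*$.

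The crucial ingredient is a uniform (in $\epsilon$) $L^\infty$-bound on $\II$ along $\Sigma^\epsilon_0\cap \B_{[t_*/3,T]}$, to be extracted from Lemma~\ref{sffLemma}. This requires an $\epsilon$-independent upper bound on $\Hcal^2_g(\Sigma^\epsilon_0)$, which can be obtained by testing the minimality of $\Omega_\epsilon$ against a fixed comparison Caccioppoli set: a direct computation bounds $\Hcal^2_g(\partial\Omega_\epsilon)$ in terms of fixed quantities plus $\|\mu_\epsilon\|_{L^1(M_\epsilon,\,d\vol_g)}$, and this $L^1$ norm remains bounded as $\epsilon\to 0$ because $\mu_\epsilon\sim 2/t$ near $\zerob$ is integrable against $d\vol_g$ (which is comparable to $t^2\,dt\,d\sigma$ in $\hat g$-polar coordinates, $g$ being a smooth metric at $\zerob$). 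Applying Lemma~\ref{sffLemma} to an open set $V\Subset \B_T\setdiff\{\zerob\}$ containing $\B_{[t_*/3,T]}$ then produces a constant $C_0 = C_0(g,\hat g,\beta,t_*)$ with $|\II|_g\le C_0$ on $\Sigma^\epsilon_0\cap \B_{[t_*/3,T]}$ uniformly in $\epsilon$.

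Next, choose a radius $r_0 = r_0(C_0,t_*)>0$ small enough that the $g$-geodesic ball $B$ of radius $r_0$ centered at $q$ lies inside $\B_{[t_*/2,T]}$ and, by the bound $|\II|\le C_0$, the connected component of $\Sigma^\epsilon_0\cap B$ through $q$ is a normal graph over its tangent plane at $q$ whose Gauss map rotates by at most $\alpha/4$; both are standard consequences of the curvature bound. Let $U_\epsilon$ be this component. Since $g$ and $\hat g$ are uniformly equivalent on the compact set $\B_{[t_*/2,T]}$, the angle condition $\angle_{\hat g}(\hat\nu,\partial_t)\in\{\alpha,\pi-\alpha\}$ at $q$ propagates to $\angle_{\hat g}(\hat\nu,\partial_t)\in(\alpha/2,2\alpha)\cup(\pi-2\alpha,\pi-\alpha/2)$ throughout $U_\epsilon$, verifying (1). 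The graphical representation also gives a uniform lower bound $A_0 = A_0(C_0,r_0)>0$ on the intrinsic $g$-area of $U_\epsilon$. Since $\varphi(t) = \sin t$ is bounded above and below on $\B_{[t_*/2,T]}$ and the angle between $\hat\nu$ and $\partial_t$ stays away from $0,\pi$ by at least $\alpha/2$, the projection $\Phi|_{U_\epsilon}$ has Jacobian relative to the induced $g$-area form bounded below by some $c_1 = c_1(t_*,\alpha)>0$; combining this with Lemma~\ref{areaCompLemma} yields $\int_{U_\epsilon}|\Phi^*d\sigma_{\S^2}|\ge c_1 A_0 =: S$, depending only on $g,\hat g,\beta,t_*$.

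The main obstacle is the uniform second fundamental form bound: as $\epsilon\to 0$, both the large values of $\mu_\epsilon$ and the inner boundary $S(r_\epsilon,g)$ of $M_\epsilon$ collapse to $\zerob$, and one must confirm that the constant $C_1$ in Lemma~\ref{sffLemma}, evaluated with $V$ at distance $\gtrsim t_*$ from $\zerob$, remains uniformly bounded in $\epsilon$. Once this hurdle is cleared, the graphical construction of $U_\epsilon$ and the lower bound on its projected area are essentially routine.
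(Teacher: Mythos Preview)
Your proposal is correct and follows essentially the same strategy as the paper: start from the point $q$ of Corollary~\ref{slopeCor}, invoke Lemma~\ref{sffLemma} (together with a uniform area bound on $\partial\Omega_\epsilon$) to control the second fundamental form away from $\zerob$, and take $U_\epsilon$ to be a definite-radius neighborhood of $q$ on which the angle condition persists. The paper uses an intrinsic $\hat g$-geodesic ball on $\Sigma^\epsilon_0$ (controlling the angle via the explicit bound $|X\langle\hat\nu,\partial_t\rangle_{\hat g}|\le |\hat\II|_{\hat g}+|\hat\nabla\partial_t|_{\hat g}$) and concludes (2) by observing that $\Phi(U_\epsilon)$ contains a round ball of radius $\cos(2\alpha)\rho$ in $\S^2$; one small correction to your write-up is that Lemma~\ref{areaCompLemma} gives the \emph{opposite} inequality to what your Jacobian lower bound needs---the relevant identity is $d\sigma_{\hat g}=\varphi^2|\Phi^*d\sigma_{\S^2}|/|\cos\angle_{\hat g}(\hat\nu,\partial_t)|$, as used in the proof of Lemma~\ref{posiLemma}.
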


\begin{proof}
    To begin with, let $q$ be as in Corollary~\ref{slopeCor}. For any unit tangent vector $X$ (with respect to $\hat g$) of $\Sigma^\epsilon_0$, we have
    \begin{equation}\label{angleturningRate}
        |X\<\hat \nu, \partial_t\>_{\hat g}|
        = |\<\hat\nabla_X \hat\nu, \partial_t\>_{\hat g} + \<\hat\nu, \hat\nabla_X \partial_t\>_{\hat g}|
        \le |\hat \II|_{\hat g} + |\hat\nabla\partial_t|_{\hat g}.
    \end{equation}
    where $\hat \nabla$ is the connection of $\hat g$.
    It is clear that there exists a constant $C = C(\hat g, t_*)$ such that
    $|\hat\nabla\partial_t|_{\hat g}\le C$ on $\B_{[t_*/2,T]}$. 
    Moreover, by applying Lemma~\ref{sffLemma} and by comparing 
    between $|\II|_{g}$ and $|\hat \II|_{\hat g}$, it is not difficult to see that 
    there exists a constant $C' = C'(g,\hat g,\beta,  t_*)$ such that 
    $|\hat\II|_{\hat g}\le C'$ on $\Sigma^\epsilon_0\cap\B_{[t_*/2, T]}$
    for all sufficiently small $\epsilon$.
        Thus, there exists a constant $\rho = \rho(g,\hat g,\beta, t_*)>0$ such that on the geodesic ball 
    \[
    U_\epsilon:=\left\{x\in \Sigma^\epsilon_0: \dist_{\hat g_{\Sigma^\epsilon_0}}(x,q)\le\rho\right\}
    \]
    we have \begin{equation}\label{slopeSqueeze}
        \angle_{\hat g}(\hat \nu, \partial_t) \in \left(\frac{\alpha}{2}, 2\alpha\right)
        							\cup \left(\pi -  2\alpha, \pi - \frac{\alpha}{2}\right).
    \end{equation}
     It is easy to see 
     that $\Phi(U_\epsilon)$
     contains a ball $\mathscr{B}$ of radius $\cos(2\alpha)\rho$ in $\S^2$. The proof is 
     complete by taking  $ S:= \area_{g_{\S^2}}(\mathscr{B})$.
\end{proof}

\begin{lemma} \label{posiLemma}
If $\Sigma^\epsilon_0\cap \B_{t_*}$ were nonempty, then we would have
\begin{equation}\label{posiDiff}
    \int_{\Sigma^\epsilon_0} \frac{2}{\varphi^2} d\sigma_{\hat g}
       - 2\int_{\Sigma^\epsilon_0} |\Phi^*d\sigma_{\S^2}| \ge A_0
\end{equation}
for some positive constant 
$A_0$ that is independent of $\epsilon$.
\end{lemma}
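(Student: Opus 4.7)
The plan is to rewrite the integrand of \eqref{posiDiff} as a pointwise nonnegative quantity measuring the deviation of $\hat\nu$ from $\pm\partial_t$, and then to extract a universal positive contribution from the subset $U_\epsilon$ produced by Lemma~\ref{posilemmaWArea}.

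First I would sharpen Lemma~\ref{areaCompLemma} into an identity. Let $\theta:=\angle_{\hat g}(\hat\nu,\partial_t)$ along $\Sigma^\epsilon_0$. Using the warped-product splitting $\hat g = \ed t^2 + \varphi^2 g_{\S^2}$ together with an adapted $\hat g$-orthonormal tangent frame on $\Sigma^\epsilon_0$, a short Jacobian computation for $\Phi$ yields
\[
|\Phi^* d\sigma_{\S^2}| = \frac{|\cos\theta|}{\varphi^2}\, d\sigma_{\hat g}.
\]
Consequently the integrand of the LHS of \eqref{posiDiff} equals the pointwise nonnegative density $\tfrac{2}{\varphi^2}(1-|\cos\theta|)\, d\sigma_{\hat g}$, and the problem reduces to producing an $\epsilon$-independent positive lower bound for $\int_{\Sigma^\epsilon_0}\tfrac{2}{\varphi^2}(1-|\cos\theta|)\, d\sigma_{\hat g}$.

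Next I would localize the estimate to $U_\epsilon$ and invoke the two parts of Lemma~\ref{posilemmaWArea}. By part (1), on $U_\epsilon$ the angle $\theta$ is bounded away from $0$ and $\pi$; in particular $|\cos\theta|\le c_1:=\cos(\alpha/2)<1$, so $1-|\cos\theta|\ge 1-c_1$ uniformly there. Rewriting the identity in the reverse direction and using $|\cos\theta|\le 1$,
\[
\int_{U_\epsilon}\frac{1}{\varphi^2}\,d\sigma_{\hat g}=\int_{U_\epsilon}\frac{1}{|\cos\theta|}\,|\Phi^* d\sigma_{\S^2}|\ge \int_{U_\epsilon}|\Phi^* d\sigma_{\S^2}|\ge S,
\]
where the last inequality is part (2) of the lemma. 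Combining these,
\[
\int_{\Sigma^\epsilon_0}\frac{2}{\varphi^2}(1-|\cos\theta|)\,d\sigma_{\hat g}\;\ge\;2(1-c_1)\int_{U_\epsilon}\frac{1}{\varphi^2}\,d\sigma_{\hat g}\;\ge\;2(1-c_1)S\;=:\;A_0,
\]
and since $\alpha$ and $S$ depend only on $g$, $\hat g$, $\beta$, $t_*$, so does $A_0$.

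The only calculation of substance is the Jacobian identity for $\Phi$, but it is immediate from the warped-product splitting and poses no genuine difficulty. The real work---exhibiting a patch on $\Sigma^\epsilon_0$ where $|\cos\theta|$ is uniformly bounded away from $1$ while $|\Phi^* d\sigma_{\S^2}|$ carries definite mass---has already been carried out in Lemmas~\ref{slopeLemma}--\ref{posilemmaWArea}, so I expect the present step to amount to packaging those outputs via the identity above.
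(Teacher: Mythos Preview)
Your proof is correct and follows essentially the same route as the paper: both arguments rest on the Jacobian identity $|\Phi^* d\sigma_{\S^2}| = \tfrac{|\cos\theta|}{\varphi^2}\,d\sigma_{\hat g}$ (the paper states the equivalent form $d\sigma_{\hat g} = \tfrac{1}{|\cos\theta|}\varphi^2|\Phi^* d\sigma_{\S^2}|$), then localize to $U_\epsilon$ and use the angle bound $|\cos\theta|\le\cos(\alpha/2)$ together with $\int_{U_\epsilon}|\Phi^* d\sigma_{\S^2}|\ge S$ from Lemma~\ref{posilemmaWArea}. Your packaging of the integrand as the nonnegative density $\tfrac{2}{\varphi^2}(1-|\cos\theta|)\,d\sigma_{\hat g}$ is slightly cleaner than the paper's split into $U_\epsilon$ and its complement, and yields $A_0 = 2S(1-\cos(\alpha/2))$ versus the paper's $A_0 = 2S\bigl(\tfrac{1}{\cos(\alpha/2)}-1\bigr)$, but the substance is identical.
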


\begin{proof}
Up to sign, the area form $d\sigma_{\hat g}$ induced by $\hat g$ on each 
tangent space of $\Sigma^\epsilon_0$ is equal to 
\[
	\frac{1}{\cos(\angle_{\hat g}(\hat \nu,\partial_t))} \varphi^2\Phi^*d\sigma_{\S^2}
\]
provided that $\hat\nu$ is not orthogonal to $\partial_t$.
Thus, by Lemma~\ref{posilemmaWArea},  we have
\begin{equation}\label{UepsilonIntEst}
\begin{alignedat}{1}
    \int_{U_\epsilon}\frac{2}{\varphi^2}d\sigma_{\hat g} &\ge
    \int_{U_\epsilon} \frac{2}{\varphi^2} \frac{1}{\cos(\alpha/2)}\varphi^2|\Phi^*d\sigma_{\S^2}|
    \\
    &\ge 2S\left(\frac{1}{\cos(\alpha/2)} - 1\right) + 2\int_{U_\epsilon}|\Phi^*d\sigma_{\S^2}|.
\end{alignedat}    
\end{equation}
On the other hand, by Lemma~\ref{areaCompLemma},
\begin{equation}\label{UepsilonCplmEst}
	\int_{\Sigma^\epsilon_0\setdiff U_\epsilon}\frac{2}{\varphi^2}d\sigma_{\hat g}
	\ge 2\int_{\Sigma^\epsilon_0\setdiff U_\epsilon}|\Phi^*d\sigma_{\S^2}|.
\end{equation}
Adding \eqref{UepsilonIntEst} with \eqref{UepsilonCplmEst} and rearranging terms, 
we get
\begin{equation}\label{finalAug}
     \int_{\Sigma^\epsilon_0} \frac{2}{\varphi^2} d\sigma_{\hat g}
       - 2\int_{\Sigma^\epsilon_0} |\Phi^*d\sigma_{\S^2}|
       \ge 2S\left(\frac{1}{\cos(\alpha/2)} - 1\right).
\end{equation}
The proof is complete by taking $A_0$ to be the RHS of \eqref{finalAug}. 
\end{proof}

\begin{prop}\label{noCrossing}
    For sufficiently small $\epsilon$,  
     $\Sigma^\epsilon_0$ must be disjoint from the set $\B_{t_*}\subset \B_T$.  
\end{prop}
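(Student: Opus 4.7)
My plan is to argue by contradiction. Suppose that along a sequence $\epsilon_k\to 0^+$ one has $\Sigma^{\epsilon_k}_0\cap \B_{t_*}\neq\emptyset$, so that Lemma~\ref{posiLemma} furnishes a constant $A_0>0$ (independent of $k$) for which \eqref{posiDiff} holds on each $\Sigma^{\epsilon_k}_0$. The key idea is to push the Gauss--Bonnet argument of Proposition~\ref{levelsetProp} through the perturbation and show that the $O(\epsilon)$ error produced by $Z^\epsilon\not\equiv 0$ cannot swallow the strictly positive gap $A_0$.

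First I would verify that the topological conclusion of Lemma~\ref{sphereLemma} survives for all small $\epsilon$. The argument in the proof of Lemma~\ref{RplusLemma} in fact yields the pointwise inequality $R_+^{\mu_\epsilon}\ge \frac{2}{\varphi^2}+Z^\epsilon$ on $\B_T$; combining this with \eqref{ZepsilonLB} and $\sin t\le 1$ gives
\[
R_+^{\mu_\epsilon}\ \ge\ \frac{2}{\varphi^2}+Z^\epsilon\ \ge\ 2-\epsilon b_0\ >\ 0 \qquad\text{on } \B_T,
\]
whenever $\epsilon<2/b_0$. By Remark~\ref{ZepsilonRelaxRmk}, $\Sigma^\epsilon_0\cong \S^2$, so Gauss--Bonnet gives $\int_{\Sigma^\epsilon_0}R_{\Sigma^\epsilon_0}\,d\sigma_g=8\pi$; feeding this into the stability inequality \eqref{RminusRplusIntEst} yields
\[
8\pi\ \ge\ \int_{\Sigma^\epsilon_0}R_+^{\mu_\epsilon}\,d\sigma_g.
\]
On the other hand, using $d\sigma_g\ge d\sigma_{\hat g}$, the uniform lower bound $Z^\epsilon\ge -\epsilon b_0$, and Lemma~\ref{posiLemma} (which improves the integral estimate leading to \eqref{GaussBonnetCbd} by the additive constant $A_0$), I would estimate
\[
\int_{\Sigma^\epsilon_0}R_+^{\mu_\epsilon}\,d\sigma_g\ \ge\ \int_{\Sigma^\epsilon_0}\frac{2}{\varphi^2}\,d\sigma_{\hat g}-\epsilon b_0\,\area_g(\Sigma^\epsilon_0)\ \ge\ 8\pi+A_0-\epsilon b_0\,\area_g(\Sigma^\epsilon_0).
\]
Combining the two displays forces $\epsilon b_0\,\area_g(\Sigma^\epsilon_0)\ge A_0$.

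The main technical point is therefore a uniform area bound $\area_g(\Sigma^\epsilon_0)\le C$ for all small $\epsilon$, which will immediately yield the desired contradiction as soon as $\epsilon<A_0/(b_0 C)$. I would derive it from the minimality of $\Omega_\epsilon$ by choosing $M_\epsilon$ itself as the reference set in \eqref{braneActDef}: since $\Acal^{\mu_\epsilon}_{M_\epsilon}(\Omega_\epsilon)\le \Acal^{\mu_\epsilon}_{M_\epsilon}(M_\epsilon)=0$,
\[
\Hcal^2(\partial\Omega_\epsilon)\ \le\ \Hcal^2(\partial M_\epsilon)+\int_{M_\epsilon\setdiff\Omega_\epsilon}(-\mu_\epsilon)\,d\Hcal^3_g.
\]
The boundary contribution is uniformly bounded because $\Hcal^2_g(S(r_\epsilon,g))\to 0$ as $r_\epsilon\to 0$ while $\Hcal^2_g(S_T)$ is fixed. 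For the volume integral, $\mu_\epsilon>0$ on $\B_{(0,t_c]}$, so only the compact region $\B_{[t_c,T]}$ contributes positively to $-\mu_\epsilon$; there $|\mu_\epsilon|$ is uniformly bounded in $\epsilon$ and $\Hcal^3_g(\B_T)<\infty$ since $g$ is a smooth metric on the compact manifold $\B_T$. This gives $\area_g(\Sigma^\epsilon_0)\le \Hcal^2(\partial\Omega_\epsilon)\le C$ uniformly in small $\epsilon$, closing the argument.
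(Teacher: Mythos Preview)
Your argument is correct and follows essentially the same route as the paper's proof: both combine the pointwise estimate $R_+^{\mu_\epsilon}\ge 2/\varphi^2 + Z^\epsilon$, the lower bound \eqref{ZepsilonLB}, Lemma~\ref{posiLemma}, and a uniform area bound on $\Sigma^\epsilon_0$ to force the Gauss--Bonnet inequality \eqref{GaussBonnet} to overshoot $8\pi$. The paper simply asserts the area bound (``since $\Omega_\epsilon$ is a $\mu_\epsilon$-bubble, the area of $\Sigma^\epsilon_0$ has an upper bound $C_0$ depending only on $g$''), whereas you supply an explicit comparison argument; note only that $M_\epsilon$ itself is not literally in the admissible class $\Ccal_{\Omega_0}$ (it is not disjoint from a neighborhood of $\partial_+$), so your inequality $\Acal^{\mu_\epsilon}_{M_\epsilon}(\Omega_\epsilon)\le 0$ should be obtained by comparing with $M_\epsilon$ minus a $\delta$-collar of $S_T$ and letting $\delta\to 0$---a routine fix. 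It is also worth knowing that the paper records (Remark~\ref{Rmk_altApr2Ineq}) an alternative that bypasses the area bound entirely by using $\varphi\le 1$ to write $R_+^{\mu_\epsilon}\ge (1-b_0\epsilon)\cdot 2/\varphi^2$.
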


\begin{proof}
By \eqref{ZepsilonLB} and the proof of Lemma~\ref{RplusLemma},
we obtain
\begin{equation}\label{RpluswithDeduction}
    R_+^{\mu_\epsilon} 
    \ge \frac{2}{\varphi^2} - 2b_0\epsilon   \qquad\mbox{on } \Sigma^\epsilon_0.
\end{equation}
For small $\epsilon$, Remark~\ref{ZepsilonRelaxRmk} and
Lemma~\ref{sphereLemma} imply that $\Sigma^\epsilon_0$ is homeomorphic to $\S^2$.
Moreover, 
since $\Omega_\epsilon$ is a $\mu_\epsilon$-bubble, the area of $\Sigma^\epsilon_0$
with respect to $g$ has an upper bound $C_0>0$, which can be chosen to depend only on the metric $g$ and not on $\epsilon$. 

Now suppose that $\Sigma_0^\epsilon\cap \B_{t_*}\ne \emptyset$. Then from \eqref{RpluswithDeduction}, 
\eqref{sigmaVSsigma} and \eqref{posiDiff}, we obtain:
\begin{equation}\label{augOfInt}
    \int_{\Sigma^\epsilon_0} R_+^{\mu_\epsilon} d\sigma_g
    \ge \int_{\Sigma^\epsilon_0} \frac{2}{\varphi^2}d\sigma_{\hat g} - 2\epsilon b_0C_0
     \ge \left( A_0 - 2\epsilon b_0 C_0\right)	
       	+ 2\int_{\Sigma^\epsilon_0} |\Phi^*d\sigma_{\S^2}|.
\end{equation}
For small enough $\epsilon$, $A_0>2\epsilon b_0 C_0$;
by stability of $\Sigma^\epsilon_0$, the analogue  of \eqref{GaussBonnet}  reads
\begin{equation}\label{noCross_GB}
	4\pi\chi(\S^2) =  \int_{\Sigma_0^\epsilon} R_{\Sigma_0^\epsilon} d\sigma_g 
	\ge \int_{\Sigma_0^\epsilon} R_+^{\mu_\epsilon} d\sigma_g 
	> 2\int_{\Sigma^\epsilon_0} |\Phi^*d\sigma_{\S^2}| \ge 8\pi;
\end{equation}
a contradiction.
\end{proof}

\begin{remark}\label{Rmk_altApr2Ineq}
 There is another way to get \eqref{noCross_GB}, which does not 
 rely on the assumption of an upper bound $C_0$ of $\area_g(\Sigma^\epsilon_0)$ but does
 rely on the fact that $\varphi\le 1$. In fact, \eqref{RpluswithDeduction} implies
 that $R^{\mu_\epsilon}_+ \ge 2\varphi^{-2}(1 - b_0\epsilon)$, and again by \eqref{sigmaVSsigma}, \eqref{posiDiff} and the degree assumption 
 we have
 \[
 	\int_{\Sigma^\epsilon_0} R^{\mu_\epsilon}_+d\sigma_g \ge 
	(1-b_0\epsilon) \left(A_0 + 2\int_{\Sigma^\epsilon_0}|\Phi^*d\sigma_{\S^2}|\right)
	\ge (1-b_0\epsilon) (A_0 + 8\pi) > 8\pi
 \] 
 for small enough $\epsilon$. 
\end{remark}

\subsection{Existence of a minimizer}\label{limitingSec}\

Let $M_\epsilon$, $\Omega_\epsilon$ and $\Sigma^\epsilon_0$ be as in Lemma~\ref{MBepsilonExistenceLemma}. 
We now study how $\Omega_\epsilon$ and $\Sigma^\epsilon_0$ behave
as $\epsilon\rightarrow 0$.

Recall from \eqref{tcDef} the definition of $t_c$, and let
$t_*\in(0,t_c)$ be fixed. By considering small enough $\epsilon$, we can assume
 $\Sigma^\epsilon_0$ to be homeomorphic to $\S^2$ and disjoint from $\B_{t_*}$.
 
For a fixed $\epsilon$, since $\Sigma^\epsilon_0$ is disjoint from $S_T$, the Jordan--Brouwer separation theorem applies. As a result,  
$\B_T\setdiff\Sigma^\epsilon_0$ has exactly two connected components, say  $\Ucal^\epsilon_-$ and $\Ucal^\epsilon_+$.
Without loss of generality, let us assume that 
$\nu$ points away from $\Ucal^\epsilon_-$ along $\Sigma^\epsilon_0$. 
Given any constant $\delta >0$, let us define
\begin{equation}\label{paddingDef}
\begin{alignedat}{1}
	W^\epsilon_{-\delta} &:= 
			\left\{x\in \Ucal^\epsilon_-: \dist_{g}(x,\Sigma^\epsilon_0)\le \delta\right\},
			\\
	W^\epsilon_{+\delta} &:= 
			\left\{x\in \Ucal^\epsilon_+: \dist_{g}(x,\Sigma^\epsilon_0)\le \delta\right\},
\end{alignedat}
\end{equation}
where distance is taken in $(\B_T,g)$.

\begin{lemma}\label{paddingLemma}
	There exists a constant $\delta>0$, independent of $\epsilon$, such that for all small enough $\epsilon$
	we have $W^\epsilon_{-\delta}\subset\mathring\Omega_{\epsilon}$
	and $W^\epsilon_{+\delta}\cap\Omega_{\epsilon} = \emptyset$.
\end{lemma}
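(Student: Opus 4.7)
My strategy is to first establish uniform geometric control on $\Sigma^\epsilon_0$ as $\epsilon\to 0$, and then to rule out any competing component of $\partial\Omega_\epsilon$ coming close to $\Sigma^\epsilon_0$ via a strong-maximum-principle argument inside a uniform tubular neighborhood.

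\emph{Uniform curvature and tubular neighborhood.} By Proposition~\ref{noCrossing}, $\Sigma^\epsilon_0 \subset \B_{[t_*,T]}$ for all small $\epsilon$. Since $\Sigma^\epsilon_0$ is disjoint from $S_T$ (Lemma~\ref{MBepsilonExistenceLemma}) and the barrier inequality $\mu_\epsilon(T)<H_g|_{S_T}$ propagates into a neighborhood of $S_T$ by pushing $S_T$ inward as parallel copies, one further obtains $\dist_g(\Sigma^\epsilon_0,S_T)\ge \rho_0>0$ uniformly in $\epsilon$. Combined with the uniform area bound $\area_g(\Sigma^\epsilon_0)\le C_0$ (coming from $\Acal^{\mu_\epsilon}_{\Omega_0}(\Omega_\epsilon)\le \Acal^{\mu_\epsilon}_{\Omega_0}(\Omega_0)=0$ together with the uniform $L^1$-bound on $\mu_\epsilon$), Lemma~\ref{sffLemma} yields a uniform bound $|\II|_g\le C''$ on $\Sigma^\epsilon_0$. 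Hence there exists $\delta_0>0$, independent of $\epsilon$, such that the normal exponential map $\exp^\perp\colon \Sigma^\epsilon_0\times (-\delta_0,\delta_0)\to \B_T$ is a diffeomorphism onto an open neighborhood $N^\epsilon_{\delta_0}$ of $\Sigma^\epsilon_0$, on which the signed distance $d^\epsilon$ to $\Sigma^\epsilon_0$ (positive on $\Ucal^\epsilon_+$) is smooth.

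\emph{Reduction to a tubular identification.} I claim that for some $\delta \in (0,\delta_0)$ and all small $\epsilon$,
\[
    \partial\Omega_\epsilon \cap \{|d^\epsilon|<\delta\} \,=\, \Sigma^\epsilon_0.
\]
Granted this, the orientation of $\nu$ forces $\Omega_\epsilon \cap \{|d^\epsilon|<\delta\}=\{-\delta<d^\epsilon<0\}$, which (since $\delta<\delta_0$ places $W^\epsilon_{\pm\delta}\subset N^\epsilon_{\delta_0}$) gives $W^\epsilon_{-\delta}\subset \mathring\Omega_\epsilon$ and $W^\epsilon_{+\delta}\cap \Omega_\epsilon=\emptyset$. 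Suppose the claim fails. Then along some subsequence $\epsilon_k\to 0$ there exist $q_k \in (\partial\Omega_{\epsilon_k}\setdiff \Sigma^{\epsilon_k}_0)\cap N^{\epsilon_k}_{\delta_0}$ with $d^{\epsilon_k}(q_k)\to 0$. The component $\Sigma'_k\subset \partial\Omega_{\epsilon_k}$ containing $q_k$ is a stable $\mu_{\epsilon_k}$-hypersurface, and Lemma~\ref{sffLemma} (applied to the portion of $\Sigma'_k$ in $N^{\epsilon_k}_{\delta_0}$) gives a uniform $|\II|_g$-bound there. Hence near $q_k$, $\Sigma'_k$ is a graph $\{d^{\epsilon_k}=u_k\}$ over a fixed-radius disc $D_k\subset \Sigma^{\epsilon_k}_0$, with $u_k\to 0$ uniformly and $\|u_k\|_{C^2}$ bounded.

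\emph{Contradiction via strong maximum principle.} In the tubular chart, the mean curvature of a graph $\{d^{\epsilon_k}=u\}$ (taken with respect to the $+d^{\epsilon_k}$-direction normal) is a quasilinear elliptic expression $F_k[u]$. Both $u\equiv 0$ (giving $\Sigma^{\epsilon_k}_0$) and $u=u_k$ (giving a piece of $\Sigma'_k$) are solutions of $F_k[u]=\sigma_k\mu_{\epsilon_k}$ for suitable signs $\sigma_k\in \{\pm 1\}$ depending on whether the outward $\Omega_{\epsilon_k}$-normals of the two surfaces point in the same or opposite direction. In the same-sign case, subtracting gives a homogeneous linear uniformly elliptic equation $L_k u_k=0$ with uniformly bounded coefficients; since $u_k\to 0$ uniformly with an interior extremum attained in the limit, the strong maximum principle forces $u_k\equiv 0$ on $D_k$, contradicting $\Sigma'_k\ne \Sigma^{\epsilon_k}_0$. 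The opposite-sign (``sandwich'') case is ruled out by a separate Alexandrov-type touching argument: tangential contact of two $\mu_{\epsilon_k}$-hypersurfaces with oppositely oriented normals forces $\mu_{\epsilon_k}(t(p_k))\to 0$, which, via $\mu_{\epsilon_k}$ being strictly monotone in $t$, pins down the touching point to a specific $t$-level and leads to a direct contradiction with the area and stability bounds.

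\emph{Main obstacle.} The delicate step is the orientation bookkeeping in the maximum-principle argument: $\Sigma'_k$ could approach $\Sigma^{\epsilon_k}_0$ from either side with the outward $\Omega_{\epsilon_k}$-normal pointing consistently or oppositely, and the ``sandwich'' configuration requires a separate Alexandrov-type comparison. A secondary technical point is the uniform separation $\dist_g(\Sigma^\epsilon_0,S_T)\ge \rho_0$, which uses parallel-surface barriers near $S_T$ and relies on the strict gap $\mu_\epsilon(T)<H_g|_{S_T}$ for small $\epsilon$; in the borderline case $H_g|_{S_T}=2\cot T$, a more refined barrier must be constructed using the scalar-curvature assumption on the ambient metric.
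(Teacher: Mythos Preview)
Your argument has a genuine gap in the step where you derive a contradiction from two sheets of $\partial\Omega_\epsilon$ coming close. In the ``same-sign'' case, the conclusion ``the strong maximum principle forces $u_k\equiv 0$ on $D_k$'' is not justified: for fixed $k$ the graph $u_k$ is a positive solution of a linear elliptic equation on a \emph{disk}, and there is no reason for it to attain an interior minimum, nor does Harnack turn a small value at one point into the identity $u_k\equiv 0$. More importantly, for two adjacent boundary sheets of a \emph{single} Caccioppoli set the outward normals necessarily point in opposite directions along the normal line (one exits $\Omega_\epsilon$, the next re-enters), so the ``opposite-sign'' case is the one that actually occurs. Your handling of that case is essentially a placeholder: the assertion that near-touching with opposite orientation forces $\mu_{\epsilon_k}(t(p_k))\to 0$ and that this, together with monotonicity and ``area and stability bounds,'' yields a contradiction, is not an argument---in particular, when $T>\pi/2$ there is a genuine level $t$ at which $\mu_\epsilon$ vanishes, and nothing you wrote excludes the sheets from accumulating there.

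The underlying issue is that you are using only the \emph{stability} of the components, whereas what rules out nearby sheets is the \emph{minimizing} property of $\Omega_\epsilon$. The paper exploits this directly: with the uniform curvature bound on $\partial\Omega_\epsilon\cap\B_{[t_*/2,T]}$ in hand, if another component $\Sigma'$ came within distance $\delta\to 0$ of $\Sigma^\epsilon_0$, one performs a neck surgery (connected sum) joining $\Sigma'$ to $\Sigma^\epsilon_0$ inside $M_\epsilon$. This removes two nearly parallel pieces of surface and adds a small neck, so the perimeter drops by a definite amount while the change in $\int\mu_\epsilon$ over the thin slab is $o(1)$; hence the brane action strictly decreases, contradicting minimality of $\Omega_\epsilon$. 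This bypasses all of the orientation bookkeeping and the separate barrier analysis near $S_T$ that your approach requires.
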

\begin{proof}
	Since in $\B_{[t_*/2, T]}$ 
	all derivatives of $\mu_\epsilon$ are uniformly bounded, it follows from Lemma~\ref{sffLemma} 
	that the norm of the second fundamental form of $\partial\Omega_\epsilon\cap \B_{[t_*/2,T]}$
	is also uniformly bounded. If some other component $\Sigma'$ in 
	$\partial\Omega_\epsilon$
	were to get arbitrarily close to 
	$\Sigma^\epsilon_0$, then a suitable surgery (i.e., a connected sum of $\Sigma^\epsilon_0$
	and $\Sigma'$ performed within $M_\epsilon$)
	would yield a Caccioppoli set that has
	has strictly less brane action, contradicting the minimality of $\Omega_\epsilon$.
\end{proof}

Now we fix a sequence $\{\epsilon_i\}\rightarrow 0$ and corresponding
sequences  of $\Omega_{\epsilon_i}$ and $\Sigma^{\epsilon_i}_0$.

\begin{lemma}\label{convCC}
	The sequence $\{\Omega_{\epsilon_i}\}$ subconverges to a Caccioppoli set 
	$\Omega\subset \B_T$ where convergence is interpreted via the characteristic functions
	with respect to the $L^1_{\loc}$-norm. Moreover, 
	\begin{enumerate}[\rm(1)]
	\item $\partial\Omega\setdiff\{\zerob\}$ is smooth and embedded, and
	\item  $\Omega$ is a minimizer in the sense that
		$\Acal^{\hat H}_{\Omega}(\Omega')\ge 0$ for any Caccioppoli set $\Omega'$ 
		with $\Omega'\Delta\Omega \Subset \B_T\setdiff\{\zerob\}$.
	\end{enumerate}	
\end{lemma}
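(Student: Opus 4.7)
The plan is to extract an $L^1_{\loc}$-subsequential limit of $\{\Omega_{\epsilon_i}\}$ via the standard BV compactness theorem for Caccioppoli sets, to transfer the $\Acal^{\mu_{\epsilon_i}}$-minimality of $\Omega_{\epsilon_i}$ to an $\Acal^{\hat H}$-minimality of the limit $\Omega$, and finally to invoke the regularity theory for prescribed mean curvature minimizers to conclude that $\partial\Omega\setdiff\{\zerob\}$ is smooth and embedded.

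First, I would extend each $\Omega_{\epsilon_i}$ to a Caccioppoli set in $\B_T$ by including the ball $B(r_{\epsilon_i},g)$; this is consistent with the fact that the reference set used to produce $\Omega_{\epsilon_i}$ contains a neighborhood of $\partial_-=S(r_{\epsilon_i},g)$. To obtain uniform local perimeter bounds, fix any compact $K\Subset \B_T\setdiff\{\zerob\}$; for sufficiently small $\epsilon_i$, $K\subset M_{\epsilon_i}$. Comparing $\Omega_{\epsilon_i}$ with the admissible competitors $\Omega_{\epsilon_i}\cup K$ and $\Omega_{\epsilon_i}\setdiff K$ and using the uniform $L^\infty(K)$-bound on $\mu_{\epsilon_i}$ (which follows since $\mu_{\epsilon_i}\to\hat H$ smoothly on $K$), the minimality of $\Omega_{\epsilon_i}$ yields a bound of the form
\begin{equation*}
	\Hcal^2(\partial\Omega_{\epsilon_i}\cap K)\le \Hcal^2(\partial K) + C(K)
\end{equation*}
independent of $i$. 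Exhausting $\B_T\setdiff\{\zerob\}$ by compact sets and applying a diagonal argument then produce a subsequence converging in $L^1_{\loc}$ to a Caccioppoli set $\Omega\subset\B_T$.

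For the minimality statement, given $\Omega'$ with $\Omega'\Delta\Omega\Subset \B_T\setdiff\{\zerob\}$, I would choose a compact set $K$ containing $\Omega'\Delta\Omega$ in its interior and disjoint from $\zerob$. For large $i$, define competitors $\Omega'_i\in\Ccal_{\Omega_{\epsilon_i}}$ that agree with $\Omega'$ on $K$ and with $\Omega_{\epsilon_i}$ outside $K$. The $\mu_{\epsilon_i}$-minimality of $\Omega_{\epsilon_i}$ gives $\Acal^{\mu_{\epsilon_i}}_{\Omega_{\epsilon_i}}(\Omega'_i)\ge 0$. By Remark~\ref{braneAdditivity}, the brane action difference $\Acal^{\mu_{\epsilon_i}}_{\Omega_{\epsilon_i}}(\Omega'_i)$ localizes to $K$; the boundary pieces of $\Omega_{\epsilon_i}$ outside $K$ cancel in the symmetric difference. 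Combining $L^1_{\loc}$-convergence, lower semi-continuity of perimeter, and the fact that $\mu_{\epsilon_i}\to \hat H$ uniformly on $K$, passage to the limit yields $\Acal^{\hat H}_{\Omega}(\Omega')\ge 0$. Finally, standard regularity theory for almost-minimizers of prescribed mean curvature functionals (as invoked in the proof of Lemma~\ref{mubbExReg}) gives smoothness and embeddedness of $\partial\Omega\setdiff\{\zerob\}$, using the dimensional condition $n=3\le 7$ to preclude singularities.

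The main obstacle will be the bookkeeping required to justify that the $\mu_{\epsilon_i}$-brane actions of the competitors $\Omega'_i$ converge to the $\hat H$-brane action of $\Omega'$. Since the reference sets $\Omega_{\epsilon_i}$ and $\Omega$ differ, one must localize the variational problem to $K$ (via Remark~\ref{braneAdditivity}) and verify that the perimeter and volume-integral contributions from outside $K$ are unaffected by the perturbation $\Omega_{\epsilon_i}\mapsto \Omega'_i$; this is routine but requires careful tracking of the terms in \eqref{braneActDef}, together with the uniform convergence $\mu_{\epsilon_i}\to\hat H$ on $K$ to handle the integral against $\mu_{\epsilon_i}$.
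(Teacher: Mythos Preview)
Your approach is correct but takes a genuinely different route from the paper's. You follow the standard measure-theoretic path: uniform local perimeter bounds $\Rightarrow$ BV compactness $\Rightarrow$ minimality of the limit via lower semi-continuity of perimeter $\Rightarrow$ regularity from the established minimality. The paper reverses the order. It first invokes the uniform curvature estimates of Lemma~\ref{sffLemma} on $\partial\Omega_{\epsilon_i}\cap K$ to obtain \emph{graph} convergence of the boundaries (a much stronger mode than $L^1_{\loc}$ convergence of characteristic functions), and combines this with Lemma~\ref{paddingLemma} to deduce that $\partial\Omega\setdiff\{\zerob\}$ is smooth and embedded \emph{before} minimality is known. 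For part~(2) the paper then argues by contradiction, constructing explicit glued competitors $\Omega'_{\epsilon_i}$ and an auxiliary $\Omega^*_{\epsilon_i}$, and controls the perimeter error terms via the graph convergence already in hand rather than by lower semi-continuity. Your route is more economical for this lemma in isolation, since it bypasses Lemmas~\ref{sffLemma} and~\ref{paddingLemma} entirely; on the other hand, the paper's graph-convergence framework is reused immediately in Lemma~\ref{convHS}, where one needs the hypersurfaces $\Sigma^{\epsilon_i}_0$ themselves (not just the Caccioppoli sets $\Omega_{\epsilon_i}$) to subconverge smoothly. One point in your sketch that deserves explicit care is the extra $\Hcal^2$-contribution on $\partial K$ created when you splice $\Omega'$ with $\Omega_{\epsilon_i}$ to form $\Omega'_i$: controlling this requires a coarea/slicing argument to choose $K$ so that the trace mismatch on $\partial K$ vanishes in the limit, which you allude to as ``routine'' but do not spell out.
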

\begin{proof}
	The existence of a convergent subsequence and that of $\Omega$ follow from
	standard theory of BV functions (cf. \cite[Theorem 1.20]{Giusti84}), and let us replace
	$\{\Omega_{\epsilon_i}\}$ by that subsequence.
	 
	Now let $K\subset \B_T\setdiff\{\zerob\}$ be any compact domain.
	For sufficiently large $i$, 
	the second fundamental form of $\partial\Omega_{\epsilon_i}\cap K$
	has a uniform upper bound, and thus $\partial\Omega_{\epsilon_i}\cap K$
	subconverges to a smooth hypersurface $\Scal\subset K$ in the graph sense. By using
	Lemma~\ref{paddingLemma}, it is easy to see that $\Scal$ is embedded and  $\Scal = \partial\Omega\cap K$.
	Since $K$ is arbitrary, we conclude (1).
	
	To show that $\Omega$ is a minimizer,  
	we argue by contradiction. 
	Suppose that there exists a Caccioppoli set $\Omega'$
	and a constant $c>0$ such that $\Omega'\Delta\Omega\Subset\B_T\setdiff\{\zerob\}$ and 
	$\Acal^{\hat H}_\Omega(\Omega')\le -c<0$. 
	Let us choose a compact domain  $K\subset\B_T\setdiff\{\zerob\}$ with smooth boundary
	 such that $\Omega'\Delta\Omega\Subset \mathring K$.
	Consider a thin tubular neighborhood
	$\Tcal$ of $\partial\Omega\cap K$ that is generated by the unit normal field along 
	$\partial\Omega\cap K$; as $\Tcal$ is diffeomorphic to 
	$(\partial\Omega\cap K)\times I$ for some interval $I$, 
	we may modify $K$ such that the image of
	$(\partial\Omega\cap \partial K)\times I$ is equal to
	$\partial\Tcal\cap \partial K$ (in particular, $\partial\Omega$ is transversal
	to $\partial K$). 
	Note that for large $i$, $S(r_{\epsilon_i}, g)$ would be disjoint from $K$, and
	$\partial\Omega_{\epsilon_i}\cap K$ would be completely contained in $\Tcal$.
	\begin{figure}[h!]
		\begin{subfigure}{0.4\textwidth}
		\centering
		\includegraphics[scale = 0.32]{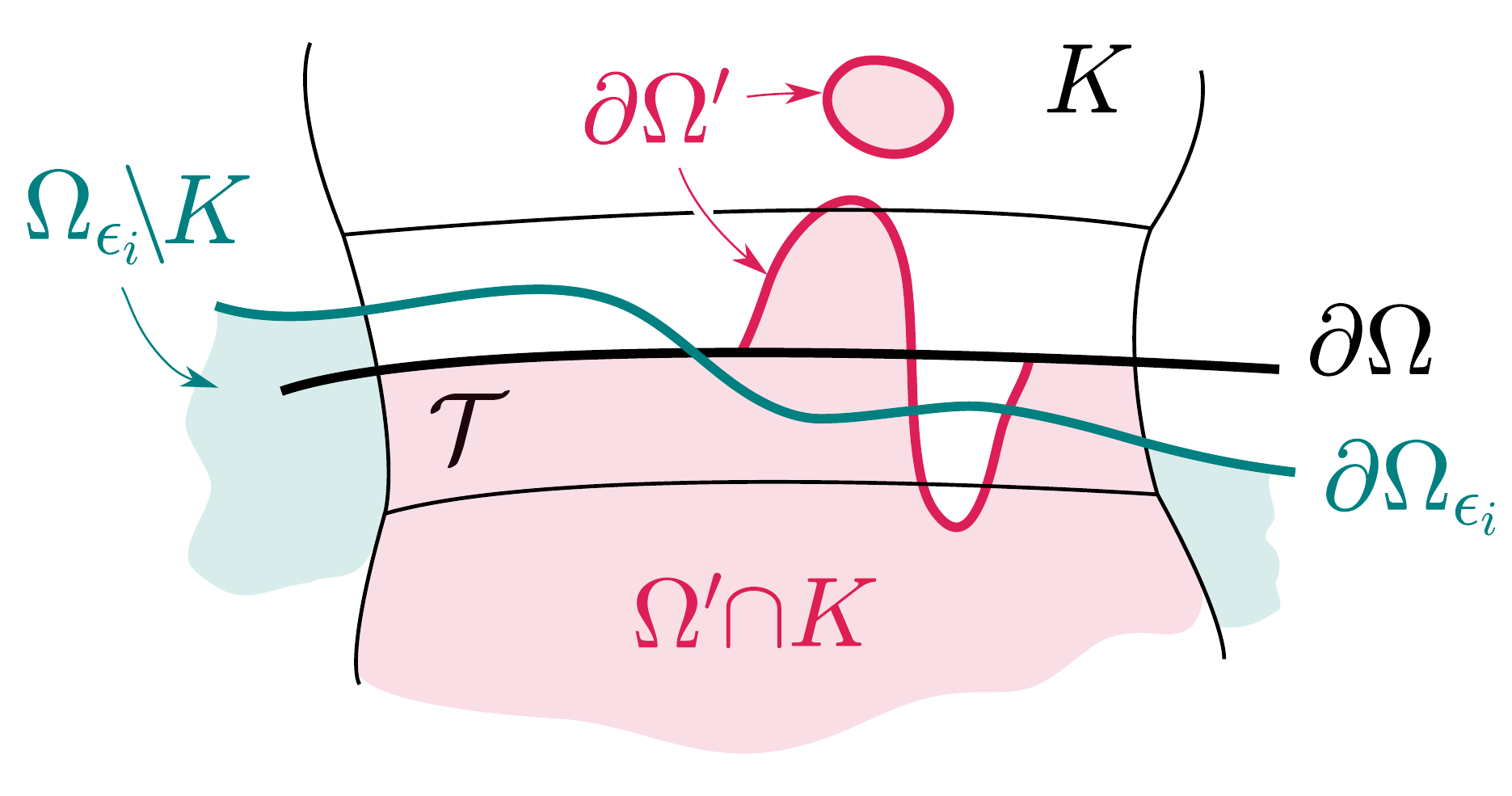}
		\end{subfigure}
		\qquad
		\begin{subfigure}{0.4\textwidth}
		\centering
		\includegraphics[scale = 0.32]{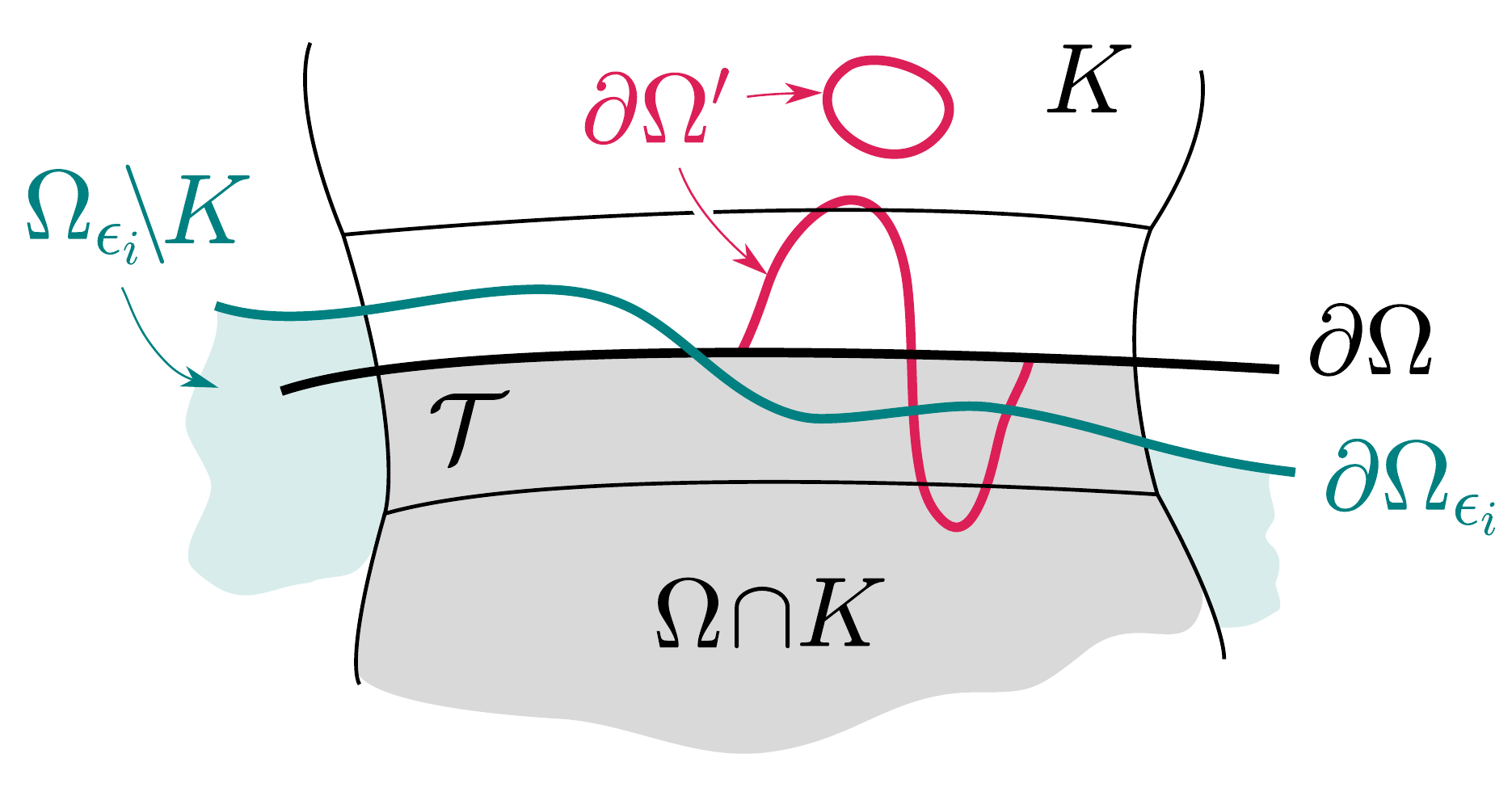}
		\end{subfigure}
		\caption{The shaded regions represent $\Omega_{\epsilon_i}'$ (left figure) and
					 $\Omega^*_{\epsilon_i}$ (right figure).}
		\label{Fig_Omega}
	\end{figure}
	
	Now consider the following Caccioppoli sets (see Figure~\ref{Fig_Omega})
	\begin{equation}
		\Omega_{\epsilon_i}' :=(\Omega_{\epsilon_i}\setdiff K)\cup (\Omega' \cap K),
		\qquad
		\Omega_{\epsilon_i}^* := (\Omega_{\epsilon_i}\setdiff K) \cup (\Omega\cap K).
	\end{equation}
	We claim that, for sufficiently large $i$,
	\begin{equation}\label{OeOestar}
		\Acal^{\mu_{\epsilon_i}}_{\Omega_{\epsilon_i}}(\Omega_{\epsilon_i}^*)\le \frac{c}{4}.
	\end{equation}
	To see this, note that
	$\chi_{\Omega_{\epsilon_i}^*} - \chi_{\Omega_{\epsilon_i}}$ is just
	$\chi_{\Omega_{\epsilon_i}\cap K} - \chi_{\Omega\cap K}$; since $\mu_{\epsilon_i}|_{K}$
	is uniformly bounded and $\chi_{\Omega_{\epsilon_i}}\rightarrow \chi_{\Omega}$ in 
	$L^1$, we have
	\begin{equation}\label{OeOestar_muest}
		\int_{\B_T} (\chi_{\Omega_{\epsilon_i}^*} - \chi_{\Omega_{\epsilon_i}})\mu_{\epsilon_i}
		\rightarrow  0 \qquad (i\rightarrow\infty).
	\end{equation}
	Moreover, it is easy to see that
	\begin{equation}
		\Hcal^2(\partial\Omega_{\epsilon_i}^*) - \Hcal^2(\partial\Omega_{\epsilon_i})
		\le \left[\Hcal^2(\partial\Omega\cap K) - \Hcal^2(\partial\Omega_{\epsilon_i}\cap K)\right]
			+ \Hcal^2(\partial\Tcal \cap \partial K).
	\end{equation}
	Thus, by graph convergence of $\partial\Omega_{\epsilon_i}\cap K$, we can choose 
	$\Tcal$ and $i$ such that 
	\begin{equation}\label{OeOestar_areaEst}
		\Hcal^2(\partial\Omega_{\epsilon_i}^*) - \Hcal^2(\partial\Omega_{\epsilon_i})
		\le \frac{c}{8}.
	\end{equation}
	On combining \eqref{OeOestar_muest} and \eqref{OeOestar_areaEst}, we obtain 
	\eqref{OeOestar} for large $i$.
	
	Now, since $\mu_{\epsilon_i}\rightarrow \mu$ in $L^1(K)$ and 
	$\Omega_{\epsilon_i}'\Delta \Omega_{\epsilon_i}^* = \Omega\Delta \Omega'\Subset\mathring K$, we have, for sufficiently large $i$,
	\begin{equation}\label{OePrimeOestar}
		\Acal^{\mu_{\epsilon_i}}_{\Omega_{\epsilon_i}^*}(\Omega_{\epsilon_i}') \le -\frac{c}{2}.
	\end{equation}
	On comparing \eqref{OeOestar} and \eqref{OePrimeOestar}, 
	we get 
	$\Acal^{\mu_{\epsilon_i}}_{\Omega_{\epsilon_i}}(\Omega_{\epsilon_i}') \le -c/4<0$,
	contradicting the minimality of $\Omega_{\epsilon_i}$. This proves (2).
\end{proof}

\begin{lemma}\label{convHS}
	Let $\Omega$ be as in Lemma~\ref{convCC}.
	The sequence $\{\Sigma^{\epsilon_i}_0\}$ subconverges to a smooth,
	closed stable $\hat H$-hypersurface $\Sigma_0\subset \B_{[t_*, T]}$, which is a $t$-level set in $\B_T$;
	moreover, $\Sigma_0\subset\partial\Omega$ and 
	$\partial\Omega\setdiff\Sigma_0 \Subset \B_T\setdiff\Sigma_0$.
\end{lemma}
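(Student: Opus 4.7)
The plan is to combine uniform curvature and area bounds with Proposition~\ref{noCrossing} to extract a smooth limit of $\Sigma^{\epsilon_i}_0$, and then to identify that limit as a full connected component of $\partial\Omega$ which satisfies the hypotheses of Proposition~\ref{levelsetProp}.

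First I would exploit Proposition~\ref{noCrossing}: for all small $\epsilon_i$, the surfaces $\Sigma^{\epsilon_i}_0$ lie in the compact region $\B_{[t_*,T]}$, which is compactly contained in $\B_{[t_*/2,T]}\setdiff\{\zerob\}$. On the latter region, $\mu_{\epsilon_i}$ has $C^3$-norm uniformly bounded in $\epsilon_i$. Since $\Omega_{\epsilon_i}$ is brane-action minimizing, comparison with a fixed reference Caccioppoli set produces a uniform upper bound $C_0$ for $\area_g(\Sigma^{\epsilon_i}_0)$. Hence Lemma~\ref{sffLemma}, applied with $V = \B_{[t_*/2,T]}\setdiff\{\zerob\}$, yields a uniform bound on $|\II|$ along $\Sigma^{\epsilon_i}_0$. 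Standard Arzel\`a--Ascoli-type compactness for hypersurfaces with bounded second fundamental form and bounded area then provides a subsequence (still denoted $\epsilon_i$) for which $\Sigma^{\epsilon_i}_0$ converges smoothly, in the sense of local normal graphs, to a smooth closed embedded hypersurface $\Sigma_0 \subset \B_{[t_*,T]}$.

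Next, the mean-curvature identity $H_{\Sigma^{\epsilon_i}_0} = \mu_{\epsilon_i}|_{\Sigma^{\epsilon_i}_0}$, together with $\mu_{\epsilon_i}\to \hat H$ in $C^\infty_{\loc}(\B_T\setdiff\{\zerob\})$, passes to the limit to give $H_{\Sigma_0} = \hat H|_{\Sigma_0}$, so $\Sigma_0$ is a smooth closed $\hat H$-hypersurface. Stability passes to the limit by writing the Jacobi quadratic form of each $\Sigma^{\epsilon_i}_0$ as an integral whose coefficients converge smoothly; any would-be negative test function on $\Sigma_0$ could be pulled back via the graph convergence to $\Sigma^{\epsilon_i}_0$ for large $i$, contradicting stability there. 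The nonzero-degree property of $\Phi|_{\Sigma^{\epsilon_i}_0}\to\S^2$ is also preserved: each $\Sigma^{\epsilon_i}_0$ is homeomorphic to $\S^2$ (Lemma~\ref{sphereLemma} together with Remark~\ref{ZepsilonRelaxRmk} applied using \eqref{RpluswithDeduction}), and the graph convergence induces $C^1$-convergence of $\Phi|_{\Sigma^{\epsilon_i}_0}$, so the mapping degree is unchanged in the limit. Thus $\Sigma_0$ verifies the hypotheses of Proposition~\ref{levelsetProp}, which immediately identifies $\Sigma_0$ with a $t$-level set $S_\tau$ for some $\tau\in [t_*,T]$.

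Finally, to place $\Sigma_0$ inside $\partial\Omega$, I would invoke the $L^1_{\loc}$-convergence $\chi_{\Omega_{\epsilon_i}}\to\chi_\Omega$ from Lemma~\ref{convCC}: on a compact tubular neighborhood $K$ of $\Sigma_0$ disjoint from $\zerob$, the hypersurface $\partial\Omega_{\epsilon_i}\cap K$ contains $\Sigma^{\epsilon_i}_0$, and the smooth graph convergence of these pieces to $\Sigma_0$, combined with smoothness and embeddedness of $\partial\Omega\setdiff\{\zerob\}$ from Lemma~\ref{convCC}(1), forces $\Sigma_0$ to be exactly one connected component of $\partial\Omega\cap K$; since $\Sigma_0$ is closed, it is a whole component of $\partial\Omega$. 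For the last claim, Lemma~\ref{paddingLemma} furnishes a uniform collar of width $\delta>0$ around each $\Sigma^{\epsilon_i}_0$ that is disjoint from the other components of $\partial\Omega_{\epsilon_i}$; passing this collar to the limit gives an open neighborhood of $\Sigma_0$ disjoint from $\partial\Omega\setdiff\Sigma_0$, whence $\partial\Omega\setdiff\Sigma_0\Subset \B_T\setdiff\Sigma_0$. The main obstacle throughout is bookkeeping to rule out degeneration of $\Sigma^{\epsilon_i}_0$ (collapse to a point, loss of a sheet, or multiple-cover in the limit); the uniform $|\II|$-bound from Lemma~\ref{sffLemma}, the no-crossing Proposition~\ref{noCrossing}, and the padding Lemma~\ref{paddingLemma} are precisely what prevent each of these scenarios.
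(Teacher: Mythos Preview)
Your proposal is correct and follows essentially the same approach as the paper: uniform containment in $\B_{[t_*,T]}$ via Proposition~\ref{noCrossing}, uniform curvature bounds (the paper invokes this from the discussion preceding Lemma~\ref{paddingLemma} rather than restating Lemma~\ref{sffLemma}), smooth subsequential convergence, passage of stability and degree to the limit, and identification as a $t$-level set via Proposition~\ref{levelsetProp}. The only minor variation is that for $\Sigma_0\subset\partial\Omega$ the paper argues directly from Lemma~\ref{paddingLemma} that every neighborhood of a point of $\Sigma_0$ meets both $\mathring\Omega$ and its complement, whereas you route through Lemma~\ref{convCC}(1) and graph convergence; both arguments are sound and the paper likewise uses Lemma~\ref{paddingLemma} for the final separation claim.
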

\begin{proof}
	By our choice of $\{\epsilon_i\}$, 
	 all $\Sigma^{\epsilon_i}_0$ are contained in the compact set $\B_{[t_*, T]}$ and have 
	 a uniform upper
	 bound on their second fundamental form.
	Thus, by standard minimal surface theory (cf. \cite[Proposition 7.14]{CM11}), $\{\Sigma^{\epsilon_i}_0\}$ 
	subconverges to a smooth closed hypersurface $\Sigma_0$ 
	whose projection onto $\S^2$ has 
	nonzero degree. 
	Now recall that each $\Sigma^{\epsilon_i}_0$ is a stable $\mu_{\epsilon_i}$-hypersurface.
	Since all derivatives of $\mu_{\epsilon_i}$ respectively and uniformly 
	converge to those $\hat H$,
	$\Sigma_0$ is a stable $\hat H$-hypersurface;
	hence, $\Sigma_0$ is a $t$-level set, by Proposition~\ref{levelsetProp}. 
	
	To see that $\Sigma_0\subset\partial\Omega$, first suppose that $\Sigma_0\ne S_T$;
	in this case, it suffices to show that each open neighborhood of any $x\in \Sigma_0$
	must intersect both $\mathring\Omega$ and $\B_T\setdiff\Omega$, and this can be
	easily deduced from Lemma~\ref{paddingLemma}. 
	The case of $\Sigma_0 = S_T$ is similar. Also by Lemma~\ref{paddingLemma},
	$\Sigma_0$ has a tubular neighborhood that is disjoint from all other components of 
	$\partial\Omega$, hence 	$\partial\Omega\setdiff\Sigma_0 \Subset \B_T\setdiff\Sigma_0$.

\end{proof}

On combining Lemmas~\ref{convCC} and \ref{convHS}, we immediately get the following.

\begin{prop}\label{initProp}
	Let $g$ be a Riemannian metric on $\B_T$ satisfying \eqref{capMetricCond}. Then 
	there exists a Caccioppoli set $\Omega\subset \B_T$ and  a connected
	component $\Sigma_0\subset \partial\Omega$ that satisfy 
	Assumption~\ref{relativeCCassum}.
\end{prop}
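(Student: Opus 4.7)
The plan is to assemble the technical ingredients developed throughout Section~\ref{Sec_initMB} in a straightforward combination argument. Given a metric $g$ on $\B_T$ satisfying \eqref{capMetricCond}, I would fix $t_* \in (0, t_c)$ and choose any sequence $\{\epsilon_i\} \to 0$ of positive numbers. For each $\epsilon_i$, Lemma~\ref{MBepsilonExistenceLemma} produces a $\mu_{\epsilon_i}$-bubble $\Omega_{\epsilon_i}$ in the Riemannian band $(M_{\epsilon_i}, g)$, together with a distinguished connected component $\Sigma^{\epsilon_i}_0 \subset \partial \Omega_{\epsilon_i} \setdiff S(r_{\epsilon_i}, g)$ whose projection to $\S^2$ has nonzero degree.

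The key preliminary step is to invoke Proposition~\ref{noCrossing}: for $i$ sufficiently large, $\Sigma^{\epsilon_i}_0$ is disjoint from $\B_{t_*}$ and therefore contained in the compact region $\B_{[t_*, T]}$. This confinement (combined with the uniform control on $\mu_{\epsilon_i}$ and its derivatives away from the origin) is what prevents the good components from degenerating into the tip of the cap and enables a nontrivial limit.

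I would then apply Lemma~\ref{convCC} to pass to a subsequence along which $\chi_{\Omega_{\epsilon_i}} \to \chi_\Omega$ in $L^1_{\loc}$ for some Caccioppoli set $\Omega \subset \B_T$; by that lemma, $\partial\Omega \setdiff \{\zerob\}$ is smooth and embedded, and $\Omega$ is an $\Acal^{\hat H}$-minimizer in the class $\Ccal_\Omega$ of \eqref{varCls}. After a further subsequence extraction, Lemma~\ref{convHS} yields a limiting component $\Sigma_0 \subset \partial\Omega$ that is a smooth closed stable $\hat H$-hypersurface, sits inside $\B_{[t_*, T]}$ (hence is disjoint from $\zerob$), realizes a $t$-level set by Proposition~\ref{levelsetProp}, and still has nonzero-degree projection onto $\S^2$ thanks to graph convergence. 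By Lemma~\ref{paddingLemma}, $\Sigma_0$ is isolated from the remaining components of $\partial\Omega$.

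With $(\Omega, \Sigma_0)$ in hand, I would simply verify the clauses of Assumption~\ref{relativeCCassum} one by one: smoothness and embeddedness of $\partial\Omega \setdiff \{\zerob\}$ from Lemma~\ref{convCC}(1), the minimizing property from Lemma~\ref{convCC}(2), and the stability, disjointness from $\zerob$, and nonzero-degree conditions on $\Sigma_0$ from Lemma~\ref{convHS}. At this point of the paper, the proof is essentially a short act of bookkeeping, so I do not anticipate any substantive obstacle here; all the real difficulty has already been absorbed into Proposition~\ref{noCrossing} (which rules out collapse onto the origin via a Gauss--Bonnet comparison) and into the convergence Lemmas~\ref{convCC}--\ref{convHS} (which are the technical heart, especially the cut-and-paste argument that transfers the minimizing property from $\Omega_{\epsilon_i}$ to the limit $\Omega$).
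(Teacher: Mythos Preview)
Your proposal is correct and follows the same route as the paper: the authors simply state that Proposition~\ref{initProp} is obtained by combining Lemmas~\ref{convCC} and~\ref{convHS}, and your more detailed bookkeeping (invoking Lemma~\ref{MBepsilonExistenceLemma}, Proposition~\ref{noCrossing}, and Lemma~\ref{paddingLemma} along the way) spells out exactly how those two lemmas assemble into a verification of Assumption~\ref{relativeCCassum}.
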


Theorem~\ref{capRigThm} follows directly from Propositions~\ref{rigidityProp} and \ref{initProp}.


\section{Generalizations}\label{Sec_gen}

In this section we discuss a few variants of Theorem \ref{capRigThm}. 

To begin with, we consider a version of Gromov's rigidity theorem for the 
doubly punctured sphere (see \cite[Sections 5.5 and 5.7]{Gromov21Four}), restricted to the $3$-dimensional case.
 
\begin{theorem}\label{capRigThm2}
 Let $(\S^3\setdiff\{O,O'\},\hat g)$ be the standard $3$-sphere with a pair of antipodal points removed, and let $h\ge 1$ be a smooth function on $\S^3\setdiff\{O,O'\}$.
 Suppose that $g$ is another Riemannian metric on $\S^3\setdiff\{O,O'\}$ satisfying
\begin{equation}\label{noncompactCond_txt}
 	 g\ge h^4 \hat g\quad \mbox{and}\quad R_g\ge h^{-2}  R_{\hat g}.
\end{equation}
 Then $h\equiv 1$, and $g = \hat g$.
\end{theorem}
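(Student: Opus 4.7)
The plan is to adapt the scheme of Sections~\ref{rigSection} and~\ref{Sec_initMB} to the conformal setting of Theorem~\ref{capRigThm2}. Two modifications are required: first, the auxiliary functions $\mu_\alpha$ must be obtained by longitudinal dilations rather than by perturbations of the form \eqref{muepsilon}, since the $g$-mean curvatures of small geodesic spheres about $O$ and $O'$ are uncontrolled; second, the scalar-curvature/Gauss--Bonnet estimate must absorb the conformal factor $h$.

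For the first point, fix small $\alpha>0$ and consider the Riemannian band
\[
M_\alpha := \{(t,\theta)\in\S^3\setdiff\{O,O'\} : \alpha \le t\le \pi - \alpha\},
\]
with $\partial_- = S_\alpha$, $\partial_+ = S_{\pi-\alpha}$. Let $\phi_\alpha: [\alpha,\pi-\alpha]\to[0,\pi]$ be an order-preserving diffeomorphism (a ``longitudinal dilation'') and define
\[
\mu_\alpha(t) := \hat H(\phi_\alpha(t)) = 2\cot(\phi_\alpha(t)).
\]
Then $\mu_\alpha\to\pm\infty$ at $\partial_\mp$, so the barrier condition is automatic. By Lemma~\ref{mubbExReg} there exists a $\mu_\alpha$-bubble $\Omega_\alpha\subset M_\alpha$; as in Lemma~\ref{MBepsilonExistenceLemma}, $\partial\Omega_\alpha\cap\mathring M_\alpha$ has a connected component $\Sigma^\alpha_0$ whose projection onto $\S^2$ has nonzero degree.

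For the second point, take $\mu = \hat H$. Using $g\ge h^4\hat g$ (which implies $|d\hat H|_g\le h^{-2}|d\hat H|_{\hat g}$) and $R_g\ge h^{-2}R_{\hat g} = 6h^{-2}$, a direct computation gives
\[
R^{\hat H}_+ \;\ge\; \frac{2}{h^2\sin^2 t} + 6\cot^2 t\,(1 - h^{-2}) \;\ge\; \frac{2}{h^2\sin^2 t}
\]
since $h\ge 1$. Combined with $d\sigma_g\ge h^4 d\sigma_{\hat g}$ and Lemma~\ref{areaCompLemma},
\[
\int_{\Sigma^\alpha_0}R^{\hat H}_+\,d\sigma_g \;\ge\; \int_{\Sigma^\alpha_0}\frac{2h^2}{\sin^2 t}\,d\sigma_{\hat g} \;\ge\; 2\int_{\Sigma^\alpha_0}|\Phi^*d\sigma_{\S^2}| \;\ge\; 8\pi.
\]
Stability and Gauss--Bonnet (once $\Sigma^\alpha_0\cong\S^2$, cf.~Lemma~\ref{sphereLemma}) give the reverse bound $\int R_{\Sigma^\alpha_0}d\sigma_g = 8\pi \ge \int R^{\hat H}_+\,d\sigma_g$, forcing every inequality above to be sharp; the limiting hypersurface $\Sigma_0$ is then some $t$-level set $S_\tau$ with $h\equiv 1$ and $g = \hat g$ along $\Sigma_0$, yielding the analog of Proposition~\ref{levelsetProp}.

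The rest of the argument follows the templates of Sections~\ref{sec_noCross}--\ref{limitingSec} (a no-crossing property paired with uniform second-fundamental-form bounds via Lemma~\ref{sffLemma} allows passage to an $\hat H$-minimizer $\Omega\subset\S^3\setdiff\{O,O'\}$) and of Section~\ref{rigSection} (the foliation Lemma~\ref{foliationLemma} together with the open-closed argument of Proposition~\ref{rigidityProp} extends $h\equiv 1$ and $g=\hat g$ to all of $\S^3\setdiff\{O,O'\}$). The principal technical difficulty will be the no-crossing and local-rigidity steps: since $\mu_\alpha$ is a reparametrization rather than a small additive perturbation of $\hat H$, both the slope estimate (Lemma~\ref{slopeLemma}) and the crossing-contradiction perturbation $\tilde\mu = \hat H\pm\epsilon\sin^{-3}t$ from the proof of Lemma~\ref{localRigLemma} must be replaced by longitudinal-dilation variants whose associated $Z_{\tilde\mu}$ keeps the required sign and whose interaction with the conformal factor $h$ preserves the strict inequality in the Gauss--Bonnet chain above.
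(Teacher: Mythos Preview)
Your overall strategy matches the paper's: longitudinal dilations $\mu_\alpha=\hat H\circ\phi_\alpha$ that blow up at both ends of the band, existence of $\mu_\alpha$-bubbles, a no-crossing argument, passage to a limiting stable $\hat H$-hypersurface, and then the foliation/open-closed argument. The paper implements this with the explicit linear dilation $t_\alpha=(\pi/2\alpha)t$ on $\Bbb_{[-\alpha,\alpha]}$, but this is just a concrete instance of your $\phi_\alpha$.

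There is, however, a genuine logical gap in your ``second point''. You compute a lower bound for $R^{\hat H}_+$ and then invoke stability of $\Sigma^\alpha_0$ to obtain $8\pi=\int_{\Sigma^\alpha_0}R_{\Sigma^\alpha_0}\,d\sigma_g\ge\int_{\Sigma^\alpha_0}R^{\hat H}_+\,d\sigma_g$. But $\Sigma^\alpha_0$ is a stable $\mu_\alpha$-hypersurface, not a stable $\hat H$-hypersurface, so stability only yields $\int R_{\Sigma^\alpha_0}\ge\int R^{\mu_\alpha}_+$. Your chain of inequalities, as written, does not close. This is not merely cosmetic: the no-crossing step, which must be established \emph{before} any limit is taken, requires exactly this Gauss--Bonnet contradiction on $\Sigma^\alpha_0$ itself, and for that you must control $R^{\mu_\alpha}_+$, not $R^{\hat H}_+$.

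The paper handles this by computing (using $g\ge h^4\hat g$, $R_g\ge h^{-2}R_{\hat g}$, and $h\ge1$)
\[
R^{\mu_\alpha}_+\ \ge\ \frac{1}{h^2}\Bigl(\frac{2}{\varphi^2}+Z_{\mu_\alpha}\Bigr),\qquad Z_{\mu_\alpha}:=\tfrac{3}{2}(\mu_\alpha^2-\hat H^2)+2(\partial_t\mu_\alpha-\partial_t\hat H),
\]
and then showing two facts about $Z_{\mu_\alpha}$: it is strictly positive outside a fixed compact $t$-interval independent of $\alpha$ (forcing $\Sigma^\alpha_0$ to meet that interval, the analogue of Lemma~\ref{MeetingCptSet}), and $\varphi^2 Z_{\mu_\alpha}\ge C(\alpha)$ everywhere with $C(\alpha)\to0$ as $\alpha\to\pi/2$ (so the defect in the Gauss--Bonnet chain vanishes in the limit, the analogue of~\eqref{ZepsilonLB}). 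These two estimates are what drive the no-crossing argument; your proposal acknowledges that ``$Z_{\tilde\mu}$ must keep the required sign'' but does not supply them, and in fact your displayed Gauss--Bonnet chain bypasses $Z_{\mu_\alpha}$ entirely. Once you replace $R^{\hat H}_+$ by $R^{\mu_\alpha}_+$ and establish these two estimates on $Z_{\mu_\alpha}$, the rest of your outline goes through as in the paper.
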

\begin{proof}
For convenience, let us use slightly different notations than those introduced at the beginning of Section~\ref{rigSection} by representing $\S^3\setdiff\{O,O'\}$ as $\Bbb_{(-\pi/2,\pi/2)}\cong \S^2\times(-\pi/2,\pi/2)$ with $t$ 
being the coordinate on $(-\pi/2,\pi/2)$. Under this representation we have $\varphi(t) = \cos t$ and
\begin{equation}
	\hat H(t) = - 2\tan t
\end{equation} 
instead of \eqref{HhatDef}.
Now for $\alpha\in (0,\pi/2)$ sufficiently close to $\pi/2$, 
consider the Riemannian band $\Bscr_{\alpha}:=(\Bbb_{[-\alpha,\alpha]},g; S_{-\alpha}, S_\alpha)$ and the functions 
\begin{equation}
	t_{\alpha}=\frac{t}{\alpha}\cdot \frac{\pi}{2}
	\qquad\mbox{and}\qquad 
	\mu_{\alpha}= -2\tan t_\alpha\text{ on } \Bbb_{(-\alpha,\alpha)},
\end{equation}
and consider the problem of finding ${\mu}_{\alpha}$-bubbles in ${\Bscr}_{\alpha}$. Since $\mu_{\alpha}\rightarrow \pm\infty$ as $t\rightarrow \mp\alpha$, ${\mu}_{\alpha}$ satisfies the barrier condition; thus, there exists a $\mu_\alpha$-bubble 
$\Omega_{\alpha}\subset \Bscr_\alpha$, which satisfies analogous properties as described in
Lemma~\ref{MBepsilonExistenceLemma}. 
Let $\Sigma^\alpha_0$ be a connected component of 
$\partial\Omega_{\alpha}\setdiff S_{-\alpha}$ whose projection to $\S^2$ has nonzero degree;
$\Sigma^\alpha_0$ is a stable ${\mu}_{\alpha}$-hypersurface, on which
\begin{equation}
\begin{alignedat}{1}
R_+^{\mu_\alpha} &=R_g + \frac{3}{2}({\mu}_{\alpha})^2 - 2|\ed{\mu}_{\alpha}|_g\\
&\geq \frac{1}{h^2}\left(\frac{R_{\S^2}}{\varphi^2}-\frac{3}{2}\hat{H}^2+2|\ed\hat{H}|_{\hat{g}}\right) +\frac{3}{2}(\mu_{\alpha})^2 - \frac{2}{h^2}|\ed\mu_{\alpha}|_{\hat{g}}\\
&\geq \frac{1}{h^2}\left(\frac{R_{\S^2}}{\varphi^2}+ Z_{\mu_\alpha}\right)
\end{alignedat}
\end{equation}
where the last step follows from the assumption $h\ge 1$ and the definition
\[
	Z_{\mu_\alpha}:= \frac{3}{2}(\mu_{\alpha}^2-\hat{H}^2)+2(\partial_{t}\mu_{\alpha}-\partial_{t}\hat{H}). 
\]
By a careful estimate of $Z_{\mu_\alpha}$ using the mean value theorem, it is not difficult to show that there exists a constant $t_{c}>0$ such that 
\begin{equation}\label{twoestimates}
Z_{\mu_\alpha}>0\quad \text{for } t\in(-\alpha,-t_{c})\cup(t_{c},\alpha), 
\quad\text{and}
\quad \varphi^{2}Z_{\mu_\alpha}\geq C(\alpha)\quad\text{for } t\in (-\alpha,\alpha).
\end{equation}
where $C(\alpha)<0$ is a constant depending only on $\alpha$ and satisfies $C(\alpha)\rightarrow 0$ as $\alpha\rightarrow \pi/2$. 
Similar to the proof of Proposition \ref{noCrossing}, here \eqref{twoestimates} implies that $\Sigma^\alpha_0$
is contained in a fixed compact domain in $\Bbb_{(-\pi/2,\pi/2)}$ that is independent
of the choice of $\alpha$. Thus, as $\alpha\rightarrow \pi/2$, such $\Sigma^\alpha_0$'s
subconverge to a stable $\hat H$-hypersurface, and an analogue of Proposition~\ref{initProp} can be obtained.
An analogue of Proposition~\ref{levelsetProp} and a foliation argument yield that $h \equiv 1$ and $g = \hat g$.
\end{proof}

\begin{remark}\label{confRmk}
 The assumption $h\ge 1$ is important for Theorem \ref{capRigThm2} to hold.  Without this assumption, one may let  $g=\cos^{2}{t}(\ed t^{2}+g_{\S^2})\ne\hat g$ on $\S^{3}\setdiff\{O,O'\}\cong \S^2\times(-\pi/2,\pi/2)$ and take $h=(\cos{t})^{1/2}$, and it
 is easy to check that \eqref{noncompactCond_txt} is satisfied---in particular, $R_g = (2+4\cos^2t)(\cos t)^{-4}$ and $h^{-2}R_{\hat g} = 6(\cos t)^{-1}$, so $R_{g}\geq h^{-2}R_{\hat g}$. 
 \end{remark}

Theorem~\ref{capRigThm} has Euclidean and hyperbolic analogues.
Putting together, let us take
\begin{equation}\label{k-capMetricStd}
	\hat g_{\kappa} =  \varphi_{\kappa}(t)^2 g_{\S^2} + \ed t^2 \qquad \text{on } \B_{T}
\end{equation}
where 
\[
	\varphi_{\kappa}(t) =
	\left\{
		\begin{alignedat}{2}
			& \quad\sin \sqrt{\kappa}t&&,\quad \kappa>0, \\
			 &\quad\qquad t &&,\quad \kappa = 0,\\
			 & \sinh \sqrt{-\kappa}t &&, \quad\kappa<0,
		\end{alignedat}
	\right.
\]
and $T\in (0,\pi/\sqrt{\kappa})$ if $\kappa >0$; $T>0$ if $\kappa\le 0$.
In particular, $\sec(\hat{g}_{\kappa}) = \kappa$, and $\hat{H}_{\kappa}(t)={2\varphi_{\kappa}'(t)}/{\varphi_{\kappa}(t)}$.

\begin{theorem}\label{capRigThm1_conf}
Let $\B_T, \hat g_\kappa$ be as above.
Let $g$ be a Riemannian metric on $\B_T$ satisfying
\[
	g\geq h^{4}\hat{g}_\kappa,\quad 
	R_{g}\geq h^{-2}R_{\hat{g}_\kappa},\quad 
	H_{\partial \B_T}\geq \hat{H}_{\kappa}(T),
\]
for some smooth function $h \geq 1$ defined on $\B_T$. 
Then $h\equiv1$,  and $g = \hat g_{\kappa}$.
\end{theorem}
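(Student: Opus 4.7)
My plan is to mirror the strategy of Sections~\ref{rigSection}--\ref{Sec_initMB} used for Theorem~\ref{capRigThm}, grafting in the conformal modifications from the proof of Theorem~\ref{capRigThm2} to accommodate $h \geq 1$ and variable sectional curvature $\kappa$. Throughout, write $\hat g_\kappa = \varphi_\kappa(t)^2 g_{\S^2} + \ed t^2$, $\hat H_\kappa(t) = 2\varphi_\kappa'/\varphi_\kappa$, and, for a smooth decreasing $\mu(t)$,
\begin{equation*}
	Z_{\mu,\kappa}(t) := \tfrac{3}{2}\bigl(\mu^2 - \hat H_\kappa^2\bigr) + 2\bigl(\partial_t\mu - \partial_t\hat H_\kappa\bigr),
\end{equation*}
so that $Z_{\hat H_\kappa,\kappa} \equiv 0$ (the analog of \eqref{Zdefn}).

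The first step is to establish the conformal analogs of Lemmas~\ref{RplusLemma} and \ref{areaCompLemma}. As in the proof of Theorem~\ref{capRigThm2}, $g \geq h^4\hat g_\kappa$ forces $|\ed\mu|_g \leq h^{-2}|\ed\mu|_{\hat g_\kappa}$; combined with $R_g \geq h^{-2}R_{\hat g_\kappa}$ and \eqref{Rhat} applied to $\hat g_\kappa$, this yields
\begin{equation*}
	R_+^\mu \geq \frac{1}{h^2}\left(\frac{R_{\S^2}}{\varphi_\kappa^2} + Z_{\mu,\kappa}\right).
\end{equation*}
Moreover, $g \geq h^4\hat g_\kappa$ implies $d\sigma_g \geq h^4\, d\sigma_{\hat g_\kappa}$ on any two-sided surface (on each tangent plane, $\det g \geq h^8 \det \hat g_\kappa$), while the obvious analog of Lemma~\ref{areaCompLemma} gives $d\sigma_{\hat g_\kappa} \geq \varphi_\kappa^2\,|\Phi^* d\sigma_{\S^2}|$.

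With these, I would prove the conformal analog of Proposition~\ref{levelsetProp}: any stable $\hat H_\kappa$-hypersurface $\Sigma_0 \subset \B_T \setdiff \{\zerob\}$ whose projection $\Phi$ to $\S^2$ has nonzero degree must coincide with some level set $S_\tau$, and \emph{on $\Sigma_0$} one has $h \equiv 1$, $g = \hat g_\kappa$, $\nu = \partial_t$. Stability together with Gauss--Bonnet, combined with the above estimates, yields
\begin{equation*}
	8\pi \;=\; \int_{\Sigma_0} R_{\Sigma_0}\, d\sigma_g \;\geq\; \int_{\Sigma_0} R_+^{\hat H_\kappa}\, d\sigma_g \;\geq\; 2\int_{\Sigma_0} h^2\,|\Phi^*d\sigma_{\S^2}| \;\geq\; 8\pi\,|\deg\Phi| \;\geq\; 8\pi,
\end{equation*}
forcing saturation throughout. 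The new feature compared with Proposition~\ref{levelsetProp} is that the saturation of the factor $h^2 \geq 1$ forces $h \equiv 1$ on $\Sigma_0$; the remaining saturations give $\Sigma_0 = S_\tau$, $g = \hat g_\kappa$ on $\Sigma_0$, umbilicity, and $\nu = \partial_t$, just as in Lemma~\ref{levelSetLemma}.

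Finally, to construct an initial minimizer and to propagate the rigidity, I would take $\mu_\epsilon(t) = \hat H_\kappa(t) + \epsilon\beta_\kappa(t)$ with a decreasing $\beta_\kappa$ chosen so that $\mu_\epsilon$ satisfies the barrier condition on $\B_T \setdiff B(r_\epsilon,g)$; the expansions \eqref{MCasymp1}--\eqref{MCasymp2} still apply because $g \geq h^4 \hat g_\kappa \geq \hat g_\kappa$, and $\beta_\kappa(T) < 0$ ensures $\mu_\epsilon(T) < \hat H_\kappa(T) \leq H_g|_{\partial\B_T}$. The no-crossing property (Proposition~\ref{noCrossing}) and the subconvergence lemmas (\ref{convCC}, \ref{convHS}) carry over virtually unchanged, since they rely only on the warped-product form of $\hat g_\kappa$ and on the uniform second-fundamental-form bounds from Lemma~\ref{sffLemma}. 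For the foliation step, the perturbation $\tilde\mu(t) = \hat H_\kappa(t) \pm \epsilon/\varphi_\kappa(t)^3$ gives, by direct computation, $Z_{\tilde\mu,\kappa} = 3\epsilon^2/(2\varphi_\kappa^6) > 0$ uniformly in $\kappa$, which contradicts the level-set proposition exactly as in Lemma~\ref{localRigLemma}; the open--closed extension is then identical to Proposition~\ref{rigidityProp}. The main obstacle I expect is careful bookkeeping of the conformal factor $h$ in the saturation arguments---in particular, verifying that $h \equiv 1$ propagates from the initial leaf to a neighborhood via the foliation (each shifted $\Omega_s$ must still be a minimizer in the new setting, so the level-set proposition reapplies to each leaf $\Sigma_s$); the $\kappa$-dependence, by contrast, enters only through the explicit quantity $\varphi_\kappa$ and is essentially mechanical once the three cases $\kappa > 0, = 0, < 0$ are handled in parallel.
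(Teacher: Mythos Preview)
Your proposal is correct and matches the paper's own approach: the paper gives only a brief sketch, noting that the key ingredient is the log-concavity of $\varphi_\kappa$ (i.e., that $\hat H_\kappa$ is strictly decreasing) and that ``the proof proceeds as that of either Theorem~\ref{capRigThm} or Theorem~\ref{capRigThm2}'', leaving the details to the reader. Your plan of running the $\mu_\epsilon$-perturbation/no-crossing/foliation machinery of Sections~\ref{rigSection}--\ref{Sec_initMB} while inserting the conformal estimate $R_+^\mu \geq h^{-2}\bigl(2/\varphi_\kappa^2 + Z_{\mu,\kappa}\bigr)$ and the area comparison $d\sigma_g \geq h^4\,d\sigma_{\hat g_\kappa}$ from the proof of Theorem~\ref{capRigThm2} is exactly that synthesis, and your computation $Z_{\tilde\mu,\kappa} = 3\epsilon^2/(2\varphi_\kappa^6)$ for the foliation step is the correct $\kappa$-uniform analog of \eqref{mutilde}.
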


As pointed out by Gromov in \cite[Section 5.5]{Gromov21Four}, a key fact that allows the different cases (corresponding to different choices of $\kappa$) in 
Theorem~\ref{capRigThm1_conf} to be treated similarly is that the function $\varphi_\kappa(t)$
is ``log-concave"---in other words, $\hat H_\kappa(t)$ is strictly decreasing in $t$ (cf. Lemma~\ref{RplusLemma} and Proposition~\ref{levelsetProp}). Having this in mind, the proof proceeds as that of
either Theorem~\ref{capRigThm} or Theorem~\ref{capRigThm2}, and we leave the details to the interested reader.

\begin{remark}
When $\kappa\leq 0$ and $T=+\infty$, whether Theorem \ref{capRigThm1_conf} holds remains unknown to us.
\end{remark}



\bibliographystyle{alpha}

\end{document}